\documentclass[12pt]{amsart}

\usepackage{amsfonts,amsmath,amsthm,amssymb,latexsym,amsthm,newlfont,enumerate,color,tikz-cd}

\oddsidemargin 1cm
\evensidemargin 1cm
\textwidth 15cm

\newif\ifslide

\theoremstyle{plain}
\newtheorem{theorem}{Theorem}[section]

\newtheorem{corollary}[theorem]{Corollary}
\newtheorem{lemma}[theorem]{Lemma}

\newtheorem{proposition}[theorem]{Proposition}
\newtheorem{definition-lemma}[theorem]{Definition-Lemma}

\newtheorem{red-question}[theorem]{\textcolor{red}{Question}}

\newtheorem{conjecture}[theorem]{Conjecture}

\theoremstyle{definition}
\newtheorem{definition}[theorem]{Definition}
\newtheorem{remark}[theorem]{Remark}

\def\ideal#1.{I_{#1}}
\def\ring#1.{\mathcal {O}_{#1}}



\def\RR{\mathbb R}

\def\QQ{\mathbb Q}

\def\fring#1.{\hat{\mathcal {O}}_{#1}}
\def\proj#1.{\mathbb {P}(#1)}
\def\pr #1.{\mathbb {P}^{#1}}
\def\dpr #1.{\hat{\mathbb {P}}^{#1}}
\def\af #1.{\mathbb A^{#1}}
\def\Hz #1.{\mathbb F_{#1}}
\def\Hbz #1.{\overline{\mathbb F}_{#1}}
\def\fb#1.{\underset #1 {\times}}
\def\rest#1.{\underset {\ \ring #1.} \to \otimes}
\def\au#1.{\operatorname {Aut}\,(#1)}
\def\deg#1.{\operatorname {deg } (#1)}

\def\pic#1.{\operatorname {Pic}\,(#1)}
\def\pico#1.{\operatorname{Pic}^0(#1)}
\def\picg#1.{\operatorname {Pic}^G(#1)}
\def\ner#1.{NS (#1)}
\def\rdown#1.{\llcorner#1\lrcorner}
\def\rfdown#1.{\lfloor{#1}\rfloor}
\def\rup#1.{\ulcorner{#1}\urcorner}
\def\rcup#1.{\lceil{#1}\rceil}

\def\n1#1.{\operatorname {N_1}(#1)}  
\def\cn1#1.{\overline{\operatorname {N^1}(#1)}} 
\def\cone#1.{\operatorname {NE}(#1)}     
\def\ccone#1.{\overline{\operatorname {NE}}(#1)}
\def\none#1.{\operatorname {NF}(#1)}
\def\cnone#1.{\overline{\operatorname {NF}}(#1)}
\def\mone#1.{\operatorname {NM}(#1)} 
\def\cmone#1.{\overline{\operatorname {NM}}(#1)}

\def\coef#1.{\frac{(#1-1)}{#1}}
\def\vit#1.{D_{\langle #1 \rangle}}
\def\mm#1.{\overline {M}_{0,#1}}
\def\H1#1.{H^1(#1,{\ring #1.})}
\def\ac#1.{\overline {\mathbb F}_{#1}}

\def\adj#1.{\frac {#1-1}{#1}}
\def\spn#1.{\overline{#1}}
\def\pek#1.#2.{\Cal P^{#1}(#2)}
\def\plk#1.#2.{\Cal P^{\leq #1}(#2)}
\def\ev#1.{\operatorname{ev_{#1}}}
\def\ilist#1.{{#1}_1,{#1}_2,\dots}
\def\bminv#1.{(\nu_1,s_1;\nu_2,s_2;\dots ;\nu_{#1},s_{#1};\nu_{r+1})}
\def\zinv#1.{(\nu_1,s_1;\nu_2,s_2;\dots ;\nu_{#1},s_{#1};0)}
\def\iinv#1.{(\nu_1,s_1;\nu_2,s_2;\dots ;\nu_{#1},s_{#1};\infty)}

\def\scr #1.{\mathcal #1}


\def\llist#1.#2.{{#1}_1,{#1}_2,\dots,{#1}_{#2}}
\def\ulist#1.#2.{{#1}^1,{#1}^2,\dots,{#1}^{#2}}
\def\lomitlist#1.#2.{{#1}_1,{#1}_2,\dots,\hat {{#1}_i}, \dots, {#1}_{#2}}
\def\lomitlistz#1.#2.{{#1}_0,{#1}_1,\dots,\hat {{#1}_i}, \dots, {#1}_{#2}}
\def\loc#1.#2.{\Cal O_{#1,#2}}
\def\fderiv#1.#2.{\frac {\partial #1}{\partial #2}}
\def\deriv#1.#2.{\frac {d #1}{d #2}}

\def\map#1.#2.{#1 \longrightarrow #2}
\def\rmap#1.#2.{#1 \dasharrow #2}
\def\emb#1.#2.{#1 \hookrightarrow #2}
\def\non#1.#2.{\text {Spec }#1[\epsilon]/(\epsilon)^{#2}}
\def\Hi#1.#2.{\text {Hilb}^{#1}(#2)}
\def\sym#1.#2.{\operatorname {Sym}^{#1}(#2)}
\def\Hb#1.#2.{\text {Hilb}_{#1}(#2)}
\def\Hm#1.#2.{\Hom_{#1}(#2)}
\def\prd#1.#2.{{#1}_1\cdot {#1}_2\cdots {#1}_{#2}}
\def\Bl #1.#2.{\operatorname {Bl}_{#1}#2}
\def\pl #1.#2.{#1^{\otimes #2}}
\def\mgn#1.#2.{\overline {M}_{#1,#2}}
\def\ialist#1.#2.{{#1}_1 #2 {#1}_2, #2\dots}
\def\pair#1.#2.{\langle #1, #2\rangle}
\def\vandermonde#1.#2.{\left|
\begin{matrix}
1 & 1 & 1 & \dots & 1\\
{#1}_1 & {#1}_2 & {#1}_3 & \dots & {#1}_{#2}\\
{#1}_1^2 & {#1}_2^2 & {#1}_3^2 & \dots & {#1}_{#2}^2\\
\vdots & \vdots & \vdots & \ddots & \vdots\\
{#1}_1^{#2-1} & {#1}_2^{#2-1} & {#1}_2^{#2-1} & \dots & {#1}_{#2}^{#2-1}\\
\end{matrix}
\right|
}
\def\vandermondet#1.#2.{\left|
\begin{matrix}
1 & {#1}_1   & {#1}_1^2 & \dots & {#1}_1^{#2-1}\\
1 & {#1}_2   & {#1}_2^2 & \dots & {#1}_2^{#2-1}\\
1 & {#1}_3   & {#1}_3^2 & \dots & {#1}_3^{#2-1}\\
\vdots & \vdots & \vdots & \ddots & \vdots\\
1 & {#1}_{#2}& {#1}_{#2}^2 & \dots & {#1}_{#2}^{#2-1}\\
\end{matrix}
\right|
}
\def\gr#1.#2.{\mathbb{G}(#1,#2)}


\def\alist#1.#2.#3.{{#1}_1 #2 {#1}_2 #2\dots #2 {#1}_{#3}}
\def\zlist#1.#2.#3.{#1_0 #2 #1_1 #2\dots #2 #1_{#3}}
\def\lomitlist30#1.#2.#3.{{#1}_0,{#1}_1 #2 \dots #2\hat {{#1}_i} #2\dots #2 {#1}_{#3}}
\def\lmap#1.#2.#3.{#1 \overset{#2}{\longrightarrow} #3}
\def\mes#1.#2.#3.{#1 \longrightarrow #2 \longrightarrow #3}
\def\ses#1.#2.#3.{0\longrightarrow #1 \longrightarrow #2 \longrightarrow #3 \longrightarrow 0}
\def\les#1.#2.#3.{0\longrightarrow #1 \longrightarrow #2 \longrightarrow #3}
\def\res#1.#2.#3.{#1 \longrightarrow #2 \longrightarrow #3\longrightarrow 0}
\def\Hi#1.#2.#3.{\text {Hilb}^{#1}_{#2}(#3)}
\def\ten#1.#2.#3.{#1\underset {#2}{\otimes} #3}
\def\lomitlist30#1.#2.#3.{{#1}_0 #2 {#1}_1 #2 \dots #2 \hat {{#1}_i} #2 \dots #2 {#1}_{#3}}
\def\mderiv#1.#2.#3.{\frac {d^{#3} #1}{d #2^{#3}}}


\def\Hom{\operatorname{Hom}}

\def\Proj{\operatorname{Proj}}

\def\dim{\operatorname{dim}}

\def\deg{\operatorname{deg}}


\def\det{\operatorname{det}}

\def\ker{\operatorname{Ker}}

\def\rk{\operatorname{rk}}
\def\im{\operatorname{Im}}

\def\rest{\operatorname{res}}
\def\Sym{\operatorname{Sym}}


 

\def\rar{\rightarrow}
\def\drar{\dashrightarrow}

\def\e{\Cal E}

\def\e1{E_1}
\def\e2{E_2}

\def\bM#1.{\mathbf{M}_{#1}}

\usepackage{hyperref,amscd,mathtools}

\DeclareOldFontCommand{\rm}{\normalfont\rmfamily}{\mathrm}

\newcommand{\marg}[1]{\normalsize{{\color{red}\footnote{{\color{blue}#1}}}{\marginpar[{\color{red}\hfill\tiny\thefootnote$\rightarrow$}]{{\color{red}$\leftarrow$\tiny\thefootnote}}}}}

\newcommand{\Cal}[1]{\marg{(Calum) #1}}

\def\K#1.{K_{#1}}
\def\subs#1.{_{#1}}					
\def\sups#1.{^{#1}}	

\def \QP {\mathbb Q_{>0}}

\title{On semi-ampleness of the moduli part}

\subjclass[2020]{14E30, 37F75}
\thanks{SF and CS were partially supported by ERC starting grant \#804334.  CS was partially supported by EPSRC}


\author[S.~Filipazzi]{Stefano Filipazzi}
\email{stefano.filipazzi@duke.edu}
\address{
Department of Mathematics, Duke University, 120 Science Drive, 117 Physics Building, Campus Box 90320, Durham, NC 27708-0320, USA
}

 \author[C. ~Spicer]{Calum Spicer}
\email{calum.spicer@kcl.ac.uk}
\address{Department of Mathematics, King's College London, Strand,
London WC2R 2LS, UK}

\setcounter{tocdepth}{1}

\begin{document}
\begin{abstract}We discuss a conjecture of Shokurov on the semi-ampleness of 
the moduli part of a general fibration.
\keywords{Fibration, canonical bundle formula}
\end{abstract}

\maketitle
\tableofcontents


\section{Introduction}

In \cite{shokurov2021log}, 
the notion of the moduli and discriminant parts
of a generically log canonical (GLC) fibration $(X/Z, B)$ is defined.
By a GLC fibration we mean the data of a log pair $(X, B)$
and a contraction $f\colon X \rightarrow Z$, i.e., a projective
 surjective morphism with connected fibres,
between normal quasi-projective varieties, such that $(X, B)$ is log canonical above the generic point of $Z$.
We may define a discriminant divisor $B_Z$ on $Z$ which (roughly) measures the singularities of the fibres of $(X, B)$
over $Z$, and the moduli part is then defined as $M_X \coloneqq (K_X+B) - f^*(K_Z+B_Z)$, we refer to \S~\ref{s_prop_*} for precise definitions.
These definitions are straightforward extensions
of the corresponding notions for LC trivial fibrations. 

For GLC fibrations, it is known that, after passing to a sufficiently high model, the moduli part
of the fibration is compatible with pull-backs.  Moreover, in the case of an LC trivial fibration,  
it is expected that the moduli part becomes semi-ample on such a model.
This conjecture is known as the b-semi-ampleness conjecture, see \cite[Conjecture 7.13.1]{ps09}.
Analogously, one might hope that a similar statement holds for a general GLC fibration, namely, 
after replacing $(X/Z, B)$ by a birationally equivalent pair $(X'/Z', B')$,
the moduli part $M_{X'}$ of $(X'/Z', B')$ becomes semi-ample.

However, as observed by Keel, see \cite[\S~5.4]{ACSS}, the example of $f\colon (C \times C, \Delta) \rightarrow C$
where $C$ is a curve of genus $\geq 2$ and $\Delta$ is the diagonal shows that such a statement does not hold.
In light of this and other examples, Shokurov conjectured (\cite[Conjecture 1]{shokurov2021log})
that up to replacing $(X/Z, B)$ by an appropriate model,
$M_X$ becomes semi-ample after any small perturbation 
by an ample divisor coming from the moduli space of fibres.
We consider here a variant on Shokurov's conjecture, where, by analogy with the moduli space of stable pairs, we encode the notion of polarisation on a suitable moduli space of the fibres into the determinant of a twisted version of the Hodge bundle.

\begin{conjecture}
\label{conj_shokurov}
Suppose that $(X, B)$ is log canonical, $B \geq 0$, $(X/Z, B)$ has maximal moduli
(see \cite[Definition 2.20]{ACSS}),
	$M_X$ is $f$-nef and BP stable (see \cite[Definition 2.5]{ACSS}).
Then the following hold

	\begin{enumerate}
			\setcounter{enumi}{0}
	\item for all $m \gg 0$, $\det f_*\mathcal O_X(mM_X)$ is semi-ample;
	
	\item for all $m \gg 0$ and any  $\epsilon \in \QP$, 
$M_X+\epsilon f^*c_1( \det f_*\mathcal O_X(mM_X))$ is semi-ample;

\item $\kappa(M_X) \geq \kappa(X/Z, B)+{\rm var}(X/Z, B)$.
\end{enumerate}
\end{conjecture}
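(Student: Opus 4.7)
I would treat the two parts separately.

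For (1), the strategy is a Viehweg-style positivity argument. By BP stability, for $m$ sufficiently large and divisible the formation of $f_*\mathcal{O}(m M_X)$ commutes with sufficiently general base change, and its determinant $\det f_*\mathcal{O}(m M_X)$ descends, via the variation map $Z \dashrightarrow \mathcal{M}$ of $(X/Z, B)$, to a $\mathbb{Q}$-divisor class $A_{\mathcal{M}}$ on the image. Since $(X/Z, B)$ has maximal moduli the variation map is generically finite onto its image, so a Viehweg-type weak positivity result applied to the GLC moduli part yields that $A_{\mathcal{M}}$ is big (and ample on a suitable birational model of $\mathcal{M}$). Consequently $f^*\det f_*\mathcal{O}(m M_X)$ is big and nef on $X$, and combined with the $f$-nefness of $M_X$ the class $M_X + \epsilon f^*\det f_*\mathcal{O}(m M_X)$ is nef and big. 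The crux is then upgrading nefness to semi-ampleness. My plan here is to exploit that on a general fibre $F$ one has $M_X|_F = K_F + B_F$, which admits a good minimal model by log abundance for log canonical pairs together with BP stability; combined with semi-ampleness of $A_{\mathcal{M}}$, this should give global semi-ampleness via a relative base-point-free argument followed by descent through $\mathcal{M}$.

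For (2), I would use the classical Viehweg-style Iitaka-conjecture argument via direct images rather than try to deduce it purely formally from (1) (the naive limit $\epsilon \to 0$ only gives the wrong inequality $\kappa(M_X) \leq \kappa(L)$). Set $E_m \coloneqq f_*\mathcal{O}(m M_X)$, of rank growing like $m^{\kappa(X/Z, B)}$, and $A \coloneqq \det E_m$. The positivity input from the proof of (1) guarantees that $A$ has Iitaka dimension $\operatorname{var}(X/Z, B)$ on $Z$. Following Viehweg, one pulls $E_m^{\otimes r}$ up to the $r$-fold fibre product $f^{(r)}\colon X^{(r)} \to Z$, composes with the natural map to $f^{(r)}_*\mathcal{O}(r m M_{X^{(r)}})$, and takes sufficiently large symmetric/tensor powers; combined with weak positivity and bigness of $A$, this produces enough sections of a multiple of $M_X$ to yield
\[
\kappa(M_X) \ \geq \ \operatorname{var}(X/Z, B) + \kappa(M_X|_F) \ = \ \operatorname{var}(X/Z, B) + \kappa(X/Z, B),
\]
using $M_X|_F = K_F + B_F$ on a general fibre.

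The main obstacle is the nefness-to-semi-ampleness upgrade in (1): the nef-and-big property of the perturbation follows readily from positivity of the determinant, but passing to honest semi-ampleness requires a genuine abundance-type input on the (only log canonical) general fibres, coupled with a semi-ample descent mechanism on $\mathcal{M}$, and this is the genuinely hard step.
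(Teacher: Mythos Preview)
First, note that this statement is labeled a \emph{conjecture} in the paper, and the paper does not prove it in full generality. It establishes item (1) only conditionally on the b-semi-ampleness conjecture for LC-trivial fibrations (Theorem~\ref{thm_*}), and item (2) only when $\dim Z = 1$ (Theorem~\ref{thm_curve_base}). Your proposal reads as an attempt at an unconditional proof, which goes beyond what the paper claims.

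For item (1), your argument has a genuine gap. You assert that $M_X + \epsilon f^*\lambda_m$ is nef and \emph{big}, but this fails whenever $M_X$ is not $f$-big, i.e., when $\kappa(X/Z,B) < \dim X - \dim Z$: the pullback $f^*\lambda_m$ is never big on $X$ since it comes from a lower-dimensional base, and $M_X$ restricted to a general fibre has Iitaka dimension only $\kappa(X/Z,B)$, so the sum cannot be big. Your ``nef and big $\Rightarrow$ semi-ample via abundance'' step therefore has no purchase in the intermediate case. This is exactly where the paper's route diverges: it passes to a Property~$(*)$ model, then to a locally stable family via a finite cover (Proposition~\ref{prop_equi*_cover}), takes the relative Iitaka fibration $X \to Y$ over $Z$, and uses the b-semi-ampleness conjecture to turn $(Y,B_Y,\bM.)$ into an honest stable family $(Y,\Delta_Y)\to Z$ to which the Kov\'acs--Patakfalvi ampleness results apply (Lemma~\ref{lem_stable_max}, Proposition~\ref{semistable_iitaka}, Theorem~\ref{thm_locally_stable_semi-ample}). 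The b-semi-ampleness input is precisely what bridges the gap you flag as ``the genuinely hard step''; without it the paper makes no claim.

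For item (2), your Viehweg fibre-product strategy is a reasonable heuristic, but the paper does not attempt it and instead proves (2) only over a curve, by an entirely different mechanism: it shows that if $\lambda_+ = 0$ then the pushforwards $E_m$ are numerically flat (Proposition~\ref{prop_threshold_to_slope}), invokes the Simpson correspondence to equip them with flat holomorphic connections, and uses the resulting transverse foliation to prove isotriviality (Lemma~\ref{lem_flat_to_isotrivial}, Proposition~\ref{prop_lift}), followed by a case analysis reducing to Kawamata and Kov\'acs--Patakfalvi. Making a Viehweg-type weak-positivity argument work in this GLC setting, with only log canonical general fibres and without an $f$-bigness assumption, would itself require substantial new input that neither you nor the paper supply.
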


We note that if $B = 0$, $X$ has canonical singularities and $f\colon X \rightarrow B$ is a family
of good minimal models, then item (2) was proven in \cite{Kawamata85a}.
In \cite{KP17}, Kov\'{a}cs and Patakfalvi settled item (2) 
when $K_X+B$ is $f$-big and the generic fibre has klt singularities.
In the same work, en route to proving the projectivity of the moduli 
space of stable pairs, 
they settled item (1) when $f$ is a family of stable varieties.

We observe that the hypotheses of Conjecture~\ref{conj_shokurov} are natural.
Indeed, under the assumption of the standard MMP conjectures (which are settled in relative dimension at most 3), given any fibration $f \colon (X,B) \rightarrow Z$ whose generic fibre is log canonical and has non-negative Kodaira dimension, $f$ can then be birationally modified to satisfy the hypotheses of Conjecture~\ref{conj_shokurov}.
Therefore, a fibration as in Conjecture~\ref{conj_shokurov} should be thought as a distinguished preferred birational model of an arbitrary fibration.

Our first main theorem partially settles Conjecture~\ref{conj_shokurov} over 1-dimesional bases 
by using ideas from foliation theory, from \cite{MR3959071}
and the Simpson correspondence.
The following is proven in Theorem \ref{thm_curve_base}.

\begin{theorem}
Let $f\colon (X, B) \rightarrow Z$ be a GLC fibration between projective varieties
with $B \geq 0$, $\dim X= n$ and $\dim Z = 1$.
Suppose that
$(X, B)$ is klt over the generic point of $Z$ and the generic fibre of $f$ admits a good minimal model.

	Then we may find a birationally equivalent GLC fibration $f'\colon (X', B') \rightarrow Z$ with moduli part $M'$ such that $K_{X'}+B'$ is nef over $Z$ and 
	\begin{enumerate}
\item $\kappa(M') \geq \kappa(X'/Z, B')+{\rm var}(X'/Z, B')$; 

	\item if, in addition, $f\colon (X, B^h) \to Z$ is locally stable, then for all $m \gg 0$, $\lambda'_m \coloneqq \det f'_*\mathcal O_{X'}(mM')$ is semi-ample; and
	
	\item if, in addition, $f\colon (X, B^h) \to Z$ is locally stable, $M'+\epsilon f'^*c_1(\lambda_m')$ is semi-ample for $m\gg 0$ and any $\epsilon \in \QP$.


\end{enumerate}
\end{theorem}

Our next theorem is that items (1) and (2) of Conjecture \ref{conj_shokurov} 
holds for locally stable morphisms whenever $\dim X -\dim Z \leq 2$.

\begin{theorem}\label{intro_thm_surfaces}
Let $f\colon (X, B) \rightarrow Z$ be a GLC fibration between projective varieties which is also a locally stable family
with $B \geq 0$, $\dim X= n$ and $\dim Z \geq n-2$.
Suppose that a general fibre of $f$ has non-negative Kodaira dimension.

        Then we may find a birationally equivalent GLC fibration $f'\colon (X', B') \rightarrow Z'$
        such that $K_{X'}+B'$ is nef over $Z'$ and
	\begin{enumerate}
		\item $\lambda'_m$ is semi-ample for $m \gg 0$, where $M'$ is the moduli part of $(X', B')\rightarrow Z'$ and  $\lambda_m' \coloneqq \det f'_*\mathcal O_{X'}(mM')$; and 
		\item $M'+\epsilon f'^*c_1(\lambda_m')$ is semi-ample for $m\gg 0$ and any $\epsilon \in \QP$.
	\end{enumerate}
	\end{theorem}

\begin{remark}\label{rmk_intro}
More generally, Theorem~\ref{intro_thm_surfaces} holds true for fibrations whose general fibre has Kodaira dimension at least $\dim X - \dim Z - 2$.
\end{remark}

In fact, Theorem~\ref{intro_thm_surfaces} and Remark~\ref{rmk_intro} follow from a more general statement, together with the proof of the b-semi-ampleness
conjecture in relative dimension at most $2$, see \cite{ps09,Fujino03,Fil20b,Gio22}.
In the case of an LC trivial fibration items (1) and (2) of Conjecture~\ref{conj_shokurov}
reduce to the b-semi-ampleness conjecture. 
Our next theorem, which then specialises to Theorem~\ref{intro_thm_surfaces}, shows that for locally stable families the general
case is implied by the b-semi-ampleness conjecture.  We refer to  
\cite[Conjecture 7.13.1]{ps09} for a precise statement of the b-semi-ampleness conjecture.

\begin{theorem}[ = Theorem \ref{thm_*}]\label{main_thm_intro}

Assume the b-semi-ampleness conjecture holds for LC trivial fibrations of relative dimension at most $n-1$.
Let $f\colon (X, B) \rightarrow Z$ be a GLC fibration between projective varieties which is also a locally stable family with $B \geq 0$ and $\dim X= n$.
Suppose that a general fibre of $f$ admits a good minimal model.

Then we may find a birationally equivalent GLC fibration 
$f'\colon (X', B') \rightarrow Z'$
fitting into the following commutative diagram
\[
\begin{tikzcd}
  (X', B') \arrow[r, "\alpha", dashrightarrow] \arrow[d, "f'"] & (X, B) \arrow[d, "f"] \\
  Z' \arrow[r, "\beta"] &  Z
 \end{tikzcd}
\]
such that the following hold, where $M'$ is the moduli part of $(X'/Z', B')$

\begin{enumerate}
\item $(X'/Z', B')$ is BP stable and has maximal moduli;

\item $\lambda_m'$ is semi-ample for $m\gg 0$,
where $\lambda_m' = \det f'_*\mathcal O_{X'}(mM')$;

\item $M'+\epsilon f'^*c_1(\lambda_m')$ is semi-ample for $m\gg 0$ and any $\epsilon \in \QP$; and

\item $M'$ and $\lambda_m'$ are compatible with base change in an appropriate sense.
\end{enumerate}

\end{theorem}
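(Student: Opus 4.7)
The plan is to reduce the theorem to the b-semi-ampleness conjecture for LC-trivial fibrations by passing to a relative good minimal model of $(X,B)$ over $Z$ and then factoring through the relative Iitaka fibration. Using the hypothesis that a general fibre of $f$ admits a good minimal model, together with standard results on the existence of relative good minimal models for lc pairs (in the spirit of Hacon--Xu and Birkar), I would first produce a birational modification $\bar f \colon (\bar X, \bar B) \to Z$ of $f$ on which $K_{\bar X} + \bar B$ is semi-ample over $Z$. Let $\phi \colon \bar X \to Y$ be the resulting relative ample model, so that $\bar f = g \circ \phi$ with $g \colon Y \to Z$. By construction $\phi$ is an LC-trivial fibration, with canonical bundle formula
\[
K_{\bar X} + \bar B \sim_{\mathbb{Q}} \phi^*(K_Y + B_Y + M_Y^\phi).
\]

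Passing to a sufficiently high birational model and invoking the assumed b-semi-ampleness conjecture for $\phi$, one can arrange that $M_{Y'}^\phi$ is semi-ample on some suitable birational model $Y'$, with corresponding modifications of $\bar X$ and $Z$, giving $X'$, $Z'$ and $f'$. Unwinding the canonical bundle formula for the composition $f' = g' \circ \phi'$, the moduli part $M'$ of $f'$ over $Z'$ is then, up to $\mathbb{Q}$-linear equivalence, the pullback of $M_{Y'}^\phi$ plus a correction supported over the discriminant of $g'$. The $Y'/Z'$-variation of the family is controlled by the determinant $\lambda_m' = \det f'_* \mathcal{O}(mM')$: by the maximal moduli hypothesis and by Viehweg--Koll\'ar--Kov\'acs--Patakfalvi style positivity results for pushforwards of log pluricanonical sheaves, $\lambda_m'$ is ample on the image of the moduli map of $g'$ and big on $Z'$ once $m$ is large and divisible enough.

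Combining these inputs yields item (2): the perturbation $\epsilon f'^*\lambda_m'$ added to $M'$ promotes the non-$g'$-ample part of the construction to a $g'$-ample class, and after pullback and combination with the semi-ample $M_{Y'}^\phi$ this gives semi-ampleness of $M' + \epsilon f'^*\lambda_m'$ on $X'$ for any $\epsilon \in \mathbb{Q}_{>0}$. Items (1) and (3) follow from the construction: BP stability and maximal moduli are preserved by passing to sufficiently high birational models (and by the base change invariance of the moduli part of the LC-trivial fibration $\phi'$), while compatibility of $M'$ and $\lambda_m'$ with base change holds once $m$ is large and divisible enough to ensure flatness and commutation of formation of $f'_*\mathcal{O}(mM')$.

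The main obstacle I expect is the combination step in item (2): one must arrange that the positivity of $\lambda_m'$ on $Z'$ can be effectively added to the semi-ampleness of $M_{Y'}^\phi$ on $Y'$ to yield semi-ampleness on the whole of $X'$, robustly under arbitrarily small $\epsilon$. This will require a careful relative Nakai--Moishezon or base-point-free argument applied to $M_{Y'}^\phi + \epsilon g'^* \lambda_m'$ on $Y'$, together with a verification that the perturbation does not introduce new base loci along the fibres of $\phi'$. Here the maximal moduli hypothesis plays an essential role, since it is precisely what guarantees that $\lambda_m'$ has the positivity on $Z'$ needed to drive the argument; without it, the $g'$-direction could be degenerate and no perturbation of the form $\epsilon f'^* \lambda_m'$ would suffice.
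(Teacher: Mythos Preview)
Your outline has the right ingredients (relative good minimal model, Iitaka fibration, b-semi-ampleness, Kov\'acs--Patakfalvi positivity) but the way you combine them has two genuine gaps.

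First, you never explain how to make $f'_*\mathcal O(mM')$ a vector bundle commuting with base change, which is what you need both to define $\lambda_m'$ sensibly and to get item (3). The paper does this by first passing to an equidimensional Property~$(*)$ model via \cite{ACSS}, and then taking a Kawamata cover of the base so that the pulled-back family becomes a \emph{locally stable} family (Proposition~\ref{prop_equi*_cover}). Only after this reduction does one have the invariance of plurigenera needed (Theorem~\ref{thm_iitaka}); your proposal jumps straight to the Iitaka fibration without arranging this.

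Second, and more seriously, your mechanism for item (2) does not work as stated. You assert that $\lambda_m'$ is ``big on $Z'$'' by maximal moduli and KP positivity, and then propose a Nakai--Moishezon argument for $M_{Y'}^\phi+\epsilon g'^*\lambda_m'$. But $\lambda_m'$ need not be big: if the family is isotrivial it is numerically trivial, and in general it is only semi-ample. (Also, ``maximal moduli'' in this paper is a stability condition on the moduli b-divisor, not a finiteness statement about a moduli map; it is a \emph{conclusion} of the ACSS construction, not a hypothesis you can invoke for positivity.) The paper's route is quite different: it uses b-semi-ampleness not to make $M^\phi_{Y'}$ globally semi-ample, but to choose, locally on $Z$, divisors $D_i\sim_{\mathbb Q}\mathbf M_Y$ so that $(Y,B_Y+D_i)\to U_i$ is a \emph{stable} family (Proposition~\ref{semistable_iitaka}). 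Each such family is then compactified over a projective cover of $Z$ via \cite{KP17}, where KP gives semi-ampleness of both $\bar\lambda_m$ and $\overline M+\epsilon\bar f^*\bar\lambda_m$ outright (Lemma~\ref{lem_stable_max}); a sheaf comparison (equations \eqref{inj_vb}--\eqref{inj_lb}) then shows the base locus of $\lambda_m$ and of $M+\epsilon f^*\lambda_m$ avoids $U_i$. Covering $Z$ by finitely many such $U_i$ finishes. Your Nakai--Moishezon step would have to reproduce this local-to-global argument, and as written it does not.
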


We refer to Theorem \ref{thm_*} for a precise (and slightly more general) statement.
We remark that Shokurov has shown a related statement, cf. \cite[Corollary 2]{shokurov2021log}.

\subsection*{Acknowledgements}
We thank Zsolt Patakfalvi for many insightful discussions. 
We also thank Paolo Cascini, Giulio Codogni, 
Christopher D. Hacon, Giovanni Inchiostro, Jihao Liu, 
Roberto Svaldi and
Luca Tasin
for many useful discussions. We thank Jihao Liu for feedback on a draft version of this work.
Lastly, we thank the anonymous referee for helpful comments that improved the clarity of this work.


\section{Preliminaries}

We work over $\mathbb C$.
We refer to \cite{KM98} for the standard terminology in birational geometry.
For the language of generalised pairs and b-divisors, we refer to \cite{FS20}.

Our varieties are connected and quasi-projective
unless otherwise stated.
Given a normal variety $X$ and an open subset $U \subset X$, we say that $U$ is big if $\mathrm{codim}_X(X \setminus U) \geq 2$.

All divisors have coefficients in $\QQ$.
Unless otherwise stated, a divisor means a Weil $\QQ$-divisor.
Given a dominant morphism $f \colon X \rar Y$ and a divisor $D=\sum a_i P_i$, we define its horizontal part 
$D^h$ and its vertical part $D^v$ as
\[
D^h \coloneqq \sum_{i|P_i\;\mathrm{dominates}\;Y}a_i P_i, \quad D^v \coloneqq D-D^h.
\]

A log pair $(X, B)$ consists in a normal variety $X$ and a divisor $B$ such that $K_X+B$ is $\mathbb Q$-Cartier.

Given a coherent sheaf $\mathcal F$ of rank $r$ on a normal variety we define 
$\det \mathcal F \coloneqq (\bigwedge^r \mathcal F)^{**}$.

\subsection{(Locally) stable families}

We recall the definition of locally stable family of pairs over a reduced base as in 
\cite[Definition-Theorem 4.7]{Kollar21}.

\begin{definition} \label{def_stable}
Let $S$ be a reduced scheme, $f\colon X \rightarrow S$ a flat morphism of finite type and 
$f\colon (X, \Delta) \rightarrow S$ a well defined family of pairs (see \cite[Definition-Theorem 4.7]{Kollar21}).
Assume that $(X_s, \Delta_s)$ is slc for every $s \in S$.  Then $f\colon (X, \Delta) \rightarrow S$
is locally stable if the following equivalent conditions holds.

\begin{itemize}
\item[(1)] $\K X/S. + \Delta$ is $\RR$-Cartier.
\item[(2)] $f_T \colon (X_T,\Delta_T) \rar T$ is locally stable whenever $T$ is the spectrum of a DVR and 
$q \colon T \rar S$ is a morphism (see \cite[Definition-Theorem 2.3]{Kollar21} for the notion of a locally stable
family over a DVR).
\item[(3)] There is a closed subset $Z \subset X$ such that ${\rm codim} (Z \cap X_s, X_s) \geq 3$ for 
all $s \in S$ and $f_{X \setminus Z}\colon (X\setminus Z) \rightarrow S$ satisfies the above (1-2). 
\end{itemize}
Such a family is called stable if, in addition, $\K X/S. + \Delta$ is $f$-ample.
\end{definition}

By \cite[Theorem 4.8]{Kollar21} 
if $f \colon (X,\Delta) \rar S$ is a (locally) stable family over a reduced base $S$ 
and $\phi \colon W \rar S$ is a morphism with $W$ reduced, then the family over $W$ 
obtained by fibre product is again a (locally) stable family.

We recall the following criterion for local stability.
\begin{proposition}
\label{prop_kollar_criterion}
Let $S$ be a smooth variety and $p\colon (X, \Delta) \rightarrow S$ a projective morphism with $\Delta \geq 0$.
Then $p\colon (X, \Delta) \rightarrow S$ is locally stable if and only if the pair $(X, \Delta+p^*D)$
is semi-log canonical for every reduced simple normal crossing divisor $D \subset S$.
\end{proposition}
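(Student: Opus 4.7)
The statement is an equivalence and both conditions are étale-local on $S$, so the plan is to fix a closed point $s \in S$ with regular parameters $t_1,\ldots,t_n$ and work in a small neighbourhood. I would handle the two directions separately.

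For the reverse implication, the strategy is to use a single well-chosen SNC divisor to extract both pieces of Definition \ref{def_stable}(1). Taking $D = \sum_{i=1}^n \{t_i = 0\}$, the hypothesis gives that $K_X + \Delta + p^*D$ is $\mathbb{R}$-Cartier. Since each $p^*\{t_i=0\}$ is cut out by the regular function $t_i \circ p$, it is Cartier; subtracting these and noting that $K_S$ is Cartier on the smooth base yields that $K_{X/S} + \Delta = (K_X + \Delta) - p^*K_S$ is $\mathbb{R}$-Cartier. Slc-ness of each fibre $(X_s, \Delta_s)$ then follows by iterated inversion of adjunction along the stratum $X_s = \bigcap_i p^*\{t_i = 0\}$ of the slc pair $(X,\Delta + p^*D)$.

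For the forward direction, the identity $K_X + \Delta + p^*D = (K_{X/S} + \Delta) + p^*(K_S + D)$ shows $K_X + \Delta + p^*D$ is $\mathbb{R}$-Cartier, so the real content is slc-ness. I would argue by induction on the number of components of $D$. The base case $D = 0$ asks that a locally stable family over a smooth base has slc total space; I would prove this by restricting to general smooth curves through arbitrary points via Definition \ref{def_stable}(2) and appealing to the known statement that over a DVR, local stability yields slc-ness of the total space (after adding the central fibre, which can then be subtracted since it is reduced). For the inductive step, decompose $D = D_0 + H$ with $H$ a smooth component transverse to $D_0$; by \cite[Theorem 4.8]{Kollar21} the base change to $H$ is locally stable, so combining the inductive hypothesis on $(X, \Delta + p^*D_0)$ with adjunction along $p^*H$ (applied to the base-changed family over $(H, D_0|_H)$) produces slc-ness of $(X, \Delta + p^*D)$.

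The central difficulty is the base case of the forward implication: promoting fibrewise slc-ness together with $\mathbb{R}$-Cartier-ness of $K_{X/S} + \Delta$ to genuine slc-ness of $(X, \Delta)$ on the total space. This is a form of inversion of adjunction for slc pairs in families, and is precisely the core content of Kollár's theory of locally stable families developed in \cite{Kollar21}; once this input is accepted, both directions become essentially bookkeeping with coordinate hyperplanes and adjunction.
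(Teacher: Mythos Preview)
Your proposal is essentially correct and considerably more detailed than the paper's own proof, which consists solely of the sentence ``This is a direct consequence of \cite[Lemma 2.12]{ACSS} and \cite[Corollary 4.55]{Kollar21}.'' You are effectively reconstructing the content of those citations, and you correctly identify that the genuine input is Koll\'ar's inversion-of-adjunction machinery for locally stable families.

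Two small points. In the reverse implication, what you call ``iterated inversion of adjunction along the stratum $X_s$'' is really iterated \emph{adjunction}: you start with the slc pair $(X,\Delta+p^*D)$ and restrict to successive components of $p^*D$, using that the different on a boundary component of an slc pair is again slc. Inversion goes the other way. This is purely terminological and does not affect the argument. In the forward base case, your curve-slicing argument is the right idea but as written it is slightly informal: knowing $(X_C,\Delta_C)$ is slc for general curves $C\subset S$ through $s$ does not by itself force $(X,\Delta)$ to be slc near $X_s$; one really needs inversion of adjunction along a smooth hypersurface $H\ni s$ (using that the base change to $H$ is again locally stable by \cite[Theorem 4.8]{Kollar21}) and then induction on $\dim S$. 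This is essentially what your inductive step already does, so the cleanest packaging is to induct on $\dim S$ from the start rather than separating out a $D=0$ base case. With that adjustment the argument goes through; the paper simply outsources all of this to the cited references.
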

\begin{proof}
This follows directly from
\cite[Corollary 4.55]{Kollar21}.
\end{proof}

\subsection{Property $(*)$, BP stable and all that}
\label{s_prop_*}
We recall some definitions from \cite{ACSS}.

Let $f\colon X \rightarrow Z$ be a projective morphism between normal varieties
and let $(X, B)$ be a log pair.
We say that $f\colon (X, B) \rightarrow Z$ (or $(X/Z, B)$ when the morphism $f$ is understood)
is {\bf generically log canonical} or GLC, if $Z$ is irreducible and $(X, B)$ is log canonical over the generic point 
of $Z$.
In this case, we say that $(X/Z,B)$ is a GLC pair induced by the morphism $f$.
When $f\colon X\rightarrow Z$ is a contraction, i.e., $f_*\cal O_X= \cal O_Z$, we will refer to $f$ 
as a GLC contraction.
When $f$ is a GLC contraction and $\dim X > \dim Z$, we will refer to $f$ as a GLC fibration.

Let $f\colon (X, B) \rightarrow Z$ be GLC. For any prime divisor $P\subset Y$, we define
\[
\gamma_P \coloneqq \sup\{t\in \mathbb R\mid (X,B+tf^*P) \text{ is log canonical over the generic point of }P\}.
\]
The {\bf discriminant } of $(X/Z,B)$ is the $\mathbb R$-divisor
\[
B_Z \coloneqq \sum_P (1-\gamma_P)P.
\]

We now recall the definition of the moduli part of a GLC fibration  $f\colon (X, B) \rightarrow Z$.
We first assume  that $f$ is equidimensional and
$K_Z+B_Z$ is $\mathbb R$-Cartier.
In this case, we define the moduli part $M_X$ to be
\[
M_X\coloneqq K_X+B-f^*(K_Z+B_Z).
\]

Consider now any GLC fibration $f\colon (X, B) \to Z$. By \cite[Theorem 2.1]{AK00}, 
there exists an equidimensional contraction $f'\colon X'\to Z'$ which is birationally equivalent to $f$, 
such that $Z'$ is smooth and the induced maps $\alpha\colon Z'\to Z$ and $\beta\colon X'\to X$ 
are birational morphisms.
Let $M_{X'}$ be the moduli part of the induced morphism $(X',B') \rightarrow Z'$. 
We then define the {\bf moduli part} of $f\colon (X, B) \rightarrow Z$  as
\[M_X\coloneqq\alpha_*M_{X'}.
\]
It is easy to check that $M_X$ does not depend on the choice of $f'$.

We refer to \cite[Definition 2.5]{ACSS} for the notion of BP stability and to \cite[Definition 2.20]{ACSS}
for the definition of maximal moduli.

\begin{definition}\label{d_property*}
Let $(X/Z,B)$ be a GLC pair induced by a  morphism $f\colon X\to Z$.  
We say that $(X/Z,B)$ satisfies {\bf Property $(*)$} if $f$ is a projective  contraction and the following hold:
\begin{enumerate}
\item  there exists a reduced divisor $\Sigma_Z$ on $Z$  such that $(Z,\Sigma_Z)$ is log smooth and 
the vertical part of $B$ is a reduced divisor that coincides with 
$f^{-1}(\Sigma_Z)$; and 
\item for any closed point $z\in Z$ and for any  $\Sigma\ge \Sigma_Z$ reduced 
divisor on $Z$ such that $(Z,\Sigma)$ is log smooth around $z$,  
we have that $(X,B+f^*(\Sigma-\Sigma_Z))$ is log canonical around $f^{-1}(z)$.
\end{enumerate}
\end{definition}

Given a generalised pair $(X, B,\bM.)$ and a contraction $f\colon X \rightarrow Z$ such that $(X, B,\bM.)$ is generalised log canonical
above the generic point of $Z$ we say that $(X/Z, B,\bM.)$ is a {\bf generalised GLC pair}.
If a generalised GLC pair satisfies the conditions of Definition \ref{d_property*} 
for a generalised pair, mutatis mutandis, we will
say that $(X/Z, B,\bM.)$ satisfies {\bf generalised Property $(*)$}.

\begin{lemma}
\label{lem_img_lc_centre_is_lc}
Let $f\colon (X, B, \bM.)\rightarrow Z$ be a generalised GLC contraction which satisfies generalised Property $(*)$
and let $(Z, \Sigma_Z)$ be the associated log smooth pair.
Let $W$ a generalised log canonical centre of $(X, B, \bM.)$.  Then $f(W)$ is a log canonical 
centre of $(Z, \Sigma_Z)$.  
\end{lemma}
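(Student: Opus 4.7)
The plan is to argue by contradiction: suppose $f(W)$ is not a log canonical centre of $(Z,\Sigma_Z)$ and let $V$ be the unique stratum of the log smooth pair $(Z,\Sigma_Z)$ that is minimal among those containing $\overline{f(W)}$. Since the log canonical centres of a log smooth pair are exactly its strata, this assumption forces $\overline{f(W)} \subsetneq V$. The strategy is to use the extra flexibility of generalised Property $(*)$ to modify the boundary by $f^*D$ for a cleverly chosen $D$, and then detect a generalised log canonical place of discrepancy $<-1$ over a general closed point of $f(W)$.

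I would next pick a general closed point $z\in f(W)$, so that $V$, $\overline{f(W)}$, and each irreducible component of $\Sigma_Z$ passing through $z$ are smooth at $z$, and no stratum of $\Sigma_Z$ strictly contained in $V$ passes through $z$. The key step is then to construct a reduced effective divisor $D$ on $Z$ that (i) contains $\overline{f(W)}$, (ii) is smooth at $z$, and (iii) makes $(Z,\Sigma_Z + D)$ log smooth at $z$. The natural construction is to take $D$ as a general element of the linear subsystem of $|mH|$ (for $H$ sufficiently ample and $m \gg 0$) cutting out $\overline{f(W)}$; Bertini applied away from $\overline{f(W)}$, together with the strict containment $\overline{f(W)} \subsetneq V$, should guarantee smoothness at $z$ and transversality to the components of $\Sigma_Z$ through $z$.

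With such $D$ in hand, applying generalised Property $(*)$ to $\Sigma := \Sigma_Z + D$ yields that $(X,B+f^*D,\bM.)$ is generalised log canonical in a neighbourhood of $f^{-1}(z)$. On the other hand, because $W$ is a generalised log canonical centre, there exists a birational model $\pi\colon Y\to X$ carrying a prime divisor $E\subset Y$ with $a_E(X,B,\bM.) = -1$ and $\pi(E) = W$. Since $\overline{f(W)} \subset D$, the composition $f\circ \pi$ sends $E$ into $D$, so $\mathrm{mult}_E(f\pi)^*D \geq 1$, which gives $a_E(X,B+f^*D,\bM.) \leq -2$. Because $z\in f(W)$ implies $W\cap f^{-1}(z) \neq \emptyset$, this produces a point of $f^{-1}(z)$ where $(X,B+f^*D,\bM.)$ fails to be generalised log canonical, contradicting Property $(*)$.

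The main obstacle I expect is the construction in steps (i)--(iii) above: producing a single global divisor $D$ on $Z$ that contains the possibly singular or high-codimension subvariety $\overline{f(W)}$ while being smooth and suitably transverse to $\Sigma_Z$ at a general $z$. The remaining ingredients — the generalised discrepancy computation along $E$ and the extraction of a contradiction from Property $(*)$ — are essentially formal once $D$ has been arranged.
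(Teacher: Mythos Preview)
Your argument is correct and is exactly the unpacking of the paper's one-line proof (``Follows directly from the definition of generalised Property $(*)$''). The Bertini step you flag as the main obstacle is routine: at a general $z\in f(W)$ both $\overline{f(W)}$ and the relevant strata of $\Sigma_Z$ are smooth, and since Property $(*)$ only requires $(Z,\Sigma_Z+D)$ to be log smooth \emph{around} $z$, a general member of a sufficiently ample linear system through $\overline{f(W)}$ works (transversality at $z$ is forced by $\mathrm{codim}_Z\,\overline{f(W)} > \mathrm{codim}_Z\,V$).
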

\begin{proof}
Follows directly from the definition of generalised Property $(*)$.
\end{proof}
We recall that given a generalised pair $(X, B, \bM.)$ that $X$ itself is a log canonical centre.

\begin{lemma} \label{good_mm_fibre_to_space}
Let $f \colon (X,B) \rar Z$ be a GLC contraction where $B \geq 0$.
Assume that $(X,B)$ is log canonical and that every log canonical centre of $(X,B)$ dominates $Z$.
Then, if a general fibre of $f$ has a good minimal model, then $(X,B)$ has a good minimal model over $Z$.
\end{lemma}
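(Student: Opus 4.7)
The plan is to reduce the lc case to the klt case (and hence to Lai's theorem on varieties fibred by good minimal models) using the dominance of lc centres together with a standard relative MMP argument.

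First I would take a $\mathbb{Q}$-factorial dlt modification $g\colon (Y, B_Y) \rar (X, B)$, which is crepant. Since $g$ maps every lc centre of $(Y, B_Y)$ onto an lc centre of $(X, B)$, the hypothesis that every lc centre of $(X, B)$ dominates $Z$ passes to $(Y, B_Y)$; moreover a good minimal model of $(Y, B_Y)$ over $Z$ pushes forward via $g$ to one for $(X, B)$ over $Z$. Hence we may assume $(X, B)$ is $\mathbb{Q}$-factorial dlt. Note also that pseudo-effectivity of $K_{X_\eta}+B_\eta$ on the generic fibre (which follows from the existence of a good minimal model on a general fibre) implies that $K_X+B$ is pseudo-effective over $Z$, so running a $(K_X+B)$-MMP over $Z$ is the natural approach.

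Next I would invoke the lc version of Lai's theorem on fibre spaces of good minimal models: if $(X, B)$ is $\mathbb{Q}$-factorial dlt, projective over $Z$ with $K_X+B$ pseudo-effective over $Z$, all of whose lc centres dominate $Z$, and such that a general fibre of $f$ admits a good minimal model, then $(X, B)$ admits a good minimal model over $Z$. In the klt case, this is Lai's original result. The extension to the dlt setting proceeds by induction on the number of components of $\lfloor B \rfloor$: by adjunction, the restriction $(S, B_S)$ of $(X, B)$ to any component $S$ of $\lfloor B \rfloor$ is again an lc pair, every lc centre of $(S, B_S)$ dominates $Z$ (because $S$ is itself horizontal over $Z$ by hypothesis), and the general fibre of $S \rar Z$ inherits a good minimal model from the good minimal model on the ambient general fibre of $f$. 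A relative MMP with scaling over $Z$, combined with special termination in the lc setting, then reduces matters to the klt open locus (handled by Lai) together with the inductive hypothesis on each $(S, B_S)/Z$.

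The main obstacle, and the heart of the argument, is this inductive step: one must verify that running the relative MMP with scaling neither creates new vertical lc centres nor destroys goodness (i.e.\ semi-ampleness) of the limiting minimal model, which must propagate from the generic fibre -- via the relative base-point-free theorem -- to the whole of $Z$. Both points rely crucially on the hypothesis that every lc centre of $(X, B)$ dominates $Z$: this prevents the appearance of vertical lc strata during the MMP and ensures that adjunction on each boundary stratum respects the dominance condition required to continue the induction.
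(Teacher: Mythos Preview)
Your plan diverges from the paper's argument. The paper does not use induction on components of $\lfloor B\rfloor$ or special termination. Instead, after passing to a dlt model (via \cite[Theorem 2.9]{LT19}), it invokes \cite[Theorem 1.1]{HX11}: since every lc centre is horizontal over $Z$, it suffices to produce a good minimal model over some non-empty open subset of $Z$. Over such an open set one then takes a log resolution $(X',B'+E')$ which is log smooth over $Z$, and \cite[Theorem 1.2]{HMX14} yields a good minimal model directly from the hypothesis on general fibres. Thus the horizontal assumption is used exactly once, to justify shrinking $Z$, rather than being threaded through an inductive scheme.

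Your approach has a genuine gap at the inductive step. The claim that ``the general fibre of $S\to Z$ inherits a good minimal model from the good minimal model on the ambient general fibre'' is not automatic: even if $(X_z,B_z)$ admits a good minimal model $X_z\dashrightarrow X'_z$, the induced rational map on a boundary component $S_z$ need not be a birational contraction, nor is the image of $S_z$ obviously a good minimal model of $(S_z,B_{S_z})$. This is essentially adjunction for abundance on lc centres, a substantial input you have not justified. Separately, the ``klt case via Lai'' that you invoke still ultimately rests on \cite[Theorem 1.2]{HMX14} (which requires log smoothness, not merely klt), so even if the induction could be repaired you would arrive at the same key citation the paper uses---after a considerably longer detour.
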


\begin{proof}
By \cite[Theorem 2.9]{LT19}, we may show the claim up to extracting some log canonical places of $(X,B)$.  Therefore, we are free to replace $(X, B)$ by a dlt modification, which is guaranteed to exist by \cite[Theorem 3.1]{KK09}, and so we may assume that $(X, B)$ is dlt.
Furthermore, since all log canonical places are horizontal over $Z$, by \cite[Theorem 1.1]{HX11}, it suffices to show the claim over a non-empty open subset of $Z$.
Therefore, throughout the proof we are free to shrink $Z$ as needed.

Let $\pi \colon X' \rar X$ be a log resolution of $(X,B)$ that only extracts valuations with positive log discrepancy, which exists by \cite[Theorem 2.44]{KM98}.
We denote the strict transform of $B$ by $B'$, and we let $E'$ be the reduced $\pi$-exceptional divisor.
Up to shrinking $Z$, we may assume that $(X',B'+E') \rar Z$ is log smooth.
By construction, a good minimal model for $(X,B)$ over $Z$ is also a good minimal model for $(X',B'+E')$ over $Z$, and vice versa.

By assumption, for $z \in Z$ general, $(X_z,B_z)$ has a good minimal model.
Since $\pi$ is a fibrewise log resolution that only extracts valuations of positive log discrepancy, this good minimal model is also a good minimal model for $(X'_z,B'_z+E'_z)$.
Then, by \cite[Theorem 1.2]{HMX14}, $(X',B'+E')$ has a good minimal over $Z$.
In turn, this is a good minimal model for $(X,B)$ over $Z$, and the claim follows.
\end{proof}

\begin{lemma}
\label{lem_gen_fibre_min_model}
Let $f\colon (X, B)\rightarrow Z$ be a GLC contraction where $B \geq 0$ and 
which satisfies Property $(*)$.
Suppose that a general fibre of $(X, B) \rightarrow Z$ admits a good minimal model.  
Then $(X, B)$ has a good minimal
model over $Z$.
\end{lemma}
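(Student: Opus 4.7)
The plan is to reduce to the $\QQ$-factorial dlt case and then apply \cite[Theorem 1.1]{HX11}. First, I would observe that $(X,B)$ is globally log canonical: this follows from Property $(*)$ by taking $\Sigma = \Sigma_Z$ in its second clause, which gives that $(X,B)$ is log canonical around $f^{-1}(z)$ for every closed point $z \in Z$. By \cite[Theorem 2.9]{LT19}, I may then fix a $\QQ$-factorial dlt modification $\pi \colon (X',B') \rar (X,B)$ with $K_{X'} + B' = \pi^*(K_X + B)$. Since this modification is crepant, it suffices to prove that $(X',B')$ admits a good minimal model over $Z$: any such model descends to a good minimal model of $(X,B)$ over $Z$.

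Second, I would transfer the minimal-model hypothesis to $(X',B')$. Over a general point $z \in Z$, the restriction $\pi_z \colon (X'_z,B'_z) \rar (X_z,B_z)$ is crepant birational, so $(X'_z,B'_z)$ has a good minimal model whenever $(X_z,B_z)$ does. A standard spreading-out argument then yields that the scheme-theoretic generic fibre $(X'_\eta,B'_\eta)$ admits a good minimal model over $K(Z)$; in particular, $K_{X'} + B'$ is $f'$-pseudo-effective over $Z$.

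Finally, I would invoke \cite[Theorem 1.1]{HX11} applied to the $\QQ$-factorial dlt pair $(X',B')$ over $Z$: since the generic fibre has a good minimal model and $K_{X'}+B'$ is $f'$-pseudo-effective, $(X',B')$ admits a good minimal model over $Z$, which then descends to one for $(X,B)$. The main obstacle I anticipate is the spreading-out step, passing from good minimal models on each general closed fibre to one on the scheme-theoretic generic fibre; this requires running the relative MMP over an open subset of $Z$ and specialising carefully. The role of Property $(*)$ in the plan is to guarantee that $(X,B)$ is globally log canonical (rather than merely log canonical over the generic point of $Z$), which is precisely what allows the dlt modification to exist at the start.
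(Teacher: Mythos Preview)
Your outline is broadly correct, but it differs from the paper's argument in how Property~$(*)$ is exploited, and the step you flag as an obstacle is exactly where the paper invests its effort.

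The paper does \emph{not} simply take a dlt modification and invoke \cite{HX11}. Instead it uses Property~$(*)$ in a sharper way: since the vertical part of $B$ equals $f^{-1}(\Sigma_Z)$, one may replace $B$ by $B - t f^*\Sigma_Z$ for $0 < t \ll 1$. This is still an effective boundary, $K_X + B \sim_{\QQ, Z} K_X + (B - t f^*\Sigma_Z)$, and by Lemma~\ref{lem_img_lc_centre_is_lc} every log canonical centre of the new pair dominates $Z$. One is then in the situation of Lemma~\ref{good_mm_fibre_to_space}, which handles precisely the case where all log canonical centres are horizontal. In your approach Property~$(*)$ only supplies global log canonicity; the paper extracts more from it.

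Your ``spreading-out'' step, passing from good minimal models on general closed fibres to a relative good minimal model over an open subset of $Z$, is not a routine consequence of \cite[Theorem~1.1]{HX11}; that result extends a good minimal model from an open subset to all of $Z$, but it does not produce one over the open subset. The paper fills this gap (inside the proof of Lemma~\ref{good_mm_fibre_to_space}) by passing to a log resolution that is log smooth over $Z$ and then invoking \cite[Theorem~1.2]{HMX14}. Your sketch would need the same ingredient, and the horizontal-lc-centre reduction is what makes the passage to a fibrewise log resolution clean. Also, the citation \cite[Theorem~2.9]{LT19} is used in the paper to justify that it suffices to prove the claim after extracting log canonical places, not to construct the dlt modification itself; the existence of dlt modifications is standard.
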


\begin{proof}
By Lemma \ref{lem_img_lc_centre_is_lc}, we see that for $0<t\ll 1$ every log canonical centre of $(X, B-tf^*\Sigma)$ 
dominates $Z$ and $B-tf^*\Sigma \geq 0$.
Then, by Lemma~\ref{good_mm_fibre_to_space}, $(X, B-tf^*\Sigma)$ admits a good minimal model over $Z$.
Since $K_{X}+B \sim_{\mathbb Q} K_{X}+B-tf^*\Sigma$ we see that this is a 
good minimal model for $(X, B)$ over $Z$ as well.
\end{proof}

\begin{lemma}
\label{lem_prop*_horz_semi-stable}
Let $f\colon (X, B)\rightarrow Z$ be a GLC contraction which satisfies Property $(*)$ 
and such that $B=B^h \geq 0$.
Then $f\colon (X, B)\rightarrow Z$ is a locally stable family.
Conversely, any locally stable family over a smooth base satisfies Property $(*)$.
\end{lemma}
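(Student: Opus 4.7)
Both implications will be deduced from Proposition~\ref{prop_kollar_criterion}, which characterises local stability over a smooth base by the slc-ness of $(X, B + f^*D)$ for every reduced SNC divisor $D$ on the base.

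\emph{Forward direction.} Assume $(X, B)$ satisfies Property~$(*)$ with $B = B^h$. Then $B^v = 0$, so condition~(1) of Property~$(*)$ forces $f^{-1}(\Sigma_Z) = 0$, hence $\Sigma_Z = 0$ and $Z$ is smooth. Condition~(2) then asserts that for every $z \in Z$ and every reduced $\Sigma$ on $Z$ with $(Z, \Sigma)$ log smooth at $z$, the pair $(X, B + f^*\Sigma)$ is lc around $f^{-1}(z)$. Applied pointwise to an arbitrary globally SNC divisor $D \subset Z$, this gives that $(X, B + f^*D)$ is lc, and hence slc since $X$ is normal. Proposition~\ref{prop_kollar_criterion} then implies that $f$ is locally stable.

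\emph{Converse direction.} Let $f\colon (X, B) \to Z$ be a locally stable family over a smooth base. The main step is to prove $B = B^h$; once this is achieved, taking $\Sigma_Z = 0$ makes condition~(1) of Property~$(*)$ trivial, while condition~(2) follows from Proposition~\ref{prop_kollar_criterion} after Zariski-localising on $Z$ so that $\Sigma$ becomes globally SNC. To show $B = B^h$, suppose for contradiction that some $f$-vertical prime divisor $F \subset X$ has coefficient $b > 0$ in $B$, and set $W := f(F)$, of codimension $k \geq 1$ in $Z$. At a general point $p \in W$, choose a regular system of parameters $t_1, \ldots, t_k$ on $Z$ cutting out $W$ scheme-theoretically near $p$, and let $U \subset Z$ be a Zariski open neighbourhood of $p$ on which $D := \sum_{i=1}^{k}\{t_i=0\}$ is a reduced SNC divisor. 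Local stability restricts to opens, so Proposition~\ref{prop_kollar_criterion} applied to $f|_U$ and $D$ yields that $(X|_U, B|_U + f|_U^*D)$ is slc; reading off the coefficient of $F$ gives
\[
b + \mult_F(f^*D) \leq 1,
\]
whereas $F \subset f^{-1}(\{t_i=0\})$ for every $i$ forces $\mult_F(f^*D) \geq k \geq 1$. Therefore $b \leq 0$, a contradiction, and $B^v = 0$.

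The technical core of the argument is this reduction to $B = B^h$ in the converse: it requires producing a local SNC divisor through $f(F)$, which relies essentially on the smoothness of $Z$ to extract a regular system of parameters cutting out $W$, and on the local nature of the slc criterion so that the candidate divisor $D$ only needs to be SNC on the Zariski open $U$ rather than globally on $Z$.
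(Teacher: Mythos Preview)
Your proof is correct and follows the same route as the paper: both directions are reduced to Proposition~\ref{prop_kollar_criterion} after observing that $\Sigma_Z=0$. The paper's proof is terser and does not spell out the converse; your explicit verification that $B^v=0$ for a locally stable family (via the multiplicity estimate along a vertical component) is a welcome addition, though in Koll\'ar's framework the boundary of a well-defined family of pairs is already horizontal by convention, which is presumably why the paper omits this step.
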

\begin{proof}
From the equality $B = B^h$ we see that if $\Sigma_Z$ is the reduced divisor associated
to $(X/Z, B)$ then $\Sigma_Z = 0$.  The definition of Property $(*)$ together 
with Proposition \ref{prop_kollar_criterion}
immediately implies our claim.
\end{proof}

\subsection{Preliminaries on foliations}

We will need some ideas from the theory of foliations during our analysis of the variation of a family
of varieties.  For completeness, we provide a brief summary of some relevant definitions and results from
the study of foliations on algebraic varieties.

Given a normal variety $X$, we define a foliation $\mathcal F$ to be the datum
of a saturated subsheaf $T_{\mathcal F} \subset T_X$, called the tangent sheaf of the foliation,
such that $T_{\mathcal F}$ is closed under Lie bracket.
The rank of $\mathcal F$ is defined to be the rank of $T_{\mathcal F}$
as a sheaf.
We define the normal sheaf to be $N_{\mathcal F} \coloneqq (T_X/T_{\mathcal F})^{**}$ and the cotangent sheaf to
be $\Omega^1_{\mathcal F} \coloneqq T_{\mathcal F}^*$.

We refer to \cite{Brunella00} for basic notions and definitions regarding foliations.
We recall that given a smooth foliation $\mathcal F$ on a smooth variety $X$, at any point $x \in X$ 
there is an analytic neighbourhood $U$ and a holomorphic submersion $F\colon U \rightarrow V$
such that $T_{\mathcal F}|_U = T_{U/V}$.  The leaves of $\mathcal F$ are locally given by the fibres of this submersion.

We refer to \cite[\S~2.3]{CS21} for the notion of the transform of a foliation under a birational map.

Given a variety, a foliation $\mathcal F$ and a sheaf $E$, we say that $E$ admits a partial $\mathcal F$-connection
provided there exists a $\mathbb C$-linear map 
$\nabla\colon E \rightarrow E \otimes \Omega^1_{\mathcal F}$ which satisfies the
Leibniz rule.
We recall that the normal sheaf of a foliation $N_{\mathcal F}$ is always equipped with a partial $\mathcal F$-connection, called
the Bott connection.

\begin{lemma}
\label{lem_partial_connection_invariant}
Let $X$ be a normal variety, let $\mathcal F$ be a foliation on $X$ and let $L$ be a line bundle 
equipped with a partial $\mathcal F$-connection, 
$\nabla\colon L \rightarrow L \otimes \Omega^1_{\mathcal F}$.
Let $s \in H^0(X, L)$ and suppose that $\nabla(s) = 0$.  Then $D = (s = 0)$ is $\mathcal F$-invariant.
\end{lemma}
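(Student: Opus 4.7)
The plan is to reduce to a direct local computation. Since $\mathcal F$-invariance of the divisor $D$ is a condition that can be checked on any big open subset of $X$, I may first pass to the open locus where $X$ is smooth and $T_{\mathcal F}$ is a subbundle of $T_X$ (so that $\Omega^1_{\mathcal F}$ is locally free and the canonical map $\Omega^1_X \to \Omega^1_{\mathcal F}$ is a surjective bundle map). On any such open subset, I may shrink further and choose a trivialisation of $L$.

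With these reductions in place, fix a local nowhere vanishing section $e$ of $L$ and write $s = f \cdot e$ for some regular function $f$, so that $D = (f = 0)$ locally. Writing $\nabla(e) = \alpha \otimes e$ for a unique local section $\alpha$ of $\Omega^1_{\mathcal F}$, the Leibniz rule yields
\[
\nabla(s) \;=\; \nabla(f e) \;=\; \bigl( df|_{\mathcal F} + f\alpha\bigr) \otimes e,
\]
where $df|_{\mathcal F}$ denotes the image of $df$ in $\Omega^1_{\mathcal F}$. The hypothesis $\nabla(s) = 0$ then forces
\[
df|_{\mathcal F} \;=\; -f\,\alpha \quad \text{in } \Omega^1_{\mathcal F}.
\]

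Pairing this identity with any local section $v$ of $T_{\mathcal F}$ and using that the pairing of $df$ with $v \in T_{\mathcal F} \subset T_X$ equals $v(f)$, one obtains
\[
v(f) \;=\; \langle df, v\rangle \;=\; -f\,\langle \alpha, v\rangle \;\in\; (f).
\]
Thus $f$ is preserved, modulo $(f)$, by every local vector field tangent to $\mathcal F$, which is exactly the condition that $D$ be $\mathcal F$-invariant. The only point requiring mild care is the initial reduction to the locus where $L$ is trivial and $T_{\mathcal F}$ is a subbundle, but this is standard since the complement has codimension at least two in $X$ and hence does not affect invariance of a divisor.
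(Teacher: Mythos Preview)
Your proof is correct and follows essentially the same approach as the paper: reduce to the smooth locus (a big open), trivialise $L$, apply the Leibniz rule to $s=f\cdot e$ to obtain $df|_{\mathcal F}=-f\alpha$, and conclude invariance. The only cosmetic difference is that the paper phrases the computation as $df + f\omega \equiv 0 \bmod N^*_{\mathcal F}$ rather than working directly in $\Omega^1_{\mathcal F}$ and pairing with $v\in T_{\mathcal F}$; your formulation is arguably cleaner at the final step.
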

\begin{proof}
The invariance of a divisor may be checked on a big open subset and so 
we may freely assume that $X$
and $\mathcal F$ are smooth.

Let $\{U_i\}$ be a trivialising open cover for $L$ and let $t_i$ be local generators for $L$.
Write $s = f_it_i$, where $f_i = 0$ is a local equation for $D$.  We may compute  
\[
0 = \nabla(s) = \nabla(f_it_i) = df_i\otimes t_i + f_i\nabla(t_i) \mod L \otimes  N^*_{\mathcal F}.
\]
Since $\nabla(t_i) = \omega\otimes t_i$ for a holomorphic one form $\omega$, we
see that $df_i +f_i\omega = 0 \mod N^*_{\mathcal F}$,
from which we may conclude that after restricting to $D$ we have  $df_i\vert_D = 0 \mod N^*_{\mathcal F}\vert_D$.
This implies that $\{f_i = 0\}$ is $\mathcal F$-invariant
as required. 
\end{proof}

Let $X$ be a smooth variety, let $X_0 \subset X$ be a local complete intersection (l.c.i.)
subvariety and let $\mathcal F$ be a smooth foliation
on $X$.  We say that $X_0$ is everywhere transverse to $\mathcal F$ provided the natural map 
$T_{\mathcal F}\vert_{X_0} \rightarrow N_{X_0/X}$ is an isomorphism.  

\begin{lemma}
\label{lem_transverse_to_locally trivial}
Let $f\colon P \rightarrow Z$ be a proper fibration between complex manifolds, let $\mathcal F$
be a smooth foliation on $P$ with $\rk \mathcal F = \dim Z$ 
and let $Y$ be a $\mathcal F$-invariant subvariety.  
Let $z \in Z$ be a closed point and let $P_z$ and $Y_z$ be the fibre
over $z$ of $P$ and $Y$ respectively.
  
If $P_z$ is everywhere transverse to $\mathcal F$ (in particular $P_z$ is l.c.i.) then there exists an open Euclidean neighbourhood
$U\subset Z$ of $z$
such that $(f^{-1}(U), Y \cap f^{-1}(U)) \cong (P_z, Y_z) \times U$ as analytic spaces and 
$\mathcal F\vert_{f^{-1}(U)}$ is given by projection onto the first coordinate.
\end{lemma}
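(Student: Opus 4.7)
The plan is to recognise the hypotheses as saying that $\mathcal F$ induces a flat Ehresmann connection on $f$ near $P_z$, and then to build the trivialisation by flat parallel transport of $P_z$ over a contractible neighbourhood of $z$, using $\mathcal F$-invariance to carry $Y_z$ along for free.

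First, I would translate the transversality hypothesis into a statement about $df$. Because $P_z$ is a fibre of the submersion $f$, there is a canonical identification $N_{P_z/P}\cong f^*T_{Z,z}|_{P_z}$, so saying that $P_z$ is everywhere transverse to $\mathcal F$ is the same as saying that the composition
\[
df|_{T_{\mathcal F}}\colon T_{\mathcal F}\longrightarrow f^*T_Z
\]
is an isomorphism at every point of $P_z$; the hypothesis $\rk\mathcal F=\dim Z$ is exactly the matching of ranks. Being an isomorphism is an open condition on the source, and $P_z$ is compact by properness of $f$, so $df|_{T_{\mathcal F}}$ is an isomorphism on some open neighbourhood $V\subset P$ of $P_z$. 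Using properness once more, I may choose a contractible open Euclidean neighbourhood $U\subset Z$ of $z$ (say a small polydisc) such that $f^{-1}(U)\subset V$.

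Second, I would assemble these data into a flat Ehresmann connection and apply a standard trivialisation theorem. On $f^{-1}(U)$ the subbundle $T_{\mathcal F}$ is complementary to the vertical bundle $T_{f^{-1}(U)/U}$ (since $df|_{T_{\mathcal F}}$ is an isomorphism there) and is involutive by the very definition of a foliation. This is exactly the datum of a flat Ehresmann connection on the proper holomorphic submersion $f^{-1}(U)\to U$. Ehresmann's fibration theorem, together with flatness (equivalently, the holomorphic Frobenius theorem applied locally and glued using simple-connectivity of $U$), then yields a biholomorphism
\[
\phi\colon P_z\times U\longrightarrow f^{-1}(U)
\]
defined by setting $\phi(p,u)$ to be the endpoint of the horizontal lift, starting at $p$, of any path in $U$ from $z$ to $u$; path-independence follows from simple-connectivity of $U$ and integrability of the horizontal distribution. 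Under $\phi$, the leaves of $\mathcal F$ correspond to the slices $\{p\}\times U$, so $\mathcal F|_{f^{-1}(U)}$ is indeed the foliation given by projection onto the first factor.

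Finally, I would propagate $Y_z$ along $\mathcal F$ using invariance. Because $Y$ is $\mathcal F$-invariant, any leaf of $\mathcal F$ meeting $Y$ is contained in $Y$; hence $\phi^{-1}(Y\cap f^{-1}(U))$ is saturated with respect to the projection to $U$, and its fibre over $z$ is $Y_z$. Thus $\phi$ restricts to a biholomorphism $Y_z\times U\cong Y\cap f^{-1}(U)$ of analytic subspaces, which together with the previous step is the desired product decomposition of the pair. The main obstacle is the globalisation step: upgrading pointwise transversality to a product structure on a neighbourhood of the whole fibre $P_z$. This is where properness of $f$ (to produce $U$ with $f^{-1}(U)\subset V$), flatness of the Ehresmann connection (to ensure path-independent parallel transport over the simply connected $U$), and $\mathcal F$-invariance of $Y$ (to make the trivialisation compatible with $Y$, including at singular points of $Y$, by working leaf-wise on $Y^{\mathrm{sm}}$ and extending analytically) all come together.
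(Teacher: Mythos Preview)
Your argument is correct and is essentially the paper's proof in different packaging: the paper builds the trivialisation via the \emph{graphic neighbourhood} construction (assembling the family of leaf-germs through the points of $P_z$ into a germ $\Gamma\subset P_z\times P$ and observing that the second projection $\Gamma\to P$ is a local biholomorphism near $P_z$), which is exactly your parallel transport along the flat Ehresmann connection $T_{\mathcal F}$, and then uses $\mathcal F$-invariance of $Y$ to conclude just as you do. One harmless slip: $\phi^{-1}(Y)$ is saturated for the projection to $P_z$ (its fibres are the leaves $\{p\}\times U$), not for the projection to $U$.
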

\begin{proof}
Since $P_z$ is everywhere transverse to $\mathcal F$, by definition, the natural map $T_P\vert_{P_z} \rightarrow N_{P_z/P}$
is surjective, in particular, $P_z$ is smooth.

We recall the graphic neighbourhood construction of \cite[\S~2.1]{bm16}.  
We remark that \cite{bm16} only considers
the graphical neighbourhood in the case where $P_z$ is a curve, 
but the construction works for general smooth
varieties just as well.
Let $\delta \colon P_z \rightarrow P_z \times P$
be the diagonal embedding.  We may find a germ $\Gamma$ of an analytic variety containing $\delta(P_z)$
so that for all $t \in P_z$ we have $q(p^{-1}(t)) \subset P$ is a germ of the leaf of $\mathcal F$ through $t$,
where $p\colon \Gamma \rightarrow P_z$ and $q\colon \Gamma \rightarrow P$ are the projections.

Observe $\dim \Gamma = \dim P$
and so in fact $q$ gives an isomorphism between an open neighbourhood $V$ of $P_z$ and $\Gamma$.
By the properness of the fibration, perhaps shrinking this neighbourhood, 
we may assume that $V = f^{-1}(U)$ for some open 
Euclidean neighbourhood of $z$
and so
we get a morphism $(p\circ q^{-1}, f)\colon V \rightarrow P_z \times U$ which is easily seen to be an isomorphism.
 
Since the fibres of $p\circ q^{-1}$ are exactly 
the leaves of $\mathcal F$ we see that if $Y$ is an invariant subvariety, then $(p\circ q^{-1})(Y) = Y_z$,
and we may conclude.
\end{proof}

We also recall some well known properties relating holomorphic connections and foliations.

\begin{lemma}
\label{lem_flatconnsummary}
Let $X$ be a smooth variety and let $E$ be a vector bundle on $X$ which admits a flat holomorphic connection
$\nabla\colon E \rightarrow E \otimes \Omega^1_X$.
Let $p\colon P \coloneqq  \mathbb P(E) \rightarrow X$ be the associated
projective space bundle.
Then
\begin{enumerate}
\item $\nabla$ gives a splitting $p^*T_X \rightarrow T_P$ whose image defines a smooth foliation $\mathcal F$ on $P$;

\item $\mathcal O_P(1)$ admits a partial $\mathcal F$-connection; and

\item if $E'$ is a vector bundle on $P$ and admits a partial $\mathcal F$-connection, then $p_*E'$
is locally free and admits a holomorphic connection.
\end{enumerate}

\end{lemma}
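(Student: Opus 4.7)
The three statements are essentially the vector-bundle version of the standard fact that a flat connection on a principal bundle determines a horizontal foliation of the total space. My plan is to construct everything by first producing the horizontal lift on $\PP(E)$, then transporting connections along it.

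For (1), I would work locally. Trivialise $E$ over a chart $U \subset X$ with coordinates $x_1,\dots,x_n$ and fibre coordinates $v_1,\dots,v_r$, write $\nabla = d + A$ with $A = \sum A_i\, dx_i$ a matrix of holomorphic $1$-forms. The horizontal lift of $\partial_{x_i}$ to the total space $\mathrm{tot}(E)$ is $\tilde\partial_{x_i} = \partial_{x_i} - \sum_{j,k}(A_i)_{jk} v_k\, \partial_{v_j}$, and one checks that this vector field is $\mathbb{C}^*$-invariant and preserves the Euler vector field, so it descends to a vector field on $P = \PP(E)$ on each chart, giving a splitting $\sigma\colon p^*T_X \to T_P$ of the relative tangent sequence $0 \to T_{P/X} \to T_P \to p^*T_X \to 0$. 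Flatness $dA + A \wedge A = 0$ translates to $[\tilde\partial_{x_i},\tilde\partial_{x_j}] = 0$, so the image $\mathcal F := \sigma(p^*T_X)$ is involutive and smooth of rank $\dim Z$; Frobenius yields the foliation.

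For (2), note that the tautological subbundle $\mathcal O_P(-1) \hookrightarrow p^*E$ is preserved by the partial $\mathcal F$-connection induced from $p^*\nabla$: a point of $P$ over $x$ is a line $\ell \subset E_x$, and the leaf of $\mathcal F$ through it is, by construction of the horizontal lift, the set of parallel translates of $\ell$ along $\nabla$; thus parallel transport along leaves preserves the tautological line. Concretely, the composition
\[
\mathcal O_P(-1) \hookrightarrow p^*E \xrightarrow{p^*\nabla} p^*E \otimes \Omega^1_P \twoheadrightarrow p^*E \otimes \Omega^1_{\mathcal F}
\]
factors through $\mathcal O_P(-1) \otimes \Omega^1_{\mathcal F}$, giving the desired partial $\mathcal F$-connection on $\mathcal O_P(-1)$, and dualising yields one on $\mathcal O_P(1)$. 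Leibniz is inherited.

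For (3), I would use the fact that $dp\colon \mathcal F \to p^*T_X$ is an isomorphism, so dually $\Omega^1_{\mathcal F} \cong p^*\Omega^1_X$. Given $\nabla'\colon E' \to E' \otimes \Omega^1_{\mathcal F} = E' \otimes p^*\Omega^1_X$, apply $p_*$ and use the projection formula $p_*(E' \otimes p^*\Omega^1_X) = p_*E' \otimes \Omega^1_X$ to obtain a $\mathbb C$-linear map $p_*E' \to p_*E' \otimes \Omega^1_X$; the Leibniz rule descends because the horizontal lift $\sigma(v)$ of a vector field $v$ on $X$ acts on $p^*f$, for a holomorphic function $f$ on $X$, as $v(f)$. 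The main point to check carefully is that $p_*\nabla'$ indeed lands in $p_*E' \otimes \Omega^1_X$, i.e., the Leibniz identity along $\mathcal F$-directions matches that along $T_X$-directions under the isomorphism $\sigma$.

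The only mildly delicate step is verifying, in local coordinates, the compatibilities in (1) and (2): that the lifted vector fields are well defined on $P$, that flatness is exactly what makes $\mathcal F$ integrable, and that parallel transport preserves the tautological line. Everything else is formal manipulation with the projection formula and Leibniz.
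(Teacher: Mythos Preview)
Your proof is correct and, for items (1) and (3), matches the paper's approach: the paper simply omits (1) as a standard fact, and for (3) invokes $p_*\Omega^1_{\mathcal F} = \Omega^1_X$, which is exactly your $\Omega^1_{\mathcal F}\cong p^*\Omega^1_X$ plus the projection formula. For item (2) you take a slightly different route: you show that the tautological line $\mathcal O_P(-1) \subset p^*E$ is preserved by the partial $\mathcal F$-connection induced from $p^*\nabla$, and then dualise. The paper instead works directly on $\mathcal O_P(1)$: choosing a local frame of flat sections $s_i$ of $E$ (which exist since $\nabla$ is flat), it declares the corresponding sections $\overline{s}_i$ of $\mathcal O_P(1)$ to be flat and checks well-definedness via $d(\overline{s}_i/\overline{s}_j)\equiv 0 \bmod N^*_{\mathcal F}$. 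Your argument is more intrinsic and explains \emph{why} the construction works (horizontal transport carries lines to lines); the paper's is a one-line computation once flat local frames are in hand. One cosmetic point: your inclusion $\mathcal O_P(-1)\hookrightarrow p^*E$ uses the classical projectivisation (lines in $E$), whereas the paper's identification of sections of $E$ with sections of $\mathcal O_P(1)$ uses Grothendieck's convention; this is harmless but worth aligning with the paper's conventions when you write it up.
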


\begin{proof}
Item (1) is a standard fact and we omit the proof.  

\medskip
To prove item (2) let $U \subset X$ be a small open subset such that $T_X$ is generated by
$\partial/\partial x_1, \dots, \partial/\partial x_n$ 
and let $s_i$ be a basis of flat sections of $E$. 
Let $\partial_i$ be the lift of $\partial/\partial x_i$ to a vector field on $P$ given by $\nabla$.

Write $p^{-1}(U) = \cup_i U_i$ where $U_i = \{\overline{s}_i \neq 0\}$, where we denote
by $\overline{s}_i$ the section of
$\mathcal O_P(1)$ corresponding to $s_i$. 
 
We define a partial connection $\overline{\nabla}$ on $\mathcal O_P(1)$
by requiring $\overline{\nabla}(\overline{s}_i) = 0$ for all $i$.  This is well defined
because $dg_{ij} = 0 \mod N^*_{\mathcal F}$ where 
$g_{ij} = \overline{s_i}/\overline{s_j}$.
\medskip

The existence of a connection on $p_*E$ is a straightforward consequence of the fact that $p_*\Omega^1_{\mathcal F} = \Omega^1_X$.  The fact that $p_*E$ is locally free follows from
\cite[Corollaire 2.5.2.2]{MR1862024}.
\end{proof}

We remark that under the correspondence in item (1) of 
Lemma \ref{lem_flatconnsummary} the leaves of $\mathcal F$ are exactly the flat
sections of $\nabla$.

Given a normal variety $X$ we say a resolution of singularities $p\colon X' \rightarrow X$ is equivariant
provided $p_*T_{X'} = T_X$.  Such resolutions exist by \cite[Theorem 3.26]{MR2289519}.
We remark that if $\mathcal F$ is a foliation on $X$ such that $T_{\mathcal F}$ is locally free, 
then there exists a foliation on $X'$ such that $p_*T_{\mathcal F'} = T_{\mathcal F}$.

\subsection{Numerical flatness to isotriviality}

We recall that a vector bundle $E$ on a smooth variety is said to be numerically flat provided 
$E$ is nef and $-c_1(E)$ is nef.

\begin{lemma}
\label{lem_flat_to_isotrivial}
Let $f\colon (X, B) \rightarrow Z$ be a projective fibration over a 
smooth projective curve $Z$, where $B \geq 0$, $(X, B)$
is klt, and $K_X+B$ is $f$-semi-ample.

Suppose that for all $m \gg 0$ sufficiently divisible 
we have that the sheaf $E_m \coloneqq f_*\mathcal O_X(ml(K_{X/Z}+B))$ is locally free
and numerically flat.

Let $h\colon X \rightarrow Y$ be the relatively ample
model of $(X, B)$ over $Z$, and let $(Y,B_Y,\bM.)$ be the generalised pair induced by the LC trivial fibration $h$.

\begin{enumerate}
\item There exists a foliation $\mathcal F_Y$ on $Y$ everywhere transverse to the fibres of 
$Y \rightarrow Z$, such that if 
$0 \leq D$ is any divisor such that 
$D \sim_{\mathbb Q}\bM Y.$, then ${\rm Supp}(B_Y+D)$ is $\mathcal F_Y$ invariant.

\item In particular, $(Y, B_Y+D) \rightarrow Z$ is a locally trivial family.

\item If $(Y_0, (B_Y+D)_0)$ denotes a general
fibre of $(Y, B_Y+D) \rightarrow Z$, then there exists a finite \'etale cover $Z' \rightarrow Z$
such that $(Y, B_Y+D)\times_ZZ' \cong (Y_0, (B_Y+D)_0)\times Z'$.

\end{enumerate}
\end{lemma}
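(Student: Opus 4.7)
The plan is to transfer numerical flatness into differential-geometric data, use that to construct a transverse foliation on $Y$, then conclude local triviality and isotriviality. First, by the Simpson correspondence for numerically flat bundles (in the form of Deng--Greb--Kebekus--Naumann, strengthening Demailly--Peternell--Schneider), each numerically flat $E_m$ admits a canonical flat holomorphic connection $\nabla_m$, and the category of such bundles is an abelian $\otimes$-category equivalent to finite-dimensional representations of $\pi_1(Z)$; in particular, every morphism between numerically flat bundles is automatically $\nabla$-compatible. Applying Lemma~\ref{lem_flatconnsummary} to $(E_m,\nabla_m)$ yields a smooth foliation $\mathcal F_m$ on $P_m := \mathbb P(E_m)$ of rank $\dim Z$, everywhere transverse to the projection $\pi_m\colon P_m\to Z$, and $\mathcal O_{P_m}(1)$ inherits a partial $\mathcal F_m$-connection.

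The key step is to show that the natural embedding $j\colon Y\hookrightarrow P_m$ (which exists for $m$ sufficiently large and divisible, because $h$ is the relative ample model and, by the canonical bundle formula, $(\pi_Y)_\ast\mathcal O_Y(ml(K_{Y/Z}+B_Y+\bM Y.))=E_m$) is $\mathcal F_m$-invariant, and that $B_Y+D$ is $\mathcal F_Y$-invariant, where $\mathcal F_Y := \mathcal F_m|_Y$. For $Y$ itself: the homogeneous ideal of $Y\subset P_m$ is generated, in degree $k$, by $\ker\bigl(\mathrm{Sym}^k E_m\twoheadrightarrow E_{km}\bigr)$; both source and target are numerically flat, hence the surjection is compatible with the canonical connections and its kernel is a flat subbundle. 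Applying Lemma~\ref{lem_partial_connection_invariant} to the generators of this kernel, viewed as sections of symmetric powers of $\mathcal O_{P_m}(1)$, gives the $\mathcal F_m$-invariance of $Y$, and restricting produces the foliation $\mathcal F_Y$ of item~(1). For the divisor $B_Y+D$: after clearing denominators, a section cutting out a multiple of $B_Y+D$ produces a section of some flat line bundle on $Y$ (its pushforward to $Z$ lies inside a suitable $E_{m'}$ tensored with a numerically trivial, hence flat, line bundle on $Z$); again by compatibility with the flat structure this section is $\nabla$-flat, and Lemma~\ref{lem_partial_connection_invariant} shows $B_Y+D$ is $\mathcal F_Y$-invariant. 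This is the step I expect to be the most delicate, since one must carefully choose the line bundle and relate effective representatives of the moduli $\bM.$ to flat data.

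Item~(2) is then immediate from Lemma~\ref{lem_transverse_to_locally trivial} applied to $\mathcal F_Y$, with the invariant subvariety taken to be (the support of) $B_Y+D$, yielding analytic-local products $\pi_Y^{-1}(U)\cong Y_0\times U$ carrying the product pair structure. For item~(3), observe that the general fibre $(Y_0,(B_Y+D)_0)$ is slc and polarized by the $\pi_Y$-ample divisor $K_{Y/Z}+B_Y+D$, so by standard finiteness of automorphisms of polarized slc pairs its automorphism group $G$ is finite. The local triviality of~(2) produces a monodromy representation $\rho\colon\pi_1(Z)\to G$; the finite \'etale cover $Z'\to Z$ associated to $\ker\rho$ kills the monodromy and, by a standard descent argument, trivialises the family, giving $(Y,B_Y+D)\times_Z Z'\cong (Y_0,(B_Y+D)_0)\times Z'$ as desired.
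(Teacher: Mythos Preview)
Your overall architecture matches the paper's proof closely: Simpson-type flat connections on the $E_m$, the foliation on $\mathbb P(E_m)$ via Lemma~\ref{lem_flatconnsummary}, invariance of $Y$ from flatness of $\ker(\mathrm{Sym}^kE_m\to E_{km})$, then Lemma~\ref{lem_transverse_to_locally trivial} for item~(2) and finiteness of automorphisms for item~(3). All of this is essentially what the paper does.

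The one place where your sketch does not go through as written is exactly the step you flag as delicate, the $\mathcal F_Y$-invariance of $B_Y+D$. Your claim that the pushforward of $\mathcal O_Y(N(B_Y+D))$ ``lies inside a suitable $E_{m'}$ tensored with a numerically trivial line bundle on $Z$'' is not correct: since $N(B_Y+D)\sim_{\mathbb Q} NL-NK_{Y/Z}$ and $K_{Y/Z}$ is not pulled back from $Z$, there is no such identification, and in any case $N(B_Y+D)$ need not be Cartier on the singular variety $Y$. So you cannot directly feed the section $s$ into the Simpson/Ou compatibility machine.

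The paper's fix is more elaborate than your sketch suggests. One first passes to a small $\mathbb Q$-factorialisation and then to the relative ample model $\sigma\colon(\overline Y,\overline B_Y+\overline D)\to Y$ of $K_Y+(1+\eta)(B_Y+D)$, so that $\overline B_Y+\overline D$ becomes $\sigma$-ample and Cartier issues disappear. The foliation lifts to $\overline{\mathcal F}_Y$ with $K_{\overline{\mathcal F}_Y}=\sigma^*K_{\mathcal F_Y}$, and the Bott connection then gives a partial $\overline{\mathcal F}_Y$-connection on $K_{\overline Y/Z}$; combined with the partial connection on $\sigma^*L$ this puts a partial connection on $\mathcal O_{\overline Y}(m(K_{\overline Y/Z}+\overline B_Y+\overline D)+k(\overline B_Y+\overline D))$ for $m\gg k\gg 0$. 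Pushing down via Lemma~\ref{lem_flatconnsummary}(3) shows $\tilde E_m\coloneqq \bar p_*$ of this line bundle carries a holomorphic connection, hence is numerically flat, and only then does Simpson's construction apply to it. Multiplication by the section $s$ cutting out $k(\overline B_Y+\overline D)$ gives a map $\varphi\colon E_m\to\tilde E_m$ of numerically flat bundles, which by \cite[Corollary 3.9]{Ou21} is flat; this is what makes $\{s=0\}$ invariant for the foliation built from $\tilde E_m$, and compatibility forces that foliation to agree with $\sigma_*$ of the original $\mathcal F_Y$. In short, your high-level idea (invariance via flat morphisms between numerically flat bundles) is right, but making the target bundle numerically flat requires the Bott connection on $K_{\overline Y/Z}$ and the passage to $\overline Y$, not an embedding into some $E_{m'}$.
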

\begin{proof}

{\bf Step 1.} In this step we construct a foliation. 

For $m \gg 0$ and sufficiently divisible, 
we have an embedding $Y \rightarrow P  \xrightarrow{p} Z$, where $P\coloneqq \mathbb P(E_m)$.  
Set $L = \mathcal O_P(1) \vert_Y$.
The connection $\nabla_m$ on $E_m$ guaranteed to exist
by \cite[\S~3]{Simpson92} is in fact a holomorphic connection 
(see \cite{MR4376097} or  \cite[Theorem 3.5]{Ou21})
and therefore by Lemma \ref{lem_flatconnsummary} gives a smooth foliation $\mathcal H$ on $P$
such that $T_{\mathcal H} \cong p^*T_Z$, in particular $T_{\mathcal H}$ is locally free.
Observe that $K_{\mathcal H} \cong p^*K_Z$.
Moreover, $\mathcal H$ is everywhere transverse to 
the fibres of $P \rightarrow Z$.

It follows from \cite[Corollary 3.10]{Ou21} that for all $k>0$ the kernel of 
$\Sym^kE_m \rightarrow E_{mk} \rightarrow 0$
is a
$\Sym^k\nabla_m$-invariant subbundle, and therefore  
$Y$ is invariant by $\mathcal H$.
We may then restrict $\mathcal H$ to $Y$ to get a foliation $\mathcal F_Y$ on $Y$.

	{\bf Step 2. } In this step we show local triviality of $(Y, B_Y+D) \rightarrow Z$. 

By considering $Y$ as a subscheme of $P$ which is invariant under $\mathcal H$ and applying Lemma 
\ref{lem_transverse_to_locally trivial} we see that $Y \rightarrow Z$ is locally trivial.
It remains to show that $(Y, B_Y+D)$ is locally trivial.  This will follow by another application 
of Lemma \ref{lem_transverse_to_locally trivial} by considering ${\rm Supp}(B_Y+D)$ as a subscheme of $P$
if we can show ${\rm Supp}(B_Y+D)$ is $\mathcal F_Y$-invariant (and therefore $\mathcal H$ invariant as a subscheme of $P$).

We may find $\tilde{D} \sim_{\mathbb Q} D$ such that ${\rm Supp}(D) \subset {\rm Supp}(\tilde{D})$ and such that 
$(Y, B_Y+\tilde{D})$ is klt.   It suffices to prove all the claims of the Lemma for $(Y, B_Y+\tilde{D})$, so 
without loss of generality we may freely replace $D$ by $\tilde{D}$ and so may assume that $(Y, B_Y+D)$ is klt.
Let $s\colon (Y', B_Y'+D') \rightarrow (Y, B_Y+D)$ be a 
small $\mathbb Q$-factorialisation, which exists by \cite[Corollary 1.4.4]{BCHM06},
and let 
$\sigma\colon (\overline{Y}, \overline{B}_Y+\overline{D}) \rightarrow (Y, B_Y+D)$
be the ample model of $K_{Y'}+(1+\eta)(B'_Y+D')$ over $Y$ for $0<\eta \ll 1$, which exists 
by \cite[Theorem 1.2]{BCHM06}.
Note that $\overline{B}_Y+\overline{D}$ is $\sigma$-ample by construction.

Set $\overline{\mathcal F}_Y = \sigma^{-1}\mathcal F_Y$ and 
note that since $\sigma$ is small, $K_{\overline{\mathcal F}_Y} = \sigma^*K_{\mathcal F_Y}$.
By Lemma~\ref{lem_flatconnsummary} 
$\mathcal O_{\overline{Y}}(m(K_{\overline Y/Z}+\overline{B}_Y+\overline{D}))$ admits a partial 
$\overline{\mathcal F}_Y$-connection.

By the Bott partial connection we know that 
$\det N^*_{\overline{\mathcal F}_Y} \cong K_{\overline{Y}} - 
K_{\overline{\mathcal F}_Y} = K_{\overline{Y}/Z}$ 
also admits a partial connection.
It follows that $\mathcal O_{\overline{Y}}(m(\overline{B}_Y+\overline{D}))$ admits a partial $\overline{\mathcal F}_Y$-connection,
and so 
$\mathcal O_{\overline{Y}}(m(K_{\overline Y/Z}+\overline{B}_Y+D)+k(\overline{B}_Y+\overline{D}))$ 
admits a partial
$\overline{\mathcal F}_Y$-connection for all $m \gg k \gg 0$.

By Lemma \ref{lem_flatconnsummary}, 
$\tilde{E}_m \coloneqq p_*\mathcal O_{\overline{Y}}(m(K_{\overline Y/Z}+\overline{B}_Y+\overline{D})+k(\overline{B}_Y+\overline{D}))$
is a vector bundle which admits a holomorphic connection.  It follows that $\det \tilde{E}_m$ admits a holomorphic connection and so $c_1(\tilde{E}_m) = 0$.
By \cite[Corollary 4.5]{MR3294622} (see also \cite[Theorem 1.11]{MR3779955})  $\tilde{E}_m$ is nef. Since $\tilde{E}_m$ is nef and $-c_1( \tilde{E}_m)$ is nef we conclude that $\tilde{E}_m$ numerically flat.  
Let $\tilde{\nabla}_m$ be the connection guaranteed to exist by  
\cite[\S~3]{Simpson92}.  Let $s \in H^0(\overline{Y}, \mathcal O_{\overline{Y}}(k(\overline{B}_Y+\overline{D}))$ be a 
global section such that $\{s = 0\} = k(\overline{B}_Y+\overline{D})$, and let $\varphi\colon E_m \rightarrow \tilde{E}_m$
be the morphism induced by multiplication by $s$.  By the proof of \cite[Corollary 3.10]{Ou21}
we see that $\varphi$ is a morphism of vector bundles equipped with connections, i.e., if $t$ is any flat local section
of $E_m$, we see that $t\otimes s$ is a flat local section of $\tilde{E}_m$.

Without loss of generality 
we may assume that $m \gg k \gg 0$ are sufficiently large 
so that $m(K_{\overline Y/Z}+\overline{B}_Y+\overline{D})+k(\overline{B}_Y+\overline{D})$
is $\bar{p}$-very ample.  Let $\tilde{P} \coloneqq \mathbb P(\tilde{E}_m)$ and by repeating the 
construction of Step 1, we produce a foliation $\tilde{\mathcal F}$ on $\overline{Y}$ which is
everywhere transverse to the fibres of $\overline{Y} \rightarrow Z$.  
Let us continue to denote by $\tilde{\nabla}_m$ the $\tilde{\mathcal F}$-partial connection on 
$m(K_{\overline Y/Z}+\overline{B}_Y+\overline{D})+k(\overline{B}_Y+\overline{D})$ guaranteed by Lemma 
\ref{lem_flatconnsummary}.  As noted, if $t$ is any flat local
section of $E_m$, then $\tilde{\nabla}_m(t\otimes s) = 0$, and so by 
Lemma \ref{lem_partial_connection_invariant} it follows that $\{t\otimes s = 0\}$ is invariant, from 
which we may conclude that ${\rm Supp}(\overline{B}_Y+\overline{D}) = {\rm Supp}(\{s = 0\})$ is invariant.

Since $\varphi$ was a morphism of vector bundles equipped with connections it follows that 
$\mathcal F_Y = \sigma_*\tilde{\mathcal F}$, and so $\mathcal F_Y$ leaves $B_Y+D$ invariant. 
\medskip

Our final claim follows by first observing that $(Y_0, (B_Y+D)_0)$
is klt.  We then recall that the automorphism group of a log canonical pair of general type
is finite, and so our locally trivial family is in fact trivial after a finite \'etale base change.
\end{proof}

We remark that the use of \cite{Simpson92} in the above proof is closely related to a similar application in 
\cite{MR3959071}.  The use of \cite{Simpson92} to produce a foliation transverse to a fibration
was also considered in \cite{Ou21}.

\begin{proposition}
\label{prop_lift}
Let $f\colon (X, B) \rightarrow Z$ be a projective fibration over a 
smooth projective curve $Z$, where $B \geq 0$,  $(X, B)$
is klt and $K_X+B$ is $f$-semi-ample.

Suppose that for all $m \gg 0$ sufficiently divisible 
$E_m \coloneqq f_*\mathcal O_X(ml(K_{X/Z}+B))$ is locally free
and numerically flat.

Let notation be as in Lemma \ref{lem_flat_to_isotrivial}.
\begin{enumerate}
\item There exists a resolution of singularities $(\overline{X}, \overline{B}) \rightarrow (X, B)$ such that 
the foliation $\mathcal F_Y$ lifts to a foliation $\mathcal F_{\overline{X}}$ on $\overline{X}$.
Moreover, there is a Zariski open subset $U \subset Y$ such that for $y \in U$ 
\[T_{\mathcal F_{\overline{X}}}\vert_{\overline{X}_y} \to N_{\overline{X}_y/\overline{X}}\] is a subbundle
and $\mathcal F_{\overline{X}}$ 
leaves $\overline{B} \cap V$ 
invariant.  

\item For general $t, t' \in Z$, 
there exists an isomorphism $\psi\colon Y_t \rightarrow Y_{t'}$ 
such that if $(\overline{X}_s, \overline{B}_s)$ denotes
the fibre of $(\overline{X}, \overline{B}) \rightarrow Y$ over $s$, then for general $s \in Y_t$, 
$(\overline{X}_s, \overline{B}_s)
\cong (\overline{X}_{\psi(s)}, \overline{B}_{\psi(s)})$.

\item For general $t, t' \in Z$ the geometric generic fibres of $(X_t, B_t) \rightarrow Y_t \cong Y_0$ and  
$(X_{t'}, B_{t'}) \rightarrow Y_{t'} \cong Y_0$  are isomorphic, in particular, 
there exists a finite cover $\tilde{Y}_0 \rightarrow Y_0$ 
(perhaps depending on $t$ and $t'$) such that  
$(X_t, B_t)\times_{Y_0}\tilde{Y}_0$ and $(X_{t'}, B_{t'})\times_{Y_0}\tilde{Y}_0$ are birational.

\item If $y \in Y$ is a general point and $L$ is the leaf of $\mathcal F_Y$ passing through $y$, 
then the family $(X, B) \times_Y L \rightarrow L$ is isotrivial.

\end{enumerate}
\end{proposition}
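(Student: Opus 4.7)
The plan is to extend the construction of Lemma~\ref{lem_flat_to_isotrivial} from the ample model $Y$ to a suitable resolution $\overline{X}$ of $X$ by iterating the Simpson partial-connection argument on a larger projective bundle over $Z$. First I would take an equivariant log resolution $\pi \colon \overline{X} \to X$ of $(X,B)$ which also resolves the indeterminacies of the induced map $X \dashrightarrow Y$, so that the composition $\overline{X} \to Y$ is a morphism that is flat over a big open $U \subset Y$ contained in the smooth locus of $Y$.

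To establish item (1), I would adapt Step~3 of the proof of Lemma~\ref{lem_flat_to_isotrivial}. Consider the pushforward $\tilde{E}_m \coloneqq (f \circ \pi)_{\ast} \mathcal{O}_{\overline{X}}\bigl(m(K_{\overline{X}/Z}+\overline{B})+k\overline{B}\bigr)$ for $m \gg k \gg 0$; the canonical section vanishing along $k\overline{B}$ should yield a morphism $E_m \to \tilde{E}_m$ compatible with Simpson's connections, showing $\tilde{E}_m$ is numerically flat and carries a holomorphic connection. The associated projective bundle $\mathbb{P}(\tilde{E}_m) \to Z$ then carries a smooth rank $\dim Z$ foliation $\tilde{\mathcal{H}}$ transverse to its fibres by Lemma~\ref{lem_flatconnsummary}, and for $m$ sufficiently divisible the very-ample embedding $\overline{X} \hookrightarrow \mathbb{P}(\tilde{E}_m)$ is $\tilde{\mathcal{H}}$-invariant by the invariance argument from Lemma~\ref{lem_flat_to_isotrivial}. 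Setting $\mathcal{F}_{\overline{X}} \coloneqq \tilde{\mathcal{H}}|_{\overline{X}}$ would give a smooth foliation on $\overline{X}$ transverse to the fibres of $V \to U$, and invariance of $\overline{B}$ under $\mathcal{F}_{\overline{X}}$ would follow from Lemma~\ref{lem_partial_connection_invariant} applied to the canonical section of $\mathcal{O}_{\overline{X}}(k\overline{B})$, whose annihilation under the partial connection comes for free from the compatibility of $E_m \to \tilde{E}_m$ with connections.

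For items (2)--(4), once $\mathcal{F}_{\overline{X}}$ is in place, I would apply Lemma~\ref{lem_transverse_to_locally trivial} to $(V, \overline{B} \cap V) \to U$ to produce analytic local product structures in which $\mathcal{F}_{\overline{X}}$ becomes projection to the second factor. Since $\mathcal{F}_{\overline{X}}$ projects under $\pi$ to $\mathcal{F}_Y$, tracing a path in $Z$ from $t$ to $t'$ along a leaf of $\mathcal{F}_Y$ recovers the isomorphism $\psi \colon Y_t \to Y_{t'}$ from Lemma~\ref{lem_flat_to_isotrivial}, and the invariant product structure upstairs automatically carries the fibres $(\overline{X}_s, \overline{B}_s)$ to $(\overline{X}_{\psi(s)}, \overline{B}_{\psi(s)})$ for $s$ in a dense open of $Y_t$, giving item (2). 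Item (3) is then immediate since the geometric generic fibres of $X \to Y$ and of $\overline{X} \to Y$ are birational after a suitable finite base change. For item (4), the local product structures along a leaf $L$ of $\mathcal{F}_Y$ glue globally by the properness of $(X,B) \times_Y L \to L$ together with the \'etale-cover trivialisation from Lemma~\ref{lem_flat_to_isotrivial}.

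The main difficulty will be the construction of $\tilde{E}_m$ and of a morphism $E_m \to \tilde{E}_m$ compatible with Simpson's connection on the resolution $\overline{X}$ rather than on the birational model $\overline{Y}$ used in Lemma~\ref{lem_flat_to_isotrivial}. Since $\overline{B}$ is not literally the pullback of $B_Y+D$ --- the exceptional divisors of $\pi$ and of the resolution of $X \dashrightarrow Y$ contribute nontrivially to it --- one must verify that these exceptional contributions can be absorbed into the twist by $k\overline{B}$ without spoiling the compatibility of $E_m \to \tilde{E}_m$ with the Simpson connections, and in particular that the embedded image of $\overline{X}$ in $\mathbb{P}(\tilde{E}_m)$ really does lie in a $\tilde{\mathcal{H}}$-invariant sublocus.
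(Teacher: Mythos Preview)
Your approach has a genuine gap. The argument in Step~3 of Lemma~\ref{lem_flat_to_isotrivial} that you are trying to imitate proves numerical flatness of $\tilde{E}_m$ by first equipping the line bundle on $\overline{Y}$ with a partial $\overline{\mathcal{F}}_Y$-connection and then pushing forward via Lemma~\ref{lem_flatconnsummary}(3). That works because $\overline{Y}\to Y$ is \emph{birational}, so $\mathcal{F}_Y$ transforms to a foliation $\overline{\mathcal{F}}_Y$ on $\overline{Y}$ for free. In your setting $\overline{X}\to Y$ has positive relative dimension, and there is no foliation on $\overline{X}$ available yet --- constructing one is precisely the content of item~(1). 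So you cannot argue that $\mathcal{O}_{\overline{X}}(m(K_{\overline{X}/Z}+\overline{B})+k\overline{B})$ carries a partial connection, and hence you have no mechanism for showing that $\tilde{E}_m$ is numerically flat; a mere morphism $E_m\to\tilde{E}_m$ from a numerically flat bundle does not force the target to be numerically flat. Moreover, even granting numerical flatness, the line bundle $m(K_{\overline{X}/Z}+\overline{B})+k\overline{B}$ need not be $f\circ\pi$-very ample: $K_{X/Z}+B$ is only $f$-semi-ample with ample model $Y$, so it is trivial on the positive-dimensional fibres of $\overline{X}\to Y$, and there is no assumption that $\overline{B}$ is big on those fibres. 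Thus there is no embedding $\overline{X}\hookrightarrow\mathbb{P}(\tilde{E}_m)$ to restrict $\tilde{\mathcal{H}}$ from.

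The paper bypasses both obstacles by an entirely different mechanism: it invokes Ambro's description of the moduli part, whereby a sublinear system of $|k\bM Y.|$ defines a rational map $\phi\colon Y\dashrightarrow W$ whose differential at the generic point has kernel equal to that of the Kodaira--Spencer map $\kappa\colon T_{Y,\eta}\to H^1(T_{\overline{X}_\eta}(-\log\overline{B}_\eta))$ for a suitable log smooth model $(\overline{X},\overline{B})$. Since every $D\sim_{\mathbb Q}\bM Y.$ is $\mathcal{F}_Y$-invariant by Lemma~\ref{lem_flat_to_isotrivial}, the fibres of $\phi$ are $\mathcal{F}_Y$-invariant, so $T_{\mathcal{F}_Y}\subset\ker\kappa$ generically. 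Vectors in $\ker\kappa$ lift canonically along $\bar h\colon\overline{X}\to Y$ to sections of $T_{\overline{X}}(-\log\overline{B})$, and this lift $\bar h^*T_{\mathcal{F}_Y}\to T_{\overline{X}}(-\log\overline{B})$ \emph{is} the foliation $\mathcal{F}_{\overline{X}}$. Transversality and invariance of $\overline{B}$ over an open $U\subset Y$ are then immediate from the fact that the lift lands in the logarithmic tangent bundle. Items~(2)--(4) then follow as you sketch, via Lemma~\ref{lem_transverse_to_locally trivial} and an Isom-scheme argument.
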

\begin{proof}
Following \cite[\S~2]{Ambro05}, we may find a generically finite morphism 
$\sigma\colon \widetilde{Y} \rightarrow Y$, a morphism $\tau\colon \widetilde{Y} \rightarrow Y^!$ and a big and nef divisor
$\bM Y^!.$ such that $\overline{\tau^*\bM Y^!.} = \sigma^* \bM .$ as equality of b-divisors.
Notice that we use b-divisors to express this identity, since $\bM.$ may not descend onto $Y$, 
and so $\bM.$ may not agree with the Cartier closure of $\bM Y.$.
It follows that for some $k \gg 0$ some sublinear system of 
$~|k \bM Y.|~$ 
defines
a rational map $\phi\colon Y \dashrightarrow W$ such that, at the generic point $\eta$ of $Y$,
	$~\ker {\rm d}\phi_{\eta}$ agrees with $\ker \kappa$. Here ${\rm d}\phi_{\eta}$ is the differential 
	$T_{Y, \eta} \rightarrow (f^*T_W)_\eta$ restricted to the generic fibre, and 
$\kappa\colon T_{Y, \eta} \rightarrow H^1(T_{\overline{X}_\eta}(-\log \overline{B}))$ is the Kodaira--Spencer map, see \cite[Theorem 2.2]{Ambro05}, and 
$(\overline{X}, \overline{B}) \rightarrow (X, B)$
is a log smooth model as guaranteed by \cite[Lemma 1.1]{Ambro05}.
Observe that we may not take the full linear series $~|k \bM Y.|~$, since $\bM.$ may not descend onto $Y$.
Let $\bar{h}\colon \overline{X} \rightarrow Y$
be the natural morphism.

The general fibre of $Y \dashrightarrow W$ is generically an intersection of divisors 
$D \sim k \bM Y.$.
By Lemma \ref{lem_flat_to_isotrivial}, these divisors are all $\mathcal F_Y$
invariant, and so the general fibre is $\mathcal F_Y$ invariant.It follows that at the generic
point of $Y$, $T_{\mathcal F_Y}$ is 
contained in the kernel of the Kodaira--Spencer morphism
$T_{Y} \rightarrow R^1\bar{h}_*T_{\overline{X}/Y}(-\log \overline{B})$,
	as the latter generically coincides with $\ker {\rm d}\phi$.
Thus, at the generic point of $Y$ we have a lift
$\rho\colon h^*T_{\mathcal F_Y} \rightarrow T_{\overline{X}}(-\log \overline{B})$, 
which induces a foliation $\mathcal F_{\overline{X}}$ on $\overline{X}$. Let $U$ be the largest open subset
over which we have a lift $h^*T_{\mathcal F_Y} \rightarrow T_{\overline{X}}(-\log \overline{B})$. 

By construction $T_{\mathcal F_{\overline{X}}}\vert_{\overline{X}_y} \to N_{\overline{X}_y/\overline{X}}$ is a subbundle where $\overline{X}_y$ is a fibre of $\overline{X} \rightarrow Y$ over $U$. Since $\rho$ takes values in 
$T_{\overline{X}}(-\log \overline{B})$ we see that $\mathcal F_{\overline{X}}$ leaves
the support of $\overline{B}$ invariant.
This proves item (1).

We now prove item (2).
Fix a general point $t \in Z$,
and let $t' \in Z$ be a nearby point.  
By Lemma~\ref{lem_flat_to_isotrivial}, we have isomorphisms $\psi\colon Y_t \rightarrow  Y_{t'}$ 
given by the flows of the foliation $\mathcal F_Y$. 
Perhaps shrinking $U$, we may assume this gives an isomorphism $\psi\colon U_t \rightarrow U_{t'}$.

For any closed point $s \in U_t$, let $L$ denote a germ of a leaf of $\mathcal F_Y$ through $s$, 
which up to shrinking, we may assume is contained in $U$.
By construction, we have $L \cap U_{t'} = \psi(s)$.
Consider the family $(\overline{X}_L, \overline{B}_L) \coloneqq (\overline{X}, \overline{B}) \times_Y L$.  
By construction, $\overline{X}_L$ is $\mathcal F_{\overline{X}}$-invariant, and so we have a restricted foliation 
$\mathcal F_{\overline{X}_L}$ which
is everywhere transverse to the fibres of $\overline{X} \rightarrow L$ and leaves $B$ invariant.  
Applying Lemma~\ref{lem_transverse_to_locally trivial} to $\mathcal F_{\overline{X}_L}$ 
and $(\overline{X}_L, \overline{B}_L) \rightarrow L$, 
it follows that $(\overline{X}_s, \overline{B}_s) \cong (\overline{X}_{\psi(s)}, \overline{B}_{\psi(s)})$, as required.

By considering the relative ${\rm Isom}$ functor ${\rm Isom}_{Y_0}((X_{t}, B_{t}), (X_{t'}, B_{t'}))$
for general $t, t'$,
we see that standard arguments imply 
the geometric generic fibres $(X_t, B_t) \rightarrow Y_0$  and $(X_{t'}, B_{t'}) \rightarrow Y_0$  are isomorphic, from which we may conclude 
item (3).

Item (4) is a direct consequence of the previous arguments.  We remark that by the last point in  
Lemma \ref{lem_flat_to_isotrivial} the leaves of $\mathcal F_Y$ are all algebraic.
\end{proof}

\subsection{Harder--Narasimhan filtrations}

\label{s_HN}

Given a vector bundle $E$ on a smooth projective curve we define its slope to be
$\mu(E) \coloneqq \frac{\deg E}{\rk E}$.
We define $\mu_{\max}(E)$ to be the largest slope $\mu(F)$ among all non-zero
 subbundles $0 \rightarrow F \rightarrow E$,
and we define $\mu_{\min}(E)$ to be the smallest slope $\mu(Q)$ among non-zero quotient bundles $E \rightarrow Q \rightarrow 0$.

\begin{definition}
\label{defn_lambda}
Let $f\colon (X, B) \rightarrow Z$ be a projective fibration to a
smooth projective curve, where $B \geq 0$, $(X, B)$
is log canonical.
Let $l$ be a positive integer such that $l(K_{X/Z}+B)$ is Cartier.
Let $P \in Z$ be a closed point.
We define 
\[\lambda_-(X/Z, B) \coloneqq \sup \{ t : l(K_{X/Z}+B)-tf^*P \text{ is nef}\}\]
and 
\[\lambda_+(X/Z, B) \coloneqq \sup \{ t : l(K_{X/Z}+B)-tf^*P \text{ is pseudo-effective}\}.\]

When clear from context we will write $\lambda_- = \lambda_-(X/Z, B)$ and $\lambda_+ = \lambda_+(X/Z, B)$.
\end{definition}

We remark that $\lambda_-$ and $\lambda_+$ depend on the choice of $l$.  We feel this is unlikely to cause
confusion and so do not include the choice of $l$ in our notation.

\begin{proposition}
\label{prop_threshold_to_slope}
Let $f\colon (X, B) \rightarrow Z$ be a GLC fibration where $B \geq 0$,  
$(X, B)$ is log canonical and klt over the generic point of $Z$, 
$Z$ is a projective curve and $B \geq 0$.
Let $l>0$ be a sufficiently divisible integer such that $L \coloneqq l(K_{X/Z}+B)$ is Cartier
and suppose that for all $m \gg 0$ we have that $E_m \coloneqq f_*\mathcal O_X(ml(K_{X/Z}+B))$ is locally free.
Suppose in addition that $K_{X/Z}+B$ is relatively
semi-ample.
Then, for $m \gg 0$, we have
\[
\lambda_- \leq \frac{1}{m}\mu_{{\rm min}}(E_m) \leq \frac{1}{m}\mu_{{\rm max}}(E_m) \leq \lambda_+.
\]
\end{proposition}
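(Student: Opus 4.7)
The plan is to prove the two outer inequalities separately; the middle inequality $\mu_{{\rm min}}(E_m) \leq \mu_{{\rm max}}(E_m)$ is immediate from the definitions. Setting $L \coloneqq l(K_{X/Z}+B)$, the lower bound will follow from a semi-positivity theorem that converts nefness of $L - tf^*P$ into nefness of the vector bundle $E_m \otimes \mathcal{O}_Z(-mtP)$. The upper bound, in contrast, will be obtained by propagating a large-slope subbundle of $E_m$ to global sections of a positive multiple of $L - tf^*P$, via ampleness of positive-slope semistable bundles on curves together with surjectivity of multiplication maps.

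For the lower bound, fix a rational $t$ with $L - tf^*P$ nef, and replace $m$ by a multiple so that $mt \in \mathbb{Z}$. Since $L$ is $f$-semi-ample and $f^*P$ is $f$-trivial, $m(L - tf^*P)$ is simultaneously nef and $f$-semi-ample. Because $(X,B)$ is log canonical and klt over the generic point of $Z$, a suitable Fujino--Kawamata semi-positivity theorem applies to give that $f_*\mathcal{O}(m(L - tf^*P))$ is a nef vector bundle on the smooth projective curve $Z$. By the projection formula this sheaf is $E_m \otimes \mathcal{O}_Z(-mtP)$, and nef vector bundles on a curve have $\mu_{{\rm min}} \geq 0$, so $\mu_{{\rm min}}(E_m) \geq mt$. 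Letting $t$ approach $\lambda_-$ through rationals of this form yields $\mu_{{\rm min}}(E_m)/m \geq \lambda_-$.

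For the upper bound, suppose $\mu_{{\rm max}}(E_m)/m > t$ for some rational $t$ with $mt \in \mathbb{Z}$, and let $F \subset E_m$ be the maximal destabilising subbundle from the Harder--Narasimhan filtration, so that $F$ is semistable of slope strictly greater than $mt$. Then $F \otimes \mathcal{O}_Z(-mtP)$ is semistable of positive slope, hence ample by Hartshorne's criterion for vector bundles on curves, and in particular $\operatorname{Sym}^k F \otimes \mathcal{O}_Z(-kmtP)$ has non-zero global sections for $k \gg 0$. On the other hand, once $m$ is sufficiently divisible, the relative ample model $g \colon Y \to Z$ of $K_{X/Z}+B$ realises $E_m$ as $g_*\mathcal{O}_Y(mH)$ for some $g$-very ample Cartier divisor $H$, and Serre vanishing provides the surjection $\operatorname{Sym}^k E_m \twoheadrightarrow E_{km}$ for $k \gg 0$. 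Composing $\operatorname{Sym}^k F \hookrightarrow \operatorname{Sym}^k E_m$ with this surjection produces a non-zero section of $E_{km} \otimes \mathcal{O}_Z(-kmtP) \cong f_*\mathcal{O}(km(L - tf^*P))$, so $km(L - tf^*P)$ is effective; hence $t \leq \lambda_+$, and letting $t$ approach $\mu_{{\rm max}}(E_m)/m$ from below gives $\mu_{{\rm max}}(E_m)/m \leq \lambda_+$.

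The principal technical obstacle is invoking semi-positivity in the stated generality: one needs a version valid for a log canonical pair that is only klt over the generic point of $Z$ and that tolerates the twist by $-tf^*P$. This is most naturally handled by passing to the relative ample model of $(X,B)$ over $Z$, where the adjoint divisor becomes $g$-ample, and then applying Fujino's refinements of Kawamata--Viehweg semi-positivity; the surjectivity of the multiplication maps $\operatorname{Sym}^k E_m \twoheadrightarrow E_{km}$ used for the upper bound is then standard on this model.
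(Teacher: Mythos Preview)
The paper disposes of this by citation: for the $f$-ample case it invokes \cite[Lemma--Definition 2.26]{XZ20} (cf.\ \cite[Propositions 5.4, 6.4]{CoPa21}), where these threshold-versus-slope inequalities are established for an arbitrary $f$-ample line bundle on a family over a curve, and then reduces the $f$-semi-ample case to this by passing to the relative ample model $g\colon Y \to Z$ and using Ambro's canonical bundle formula to realise $L'$ as an adjoint divisor $l(K_{Y/Z}+B_Y)$ there. Your reduction to the ample model is the same idea, but you then attempt to reprove the content of those citations from scratch.

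Your upper-bound argument is essentially correct, with one small gap: the composite $\operatorname{Sym}^k F \hookrightarrow \operatorname{Sym}^k E_m \twoheadrightarrow E_{km}$ is only nonzero as a sheaf map, not injective, so merely knowing $H^0(\operatorname{Sym}^k F \otimes \mathcal O_Z(-kmtP)) \neq 0$ does not immediately produce a nonzero section of $E_{km}\otimes \mathcal O_Z(-kmtP)$. The fix is easy: ampleness of $F\otimes\mathcal O_Z(-mtP)$ gives global generation of $\operatorname{Sym}^k F\otimes\mathcal O_Z(-kmtP)$ for $k \gg 0$, and since the fibrewise multiplication map is nonzero (because $s^k \neq 0$ for $s\neq 0$ on an integral fibre), a general global section survives.

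Your lower-bound argument, however, has a genuine gap. Semi-positivity theorems of Fujino--Kawamata type yield nefness of $f_*\mathcal O_X(D)$ when $D$ has adjoint form $K_{X/Z}+B+H$ with $H$ nef, or is a positive multiple of $K_{X/Z}+B$. Writing
\[
m(L - tf^*P) \;=\; (K_{X/Z}+B) + \tfrac{ml-1}{l}\Bigl(L - \tfrac{mlt}{ml-1}\,f^*P\Bigr),
\]
the required twist is nef only when $\tfrac{mlt}{ml-1} \leq \lambda_-$; since $\tfrac{mlt}{ml-1} > t$ whenever $t>0$, nefness of $L - tf^*P$ alone does not suffice, and pushing $t \to \lambda_-$ forces $m \to \infty$. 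So there is no off-the-shelf ``suitable Fujino--Kawamata semi-positivity theorem'' giving the stated inequality for a fixed large $m$; at best this line yields $\liminf_m \mu_{\min}(E_m)/m \geq \lambda_-$. The references the paper cites argue instead through the embedding of the ample model $Y \hookrightarrow \mathbb P(E_m)$ and a direct comparison of nef and pseudo-effective cones, which your sketch does not supply.
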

\begin{proof}
Suppose first that $K_{X/Z}+B$ is relatively ample.  In this case the result 
follows from \cite[Lemma-Definition 2.26]{XZ20} (see also 
\cite[Proposition 5.4]{CoPa21} and \cite[Proposition 6.4]{CoPa21}) by taking $L = l(K_{X/Z}+B)$.

In general, let $g\colon X \rightarrow Y$ be the ample model of $K_{X/Z}+B$ over $Z$, and, perhaps replacing $l$ by a multiple 
we may find a Cartier divisor $L'$ be such that $g^*L' \sim L$.
By \cite[Theorem 0.2]{Ambro05}, we may write $L' \sim_{\mathbb Q} K_{Y/Z}+B_Y$ 
where $(Y, B_Y)$ is klt over the generic point of $Z$.
We have equalities $\lambda_+(X/Z, B) = \lambda_+(Y/Z, B_Y)$ and $\lambda_-(X/Z, B) = \lambda_-(Y/Z, B_Y)$
and so we may freely reduce to the previous case
in order to conclude.
\end{proof}


\section{Property $(*)$ pairs and locally stable families}

\begin{lemma}
\label{lem_mod_part_stable}
Let $f\colon (X, B) \rightarrow Z$ be a GLC contraction which is also a 
locally stable family of log varieties.
The moduli part of $(X/Z, B)$
is $K_{X/Z}+B$. 
\end{lemma}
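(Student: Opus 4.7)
The plan is to reduce the assertion to the vanishing of the discriminant. By definition of the moduli part,
\[
M_X = (K_X + B) - f^*(K_Z + B_Z) = K_{X/Z} + B - f^* B_Z,
\]
where $B_Z$ is the discriminant of $(X/Z, B)$. So the equality $M_X = K_{X/Z} + B$ is equivalent to $B_Z = 0$. Since $B_Z$ is supported on prime divisors of $Z$ and its coefficient along a prime $P \subset Z$ is $1 - t_P$, with $t_P$ the log canonical threshold of $f^* P$ with respect to $(X, B)$ computed at the generic point $\eta_P$, I only need to show $t_P = 1$ for every prime divisor $P \subset Z$.

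This is a codimension-one question on $Z$, so I would localize around $\eta_P$. Since $Z$ is normal, it is smooth in codimension one, and I can shrink $Z$ to a smooth open neighbourhood $U$ of $\eta_P$. By base change of locally stable families (\cite[Theorem 4.8]{Kollar21}, recorded after Definition~\ref{def_stable}), the restricted family $(X_U, B|_{X_U}) \to U$ remains locally stable. Because $U$ is smooth, Proposition~\ref{prop_kollar_criterion} applies: taking the SNC reduced divisor $D = P \cap U$, the pair $(X_U, B|_{X_U} + f^*(P \cap U))$ is slc, and in particular sub-lc over $\eta_P$, giving $t_P \geq 1$.

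For the reverse inequality, I note that any prime divisor $E \subset X$ dominating $P$ appears in $f^* P$ with some multiplicity $m_E \geq 1$, and with coefficient $b_E \geq 0$ in $B$. The coefficient of $E$ in $B + t f^* P$ equals $b_E + t m_E$, so the sub-lc condition forces $b_E + t m_E \leq 1$, hence $t \leq (1 - b_E)/m_E \leq 1$; thus $t_P \leq 1$. Combining both inequalities yields $t_P = 1$ for every prime $P \subset Z$, so $B_Z = 0$, and the lemma follows. The proof is a direct unpacking of the definitions of the discriminant and of local stability, with no serious obstacle; the only substantive input is Proposition~\ref{prop_kollar_criterion} together with the base-change property of locally stable families.
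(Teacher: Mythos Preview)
Your proof is correct and follows essentially the same route as the paper: both reduce to showing the discriminant $B_Z$ vanishes by invoking Proposition~\ref{prop_kollar_criterion}. The paper compresses this into a single sentence, whereas you spell out the localisation to the smooth locus of $Z$, the base change of the locally stable family, and the two inequalities $t_P \geq 1$ and $t_P \leq 1$; these are precisely the details the paper leaves implicit.
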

\begin{proof}
By Proposition \ref{prop_kollar_criterion}
it follows that the discriminant $B_{Z}$ of $(X/Z, B)$ 
is zero, and we may conclude.
\end{proof}

The following openness property of Property $(*)$ morphisms is implicit in \cite{ACSS}.

\begin{proposition}
\label{prop_property*_open}
Let $f\colon (X, B) \rightarrow Z$ be a GLC contraction where $B \geq 0$. 
Suppose that there exists a reduced divisor $\Sigma \subset Z$ such that 
$(Z, \Sigma)$ is log smooth and such that the vertical part of $B$ coincides with 
$f^{-1}(\Sigma)$.
Suppose moreover for all closed points $z \in Z$ there is a 
reduced divisor $D \subset Z$ such that $(Z, \Sigma+D)$ is log smooth, 
$z$ is a log canonical centre of $(Z, \Sigma+D)$, and $(X, B+f^*D)$ 
is log canonical in a neighbourhood of $f^{-1}(z)$.
Then $(X/Z, B)$ satisfies Property $(*)$.
\end{proposition}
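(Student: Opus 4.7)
The plan is to verify the two conditions of Property $(*)$ from Definition \ref{d_property*}. Condition (1) is immediate from the hypothesis, as the given divisor $\Sigma$ plays the role of $\Sigma_Z$, $(Z,\Sigma)$ is log smooth, and the vertical part of $B$ coincides with $f^{-1}(\Sigma)$.

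For condition (2), fix a closed point $z \in Z$ and a reduced divisor $\Sigma' \geq \Sigma$ with $(Z,\Sigma')$ log smooth near $z$; we must show $(X, B + f^{*}(\Sigma' - \Sigma))$ is log canonical in a neighbourhood of $f^{-1}(z)$. Let $D$ be the reduced divisor provided by the hypothesis, so that $(Z, \Sigma + D)$ is log smooth at $z$, $z$ is an LC centre, and $(X, B + f^{*}D)$ is LC near $f^{-1}(z)$. Since $z$ is a full-codimension LC stratum of the log smooth pair, we may choose analytic coordinates $y_1, \dots, y_n$ at $z$ so that, locally, $\Sigma = \{y_1 \cdots y_k = 0\}$ and $D = \{y_{k+1} \cdots y_n = 0\}$. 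The log smoothness of $(Z, \Sigma')$ at $z$ together with $\Sigma' \geq \Sigma$ implies that the components of $\Sigma' - \Sigma$ passing through $z$ are smooth, transverse to $\Sigma$, and at most $n-k$ in number. I would then construct a reduced divisor $D'$ with $D' \geq \Sigma' - \Sigma$ near $z$, such that $(Z, \Sigma + D')$ is log smooth at $z$ with $z$ again an LC stratum and $(X, B + f^{*}D')$ is LC near $f^{-1}(z)$; monotonicity of log canonicity under subtraction of an effective divisor would then conclude.

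The main obstacle is the last condition: upgrading the LC property of $(X, B + f^{*}D)$ to the same property for a different $D'$ with the same local stratum structure. My strategy would be to interpolate between $D$ and $D'$ through a family of reduced divisors $D_t$ all having $z$ as an LC stratum of $(Z, \Sigma + D_t)$, and to argue that the LC property of $(X, B + f^{*}D_t)$ at $f^{-1}(z)$ is preserved along the family. This relies on openness of log canonicity in families of divisors combined with the fact that the $D_t$ all share the same transverse stratum structure at $z$, keeping the pullback $f^{*}D_t$ well-controlled near $f^{-1}(z)$. Alternatively, one could appeal directly to \cite[Lemma 2.12]{ACSS}, which reformulates the LC condition locally along $f^{-1}(z)$ in a way that depends only on the stratum type of $\Sigma + D$ at $z$; this allows the replacement of $D$ by $D'$ and closes the argument without any further deformation theory. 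All remaining steps are formal manipulations of the definitions.
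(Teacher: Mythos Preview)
Your overall strategy matches the paper's: verify condition (1) directly, and for condition (2) reduce to the statement that the log canonicity of $(X, B+f^{*}D)$ near $f^{-1}(z)$ depends only on the stratum type of $(Z,\Sigma + D)$ at $z$, then cite \cite{ACSS}. The paper invokes the proof of \cite[Proposition~2.19]{ACSS} for this independence result, not Lemma~2.12 (which in the paper is cited for a different purpose, the local-stability criterion); you should double-check the reference.

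Two substantive differences deserve comment. First, the paper precedes the appeal to \cite{ACSS} with a reduction to the projective case: it shrinks $Z$ around $z$ so that $(X, B+f^{*}D)$ becomes log canonical everywhere, compactifies $(Z, \Sigma + D)$ to a log smooth projective pair $(\overline{Z}, \overline{\Sigma} + \overline{D})$, and takes a log canonical closure of $(X, B+f^{*}D)$ over $\overline{Z}$ via \cite{HX11}. This is done because the fact extracted from \cite[Proposition~2.19]{ACSS} is phrased with global log smoothness of $(Z,\Sigma+D)$ and global log canonicity of $(X,B+f^{*}D)$. Your proposal works purely locally near $z$ and does not include this step; you should either verify that the \cite{ACSS} argument goes through in the local setting or insert the same compactification.

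Second, your interpolation alternative is not on firm ground as written. Log canonicity is not open in families of boundaries in general, and the shared transverse stratum structure at $z$ is exactly what the \cite{ACSS} argument uses to establish the independence; the interpolation route therefore does not bypass that input but ultimately relies on it. I would drop the deformation sketch and go straight to the \cite{ACSS} citation, together with the projective reduction above.
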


\begin{proof}
%
The proof of \cite[Proposition 2.19]{ACSS} implies the following fact.
Let $z' \in Z$ be a point and $D_{z'}$ a
divisor such that 
\begin{enumerate}
\item $(Z, \Sigma+D_{z'})$ is log smooth;
\item $z'$ is a log canonical centre of $(Z, \Sigma+D_{z'})$; and
\item  $(X, B+f^*D_{z'})$ is log canonical.
\end{enumerate}
Then, if $D'_{z'}$ is any other divisor such 
that $(Z, \Sigma+D'_{z'})$ is log smooth and $z'$ 
is a log canonical centre of $(Z, \Sigma+D'_{z'})$, it follows 
that $(X, B+f^*D'_{z'})$ is log canonical.
This in turn immediately implies the proposition.
\end{proof}

\begin{lemma}
\label{lem_dlt_mod_*}
Let $f\colon (X, B,\bM.) \rightarrow Z$ be a generalised GLC contraction.
Consider a generalised dlt modification $p\colon (X', B',\bM.) \rightarrow (X, B,\bM.)$.

If $(X/Z, B,\bM.)$ satisfies generalised Property $(*)$, then $(X'/Z, B',\bM.)$ 
satisfies generalised Property $(*)$.
\end{lemma}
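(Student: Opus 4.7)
The plan is to verify the two conditions of generalised Property $(*)$ for $(X'/Z, B', \bM.)$ with respect to the same reduced divisor $\Sigma_Z \subset Z$ that witnesses the property for $(X/Z, B, \bM.)$. The two main inputs will be the defining crepant identity
\[
K_{X'} + B' + \bM X'. \;=\; p^*(K_X + B + \bM X.)
\]
of the generalised dlt modification, together with the fact that every $p$-exceptional divisor is a generalised log canonical place of $(X, B, \bM.)$ and hence appears in $B'$ with coefficient one.

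First I would verify condition (1), namely that the vertical part of $B'$ equals $f'^{-1}(\Sigma_Z)$ as a reduced divisor, where $f' \coloneqq f \circ p$. Any vertical component $E$ of $B'$ is either the strict transform of a vertical component of $B$, and hence lies over $\Sigma_Z$ with coefficient one by Property $(*)$ on $X$; or $E$ is $p$-exceptional, in which case $p(E)$ is a generalised log canonical centre of $(X, B, \bM.)$, so Lemma~\ref{lem_img_lc_centre_is_lc} forces $f(p(E)) \subseteq \Sigma_Z$. Conversely, every component of $f'^{-1}(\Sigma_Z)$ is either the strict transform of a component of $f^{-1}(\Sigma_Z)$ (which sits in $B$ with coefficient one) or is $p$-exceptional (hence an lc place, so in $B'$ with coefficient one). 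Together these inclusions show that the vertical part of $B'$ is reduced and coincides with $f'^{-1}(\Sigma_Z)$.

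Next, for condition (2), I would fix a closed point $z \in Z$ and a reduced divisor $\Sigma \geq \Sigma_Z$ with $(Z, \Sigma)$ log smooth around $z$. Adding $f'^*(\Sigma - \Sigma_Z) = p^*f^*(\Sigma - \Sigma_Z)$ to the crepant identity yields
\[
K_{X'} + B' + f'^*(\Sigma - \Sigma_Z) + \bM X'. \;=\; p^*\bigl(K_X + B + f^*(\Sigma - \Sigma_Z) + \bM X.\bigr),
\]
exhibiting $(X', B' + f'^*(\Sigma - \Sigma_Z), \bM.)$ as a crepant model of $(X, B + f^*(\Sigma - \Sigma_Z), \bM.)$. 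The downstairs pair is generalised log canonical around $f^{-1}(z)$ by Property $(*)$ for $(X/Z, B, \bM.)$; since crepant pullback preserves the discrepancy inequalities and, read at divisors upstairs, the coefficient bounds on the boundary, the upstairs pair is generalised log canonical around $f'^{-1}(z) = p^{-1}(f^{-1}(z))$.

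The main point requiring care is the identification of the vertical part of $B'$ in the first step: I must simultaneously rule out vertical components of $B'$ escaping $f'^{-1}(\Sigma_Z)$ and confirm that every component of $f'^{-1}(\Sigma_Z)$ appears with coefficient exactly one. Both tasks reduce to combining Lemma~\ref{lem_img_lc_centre_is_lc} with the description of the divisors extracted by a generalised dlt modification; once this is in hand, condition (2) is essentially automatic from crepant invariance.
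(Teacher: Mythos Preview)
Your proposal is correct and follows essentially the same approach as the paper: both arguments deduce condition~(2) immediately from the crepant identity $K_{X'}+B'+\bM X'.=p^*(K_X+B+\bM X.)$, and both reduce condition~(1) to the observation that the image under $p$ of any vertical component of $\lfloor B'\rfloor$ is a generalised log canonical centre of $(X,B,\bM.)$, whence Lemma~\ref{lem_img_lc_centre_is_lc} places it over $\Sigma_Z$. Your write-up is in fact a bit more explicit than the paper's, spelling out both inclusions needed for the equality $(B')^v=f'^{-1}(\Sigma_Z)$ and the reducedness of $(B')^v$, whereas the paper only highlights the less obvious direction.
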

\begin{proof}
Let $\Sigma \subset Z$ be the reduced divisor associated to $(X/Z, B,\bM.)$.

Note that $(X, B,\bM.)$ is generalised log canonical.
From the equality $K_{X'}+B'+\bM X'.= p^*(K_X+B+\bM X.)$, it is immediate that $(X, B+f^*D,\bM.)$ is generalised log canonical if and only if $(X', B'+p^*f^*D,\bM.)$ is generalised log canonical.

Thus to conclude it suffices to show that if $E$ is a vertical component of $\lfloor B' \rfloor$, then 
$p(E)$ is contained in a vertical component of $\lfloor B\rfloor$.
Generalised Property $(*)$ implies that, if $W \subset X$ is any generalised log canonical centre of $(X,B,\bM.)$, 
then by Lemma \ref{lem_img_lc_centre_is_lc} $f(W)$ is a log canonical centre of $(Z, \Sigma)$ and 
so $W$ is contained in $\lfloor B \rfloor$.
Since $p(E)$ is a log canonical centre, we may therefore conclude.
\end{proof}

\begin{lemma}
\label{lem_horz_has_*}
Let $f\colon (X, B) \rightarrow Z$ be a GLC contraction satisfying Property $(*)$ with $B \geq 0$.
Suppose in addition that 
\begin{enumerate}
\item $f\colon X \rightarrow Z$ is equidimensional; and 
\item if $D \subset Z$ is any reduced divisor, then $f^*D$ is reduced.
\end{enumerate} 
Then $(X, B^h)$ satisfies Property $(*)$.
\end{lemma}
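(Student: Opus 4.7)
The plan is to apply Proposition~\ref{prop_property*_open} to $(X, B^h)$ with the auxiliary divisor $\Sigma = 0$ on $Z$. Since $(Z, \Sigma_Z)$ is log smooth, $Z$ is smooth, so $(Z, 0)$ is log smooth, and the vertical part of $B^h$ is $0 = f^{-1}(0)$, which handles the first hypothesis of that proposition. Before this I will check that $(X, B^h)$ is a valid GLC log pair: $B^h \geq 0$ is immediate; $(X, B)$ is log canonical over the generic point of $Z$ where $B = B^h$, so the GLC condition passes to $B^h$; and once the divisorial identity $B^v = f^*\Sigma_Z$ is in hand, $B^v$ is $\mathbb{Q}$-Cartier, hence so is $K_X + B^h = (K_X + B) - B^v$.

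The key computational input is the equality $B^v = f^*\Sigma_Z$ as divisors, not merely as supports. Property $(*)$ for $(X, B)$ asserts that $B^v$ is a reduced divisor equal to $f^{-1}(\Sigma_Z)$. Hypothesis (1) (equidimensionality) ensures $f^{-1}(\Sigma_Z)$ is a divisor in $X$, while hypothesis (2) (reduced pullback) ensures $f^*\Sigma_Z$ is reduced and therefore equals $f^{-1}(\Sigma_Z)$ as Weil divisors. From this one obtains, for any reduced divisor $\tilde\Sigma \geq \Sigma_Z$ on $Z$, the identity
\[
B^h + f^*\tilde\Sigma = B^h + f^*\Sigma_Z + f^*(\tilde\Sigma - \Sigma_Z) = B + f^*(\tilde\Sigma - \Sigma_Z),
\]
which converts log canonicity statements for $(X, B)$ into log canonicity statements for $(X, B^h)$.

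To finish, I verify the remaining hypothesis of Proposition~\ref{prop_property*_open}: given $z \in Z$, I must produce a reduced divisor $D$ on $Z$ with $(Z, D)$ log smooth around $z$, with $z$ a log canonical centre of $(Z, D)$, and with $(X, B^h + f^*D)$ log canonical around $f^{-1}(z)$. Starting from $\Sigma_Z$ and adding sufficiently many general hypersurfaces through $z$ that are snc with $\Sigma_Z$ at $z$, I build a reduced divisor $\tilde\Sigma \geq \Sigma_Z$ such that $(Z, \tilde\Sigma)$ is log smooth around $z$ and $z$ is a stratum (hence a log canonical centre) of $(Z, \tilde\Sigma)$. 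Property $(*)$ for $(X, B)$ applied to $\tilde\Sigma$ gives that $(X, B + f^*(\tilde\Sigma - \Sigma_Z))$ is log canonical around $f^{-1}(z)$, which by the displayed identity is precisely $(X, B^h + f^*\tilde\Sigma)$. Taking $D = \tilde\Sigma$, Proposition~\ref{prop_property*_open} then yields Property $(*)$ for $(X, B^h)$.

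The only subtle step is confirming the divisorial equality $B^v = f^*\Sigma_Z$, which is exactly where both assumptions (1) and (2) are used: without equidimensionality $f^{-1}(\Sigma_Z)$ might fail to be a divisor, and without the reduced pullback condition $f^*\Sigma_Z$ could carry ramification multiplicities strictly larger than those of the reduced divisor $B^v$, so the clean horizontal/vertical decomposition used above would break. Once this identity is established, everything else is bookkeeping against the openness criterion.
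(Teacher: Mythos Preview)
Your proof is correct and follows essentially the same route as the paper: you establish $B^v = f^*\Sigma_Z$ using hypotheses (1) and (2), then for each $z$ you enlarge $\Sigma_Z$ to a divisor $\tilde\Sigma$ making $z$ a log canonical centre and invoke Property~$(*)$ for $(X,B)$ via the identity $B^h + f^*\tilde\Sigma = B + f^*(\tilde\Sigma - \Sigma_Z)$, concluding with Proposition~\ref{prop_property*_open}. The paper's argument is identical (with $G = \Sigma_Z$ and $G+G' = \tilde\Sigma$ in your notation), though it is slightly terser about why $K_X+B^h$ is $\mathbb Q$-Cartier and about the GLC condition.
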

\begin{proof}
We first claim that $K_X+B^h$ is $\mathbb Q$-Cartier.  Observe that Property $(*)$ implies that, if $D$ is a vertical component of $B$, then in fact $D$ is a component of $\lfloor B\rfloor$.
This together with item (2) implies that $B^v = f^*G$ where $G$ is a reduced smooth normal crossings divisor on $Z$.

For any $z \in Z$, let $G' \geq 0$ be reduced divisor such that $(Z, G+G')$ 
is log smooth and $z$ 
is a log canonical centre of $(Z, G+G')$.
Since $(X, B)$ satisfies Property $(*)$, we see that
$(X, B^h+f^*(G+G'))$ is log canonical.
Since $z \in Z$ is arbitrary, we may apply Proposition \ref{prop_property*_open} to conclude $(X, B^h)$ satisfies Property $(*)$.
\end{proof}

\begin{proposition}
\label{prop_equi*_cover}
Let $f\colon (X, B) \rightarrow Z$ be an equidimensional GLC 
contraction which satisfies Property $(*)$ and where $B \geq 0$.

Then, there exists a diagram

\begin{center}
\begin{tikzcd}
Y \arrow[d, "g"] \arrow[r, "s"] & X \arrow[d, "f"] \\
W \arrow[r, "r"] & Z
\end{tikzcd}
\end{center}

where 
\begin{enumerate}
\item $r\colon W \rightarrow Z$ is a finite Galois morphism; 
\item  $(Y, C)$ is the normalisation of $(X, B^h)\times_ZW$; and
\item $g\colon (Y, C) \rightarrow W$ is a locally stable family.
\end{enumerate}

Moreover, if $M$ (resp. $N$) is the moduli part of $(X/Z, B)$ (resp. $(Y/W, C)$) 
and $M$ is $f$-semi-ample,
then 
\begin{enumerate}
\item $s^*M \sim N$, where $s\colon Y \rightarrow X$ is the natural morphism;
\item   for all $k >0$ sufficiently divisible, $r^*f_*\mathcal O_X(kM)$ is locally free; and 
 \item if, in addition, $f\colon (X, B^h) \to Z$ is a locally stable family, then for all $k >0$ sufficiently divisible, $r^*f_*\mathcal O_X(kM) \cong g_*\mathcal O_Y(kN)$.
\end{enumerate}
\end{proposition}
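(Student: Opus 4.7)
The plan is to build $r\colon W \to Z$ as a Kawamata-type finite Galois cover ramified along $\Sigma_Z$ (the reduced divisor on $Z$ furnished by Property $(*)$), with ramification indices designed to "absorb" the fibre multiplicities of $f$. Concretely, Property $(*)$ forces $(Z,\Sigma_Z)$ to be log smooth, so in particular $Z$ is smooth; and equidimensionality lets me write $f^{*}P = \sum_i m_{P,i} F_{P,i}$ locally near each prime component $P$ of $\Sigma_Z$. Setting $m_P \coloneqq \operatorname{lcm}_i m_{P,i}$ and applying Kawamata's covering trick to the snc pair $(Z,\Sigma_Z)$ produces a finite Galois $r\colon W \to Z$ with $W$ smooth and with $r^{*}P$ of multiplicity $m_P$ along each component of $r^{-1}(P)$.

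Next, form $Y$ as the normalization of $X \times_Z W$, with maps $s\colon Y \to X$ and $g\colon Y \to W$, and set $C \coloneqq s^{*}B^h$. The divisibility $m_{P,i} \mid m_P$ yields, via a local toric calculation on $X \times_Z W$ near each $F_{P,i}$, that the preimage of each $F_{P,i}$ on $Y$ is reduced; equivalently, if $\Sigma_W \coloneqq r^{-1}(\Sigma_Z)^{\mathrm{red}}$, then $g^{*}\Sigma_W = s^{-1}(B^v)^{\mathrm{red}}$ and $g$ has reduced fibres over $\Sigma_W$. Since $s$ is étale in codimension one away from $s^{-1}(B^v)$, the divisor $C$ is purely horizontal over $W$.

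To check that $g\colon(Y,C)\to W$ is locally stable, I would apply Lemma \ref{lem_prop*_horz_semi-stable}, which reduces the task to verifying Property $(*)$ for $(Y/W,C)$. The associated reduced divisor on $W$ is empty since $C=C^h$. For a point $w\in W$ lying over $z = r(w)\in Z$, and for any reduced $D_W$ with $(W,D_W)$ log smooth at $w$, I would write $D_W = r^{*}D_Z + E$ after possibly enlarging, choose $D_Z\geq\Sigma_Z$ so that $(Z,\Sigma_Z+D_Z)$ is log smooth with $z$ a log canonical centre, and invoke Property $(*)$ of $f$ to obtain log canonicity of $(X,B+f^{*}(D_Z-\Sigma_Z))$ around $f^{-1}(z)$. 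Pulling back along $s$, together with the log étale identities $r^{*}(K_Z+\Sigma_Z)=K_W+\Sigma_W$ and $s^{*}(K_X+B^v)=K_Y+g^{*}\Sigma_W$ from the matching of ramification, transfers this to log canonicity of $(Y,C+g^{*}(D_W-\varnothing))$ around $g^{-1}(w)$. I expect the main technical obstacle to be precisely this verification: controlling the behaviour of non-normal crossings appearing in $r^{*}D_Z$ and checking that the pulled-back pair remains log canonical, for which a careful use of Proposition \ref{prop_property*_open} together with the equidimensionality of $g$ should suffice.

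Finally, for the moduli identities, the Property $(*)$ computation gives $B_Z=\Sigma_Z$, hence $M = K_X+B-f^{*}(K_Z+\Sigma_Z)$. Combining the two log étale identities above yields
\[
s^{*}M \;=\; s^{*}(K_X+B) - g^{*}(K_W+\Sigma_W) \;=\; K_Y + C + g^{*}\Sigma_W - g^{*}\Sigma_W - g^{*}K_W \;=\; K_Y+C-g^{*}K_W,
\]
while Lemma \ref{lem_mod_part_stable} applied to the locally stable $g\colon(Y,C)\to W$ identifies the right-hand side with $N$, so $s^{*}M\sim N$. For the compatibility $r^{*}f_{*}\mathcal O(kM)\cong g_{*}\mathcal O(kN)$, $r$ is finite and flat (being a finite morphism between smooth varieties), and local stability of $g$ guarantees that $g_{*}\mathcal O(kN)$ commutes with base change for $k$ sufficiently divisible; together with flat base change along $r$ this gives the desired isomorphism.
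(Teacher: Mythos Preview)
Your overall strategy---Kawamata cover to absorb fibre multiplicities, then verify Property $(*)$ via Lemma~\ref{lem_prop*_horz_semi-stable}---is the same as the paper's. The paper also uses Proposition~\ref{prop_property*_open} exactly as you anticipate: rather than handling an arbitrary $D_W$ on $W$, one chooses $H$ on $Z$ with $(Z,\Sigma_Z+H)$ log smooth and $r(w)$ an lc centre, pulls back to $W$ (where the Kawamata-cover structure guarantees $(W,\Sigma_W+r^*H)$ is again log smooth with $w$ an lc centre), and checks log canonicity by pulling back $(X,B+f^*H)$. Your phrase ``write $D_W=r^*D_Z+E$'' is not the right manoeuvre---arbitrary divisors on $W$ are not pullbacks---but since you already flag Proposition~\ref{prop_property*_open} as the tool, this is more a matter of presentation than substance.

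There is, however, a genuine gap. A Kawamata cover cannot in general be arranged to ramify \emph{only} along $\Sigma_Z$: the construction introduces auxiliary general divisors $H_1,\dots,H_k\subset Z$ into the branch locus. Over a component $Q_j\subset r^{-1}(H_j)$ with ramification index $n_j>1$, a local computation (using that $f$ has reduced fibres over $H_j$) gives $R_s=(n_j-1)$ along the corresponding component of $Y$, while $s^*B^v=0$ and $g^*\Sigma_W=0$ there. Hence your identity $s^*(K_X+B^v)=K_Y+g^*\Sigma_W$ fails by $-(n_j-1)$, and consequently $s^*M\neq N$. The same discrepancy obstructs the Property $(*)$ verification over $Q_j$. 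The paper's fix is simple but essential: enlarge $B$ to $B+f^*(\Sigma-\Sigma_Z)$, where $\Sigma\supset\Sigma_Z$ is the full branch locus, and correspondingly enlarge $\Sigma_Z$ to $\Sigma$. This leaves $M$ unchanged (both $B$ and the discriminant shift by $f^*(\Sigma-\Sigma_Z)$) and restores the Riemann--Hurwitz bookkeeping so that $\overline C\coloneqq s^*(K_X+B)-K_Y\geq 0$ and the moduli identity $s^*M\sim N$ holds.
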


\begin{proof}
Let $\Sigma_Z \subset Z$ be the reduced divisor associated to the Property $(*)$ pair $(X/Z, B)$.

Because $X \rightarrow Z$ is equidimensional, then away from subsets of codimension $\geq 2$ in $X$ and $Z$ we have
that $f\colon X \rightarrow Z$ is smooth, hence toroidal.
Therefore, by \cite[Proposition 5.1]{AK00}, we may find a 
Kawamata cover $r\colon W\rightarrow Z$ 
which is finite and Galois with Galois group $G$, see \cite[Theorem 1-1-1]{KMM87},
such that if $Y$ is the normalisation of $X\times_WZ$ then for any reduced divisor $D \subset W$ we have
$g^*D$ is reduced, where $g\colon Y \rightarrow W$ is the natural projection.

Let $\Sigma \subset Z$ denote the branch locus of $r \colon W \rar Z$.
If $C \subset Z$ is any divisor such that $f^*C$ is not reduced, then $C \subset \Sigma_Z$ and $C\subset \Sigma$.
Therefore, the prime divisors in $Z$ over which $f$ does not have 
reduced fibres form a simple normal crossing divisor.
By the construction of a Kawamata cover,
we may therefore choose $r\colon W \rightarrow Z$ so that
$\Sigma_Z \subset \Sigma$ and so that $(Z, \Sigma)$ is log smooth.
Replacing $B$ by $B+f^*(\Sigma-\Sigma_Z)$
we may assume that the branch locus of $s\colon Y \rightarrow X$ is contained in $\lfloor B \rfloor$.

Define $\overline{C}$ by the identity $K_Y+\overline{C} = s^*(K_X+B)$.
By the Riemann--Hurwitz formula, we have that $\overline{C} \geq 0$.
We claim that $(Y/W, \overline{C})$ satisfies Property $(*)$.
Indeed, let $\Sigma_W = r^{-1}\Sigma_Z$ and let $w \in W$ be arbitrary.
Let $H$ be a divisor on $Z$ such that $(Z, \Sigma_Z+H)$ is log smooth and 
$r(w)$ is a log canonical centre of $(Z, \Sigma_Z+H)$.
Then $(W, \Sigma_W+r^*H)$ is log smooth and $w$ is a log canonical centre of 
$(W, \Sigma_W+r^*H)$.
By the construction of Kawamata covers, $(W,\Sigma_W)$ is log smooth, 
and the log smoothness of $(W,\Sigma_W+r^*H)$ immediately follows from 
the fact that $\Sigma+H$ is a simple normal crossing divisor.
The claim regarding log canonical centres follows from \cite[Proposition 5.20]{KM98}.
Moreover, $K_Y+\overline{C}+g^*r^*H = s^*(K_X+B+f^*H)$ and so $(Y, \overline{C}+g^*r^*H)$ is log canonical.
Letting $w$ vary over all points of $W$ we may apply 
Proposition \ref{prop_property*_open} to see that $(Y, \overline{C})$ has Property $(*)$. 

By Lemma \ref{lem_horz_has_*}, $(Y, C\coloneqq \overline{C}^h)$ has Property $(*)$ and so
we may apply Lemma \ref{lem_prop*_horz_semi-stable} to see that $(Y, C)$ is locally stable.

To prove (1) we observe have an equality $s^*M = N$ because 
$K_Y+\overline{C} = s^*(K_X+B)$ and $K_W+\Sigma_W = r^*(K_Z+\Sigma_Z)$.
(3) is a consequence of (2) by Grauert's theorem and cohomology and base change \cite[Theorem III.12.8 and Corollary III.12.9]{Hartshorne77}.
To prove (2) it suffices to show that   $h^0(X_z, \mathcal O(kM)|_{X_z})$ is independent of $z \in Z$. 
To check this it suffices to replace $Z$ by a general curves passing through $z \in Z$ so we may freely assume
that $\dim Z = 1$.  Let $c\colon X \to U$ be the relative ample model of $M$ over $Z$, i.e., there exists an $f$-ample divisor $H$ on $U$ such that $M \sim_{\mathbb Q} c^*H$.  For $k \gg 0$ we see that $f_*\mathcal O_X(kM) = p_*\mathcal O_U(kH)$ where $p\colon U \to Z$ is the obvious morphism.  Since $U\to Z$ is flat (2) is then a consequence of Serre vanishing and semi-continuity of fibre dimension.
\end{proof}

\begin{lemma}
\label{lem_base_change_pushforward}
Let $f\colon (X, B) \rightarrow Z$ and $f'\colon (X', B') \rightarrow Z'$
be birationally equivalent equidimensional GLC fibrations with $B, B' \geq 0$
fitting into the following commutative diagram
\[
\begin{tikzcd}
  (X', B') \arrow[r, "\alpha", dashrightarrow] \arrow[d, "f'"] & (X, B) \arrow[d, "f"] \\
  Z' \arrow[r, "\beta"] &  Z.
 \end{tikzcd}
\]

Suppose moreover that the following hold
\begin{enumerate}
\item $(X, B)$ and $(X', B')$ are crepant birational over the generic point of $Z$;

\item the moduli part $M$ (resp. $M'$) of $(X/Z, B)$ (resp. $(X'/Z', B')$) is $f$-nef (resp. $f'$-nef);

\item $(X/Z, B)$ and $(X'/Z', B')$ satisfy Property $(*)$ and are BP stable;

\item $f_*\mathcal O_X(nM)$ is 
locally free for $n \gg 0$ sufficiently divisible; and

\item $\det f'_*\mathcal O_{X'}(nM')$ is nef for $n \gg 0$ sufficiently divisible.
\end{enumerate}

Then for all $n \gg 0$ sufficiently divisible and $k > 0$ we have
isomorphisms
 $\beta^*\det f_*\mathcal O_X(nM) \cong \det f'_*\mathcal O_{X'}(nM')$.  Moreover, if $p\colon W \rightarrow X$ and $q\colon W \rightarrow X'$
 resolves the rational map $X' \dashrightarrow X$ we have 
$q^*(kM'+(f')^*c_1(\det f'_*\mathcal O_{X'}(nM'))) \sim_{\mathbb Q} p^*(kM+f^*c_1(\det f_*\mathcal O_X(nM)))$.
\end{lemma}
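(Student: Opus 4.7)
The plan is to reduce to the case of a locally stable family, where by Lemma \ref{lem_mod_part_stable} the moduli part coincides with $K_{X/Z}+B$ and pushforwards commute with base change. The two main tools are BP stability, which controls the discriminant divisors under birational modifications of the base, and Proposition \ref{prop_equi*_cover}, which produces a locally stable model via a finite Galois cover of the base.

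First I would establish the compatibility of the moduli parts. Let $p\colon W\to X$ and $q\colon W\to X'$ resolve $\alpha$, so that $f\circ p=\beta\circ f'\circ q$. Applying BP stability of $(X/Z,B)$ to the birationally equivalent fibration $\beta\circ f'\circ q\colon W\to Z$ (with the birational morphisms $p\colon W\to X$ and $\beta\colon Z'\to Z$), and BP stability of $(X'/Z',B')$ to $f'\circ q\colon W\to Z'$ (with morphisms $q\colon W\to X'$ and the identity on $Z'$), both compute the discriminant of $W/Z'$; comparing yields $\beta^*(K_Z+B_Z)=K_{Z'}+B_{Z'}$. Combined with the crepant relation $p^*(K_X+B)=q^*(K_{X'}+B')$, which extends from the generic point to all of $W$ by log canonicity and BP stability, pulling back the two identities $K_X+B\sim f^*(K_Z+B_Z)+M$ and $K_{X'}+B'\sim (f')^*(K_{Z'}+B_{Z'})+M'$ and subtracting yields
\[
p^*M\sim_\QQ q^*M'.
\]

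Next I would address the determinant line bundles. Applying Proposition \ref{prop_equi*_cover} to both fibrations produces finite Galois covers $r\colon V\to Z$ and $r'\colon V'\to Z'$ together with locally stable families $g\colon (Y,C)\to V$ and $g'\colon (Y',C')\to V'$, with moduli parts $N,N'$ satisfying $s^*M\sim N$ and $s'^*M'\sim N'$, and determinant identifications $r^*\det f_*\mathcal{O}(nM)\cong \det g_*\mathcal{O}(nN)$ and $r'^*\det f'_*\mathcal{O}(nM')\cong \det g'_*\mathcal{O}(nN')$. Up to replacing $V'$ by a further Galois cover dominating a component of the normalisation of $V\times_Z Z'$, I may assume there is a morphism $\gamma\colon V'\to V$ compatible with $\beta$. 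By Lemma \ref{lem_mod_part_stable}, $N=K_{Y/V}+C$ and $N'=K_{Y'/V'}+C'$. Since locally stable families are preserved under base change over a reduced scheme and the relative dualising sheaf commutes with flat base change, $g\times_V V'$ is locally stable over $V'$, birational to $g'$, with moduli part $\gamma^*N$; base change for cohomology then yields $\gamma^*g_*\mathcal{O}(nN)\cong g'_*\mathcal{O}(nN')$. Taking determinants and combining gives $r'^*\beta^*\det f_*\mathcal{O}(nM)\cong r'^*\det f'_*\mathcal{O}(nM')$, and Galois descent along $r'$ produces the required isomorphism $\beta^*\det f_*\mathcal{O}(nM)\cong \det f'_*\mathcal{O}(nM')$. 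The second claim then follows formally from $f\circ p=\beta\circ f'\circ q$: one gets $p^*f^*\det f_*\mathcal{O}(nM)=q^*(f')^*\beta^*\det f_*\mathcal{O}(nM)\cong q^*(f')^*\det f'_*\mathcal{O}(nM')$, and combining with the equivalence $p^*M\sim_\QQ q^*M'$ from the first step yields the stated $\QQ$-linear equivalence.

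The most delicate step is arranging compatible covers and descending. One must choose $r$ and $r'$ so that a morphism $\gamma\colon V'\to V$ over $\beta$ exists compatibly with the Galois actions, and then descend the isomorphism of line bundles from $V'$ back to $Z'$. The hypotheses that $f_*\mathcal{O}(nM)$ is locally free and that $\det f'_*\mathcal{O}(nM')$ is nef enter precisely here: the first ensures the determinants are genuine line bundles, and the latter, combined with the negativity lemma for birational contractions, rules out any $\beta$-exceptional discrepancy between the two line bundles and promotes the isomorphism from a big open subset of $Z'$ to a global one.
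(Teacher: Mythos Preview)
Your approach is considerably more circuitous than the paper's, and the middle step has a genuine gap.

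For the compatibility of moduli parts, what you do is essentially a rederivation of \cite[Proposition~2.21]{ACSS}, which the paper simply cites to obtain $p^*M \sim_{\mathbb Q} q^*M'$. That part is fine.

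For the determinants, the paper's argument is direct and never invokes Proposition~\ref{prop_equi*_cover}. Setting $E = f_*\mathcal O(nM)$ and $E' = f'_*\mathcal O(nM')$, the projection formula together with $p^*M \sim_{\mathbb Q} q^*M'$ yields $\beta_*E' = E$ for $n$ sufficiently divisible. The adjunction unit $\beta^*E = \beta^*\beta_*E' \to E'$ is then an isomorphism off $\operatorname{Exc}(\beta)$; taking determinants produces an effective $\beta$-exceptional divisor $F$ with $\det E' \sim \beta^*\det E + F$. Hypothesis~(5) makes $F$ $\beta$-nef, and the negativity lemma forces $F = 0$. The second claim then follows immediately.

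Your cover-and-descend route breaks at the assertion ``base change for cohomology then yields $\gamma^*g_*\mathcal O(nN)\cong g'_*\mathcal O(nN')$''. Cohomology and base change only compares $g_*$ with $(g\times_V V')_*$; the families $Y\times_V V' \to V'$ and $Y' \to V'$ are merely \emph{birational}, and identifying their pushforwards requires exactly the projection-formula comparison the paper carries out on the original fibrations --- so the detour gains nothing. Furthermore, descent of a line bundle isomorphism along the finite map $r'\colon V' \to Z'$ is not automatic: $r'^*L_1 \cong r'^*L_2$ does not imply $L_1 \cong L_2$ without checking equivariance. You do correctly identify that the negativity lemma and hypotheses~(4)--(5) are what close the argument; but once you are prepared to use the negativity lemma to kill a $\beta$-exceptional discrepancy, the entire Galois-cover apparatus is unnecessary.
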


\begin{proof}
For ease of notation set $E = f_*\mathcal O_X(nM)$ and $E' = f'_*\mathcal O_{X'}(nM')$.
Let $q\colon R \rightarrow X$ and $p\colon R \rightarrow X'$ resolve the birational map $\alpha \colon X' \dashrightarrow X$.
Let $g\colon R \rightarrow Z'$ be the composition of $f'$ and $p$.
By \cite[Proposition 2.21]{ACSS}, we know that $q^*M \sim_{\mathbb Q} p^*M'$.

By the projection formula,
it follows that for $n \gg 0$ sufficiently divisible we have $\beta_*E' = E$.
This gives us a morphism 
$\beta^*E \rightarrow \beta^*\beta_*E' \rightarrow E'$, which is an isomorphism away from ${\rm exc}(\beta)$.
Taking determinants we get a non-zero morphism $\beta^*\det E \rightarrow \det E'$, which is an isomorphism
away from ${\rm Exc}(\beta)$ and so we can deduce that 
$\det E' \otimes \beta^*\det E^* \sim_{\mathbb Q} \mathcal O(F)$ where $F \geq 0$ is $\beta$-exceptional.

By assumption $\det E'$ is $\beta$-nef and so the same is true of $F$, and so 
the negativity lemma, \cite[Lemma 3.39]{KM98}, implies that $F = 0$.
We deduce that  $\beta^*\det E \cong \det E'$ and
$q^*(kM'+(f')^*c_1(\det E')) \sim_{\mathbb Q} p^*(kM+f^*c_1(\det E))$,
from which we may conclude.
\end{proof}


\section{On semi-ampleness of the moduli part}

\subsection{Stable pairs}

\begin{lemma}
\label{lem_stable_max}
Let $f\colon (X, B) \rightarrow Z$ be a family of stable log varieties where $Z$ is normal and projective.  Let $M$ be the moduli part
of $(X/Z, B)$.

\begin{enumerate}
\item For $m \gg 0$ sufficiently divisible, $f_*\mathcal O_X(mM)$ is 
a vector bundle whose formation commutes with base change.

\item For $m \gg 0$ sufficiently divisible, 
$\lambda_m\coloneqq \det f_*\mathcal O_X(mM)$ is semi-ample.

\item For $m \gg 0$ sufficiently divisible and all $\epsilon \in \QP$, $M+\epsilon f^*c_1(\lambda_m)$ is semi-ample.

\end{enumerate}
\end{lemma}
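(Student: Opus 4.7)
The plan is to deduce all three parts from the established theory of the coarse moduli space of stable pairs, using the identification $M = K_{X/Z} + B$ from Lemma~\ref{lem_mod_part_stable} together with the fact that $K_{X/Z}+B$ is $f$-ample by stability. For (1), for $m \gg 0$ sufficiently divisible one has that $mM$ is $f$-very ample and $R^if_*\mathcal{O}_X(mM) = 0$ for every $i>0$, the vanishing being checked fibrewise from a Kodaira-type vanishing for ample line bundles on slc fibres (see, e.g., \cite[Chapter 5]{Kollar21}). Flatness of $f$ together with the fibrewise vanishing and the constancy of $h^0(X_z,\mathcal{O}_{X_z}(mM|_{X_z}))$ then make standard cohomology and base change applicable, yielding that $f_*\mathcal{O}_X(mM)$ is locally free and commutes with arbitrary base change.

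For (2), I would invoke the projectivity of the coarse moduli space $\overline{M}$ of stable pairs with the given invariants together with the existence of an ample CM-type polarization $\Lambda$ on $\overline{M}$, as established by Kov\'acs--Patakfalvi \cite{KP17}. The family $f$ induces a moduli map $\mu\colon Z \to \overline{M}$ under which $\lambda_m$ agrees, for $m$ sufficiently divisible, with a positive multiple of $\mu^*\Lambda$; hence $\lambda_m$ is semi-ample as the pull-back of a semi-ample line bundle.

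For (3), which is the main content, I would appeal directly to the central positivity result of \cite{KP17}: for $m \gg 0$ sufficiently divisible and any $\epsilon \in \QP$, the divisor $M + \epsilon f^*\lambda_m$ is semi-ample on $X$. Intuitively, $M$ is $f$-ample while $\lambda_m$ provides positivity on $Z$ in the moduli direction, so large positive combinations are obviously semi-ample; the delicate point handled by \cite{KP17} is that this positivity survives for \emph{arbitrarily small} $\epsilon > 0$. This is the only substantive step of the lemma, and I would treat it as a black-box citation of the Kov\'acs--Patakfalvi theorem. Parts (1) and (2) are then formal consequences of standard base change for flat families with an ample relative polarization and of the construction of the CM line bundle on the moduli space.
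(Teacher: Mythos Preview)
Your overall strategy---reduce everything to \cite{KP17} via $M=K_{X/Z}+B$---is the same as the paper's, and your treatment of (1) matches theirs (they cite \cite[Lemma~7.7]{KP17} for exactly the vanishing-plus-base-change mechanism you describe). The difference lies in how (2) and (3) are extracted from \cite{KP17}, and there your proposal has a genuine gap.

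For (3) you write that you would ``appeal directly to the central positivity result of \cite{KP17}: \dots $M+\epsilon f^*\lambda_m$ is semi-ample'', treating this as a black box. But \cite{KP17} does not contain such a statement. What \cite{KP17} proves is (a) global nefness of $K_{X/Z}+B$ and (b) \emph{ampleness} of $\lambda_m$ when the moduli map is \emph{finite}. When the moduli map $Z\to\overline{M}$ is not finite, $\lambda_m$ is only semi-ample, and then ``nef $+$ $f$-ample $+$ $\epsilon\cdot$(pullback of semi-ample)'' is \emph{not} automatically semi-ample for small $\epsilon$; one genuinely needs to factor through a base where the pulled-back class becomes ample. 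The paper supplies this missing step via \cite[Corollary~6.20]{KP17}: one constructs a diagram
\[
(X_0,B_0)/Z_0 \;\xleftarrow{\ (s,t)\ }\; (\overline{X},\overline{B})/\overline{Z}\;\xrightarrow{\ (\sigma,\tau)\ }\;(X,B)/Z
\]
with $\tau$ finite and $(X_0,B_0)\to Z_0$ having finite moduli map. Then $\lambda^0_m$ is genuinely ample by \cite[Corollary~7.3]{KP17}, and since $M_0$ is nef and $f_0$-ample one gets that $M_0+\epsilon f_0^*\lambda^0_m$ is \emph{ample} for every $\epsilon>0$ by the elementary convexity argument $M_0+\epsilon f_0^*\lambda^0_m=\tfrac{\epsilon}{N}(M_0+Nf_0^*\lambda^0_m)+(1-\tfrac{\epsilon}{N})M_0$. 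Part~(1) gives $\tau^*\lambda_m=t^*\lambda^0_m$ and $\sigma^*M=s^*M_0$, so (2) and (3) follow by pulling back and descending along the finite map $\tau$. Your appeal in (2) to a direct pullback of a CM line bundle from the coarse moduli space runs into the same issue (and additionally the stacky subtlety that $\lambda_m$ need not literally descend); the finite-cover reduction handles both points at once.
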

\begin{proof}
Part (1) is a direct consequence of \cite[Lemma 7.7]{KP17} (and its proof) and the fact that $M = K_{X/Z}+B$.

To show (2) and (3)  consider the following commutative diagram, which is guaranteed to exist
by \cite[Corollary 6.20]{KP17}:
\begin{equation}
\begin{tikzcd}
  (X_{0}, B_0) \arrow[d, "f_0"] & \arrow[l, "s"]  (\overline{X}, \overline{B}) \arrow[r, "\sigma"] \arrow[d, "\overline{f}"] & (X, B) \arrow[d, "f"] \\
   Z_0 & \arrow[l, "t"] \overline{Z} \arrow[r, "\tau"] &  Z
 \end{tikzcd}
\end{equation}
where all three of our vertical morphisms are families of stable log varieties, $\tau$ is finite, and the moduli map associated to $(X_0, B_0) \rightarrow Z_0$ is finite.

By \cite[Corollary 7.3]{KP17} and its proof for $m \gg 0$ sufficiently divisible, 
$\lambda^0_m\coloneqq \det f_{0*}\mathcal O_{X_0}(mM_0)$ is ample, where $M_0$ is the moduli
part of $(X_0/Z_0, B_0)$.
This implies that $M_0+\epsilon f_0^*c_1(\lambda^0_m)$ is ample 
for all $\epsilon \in \QP$, since $M_0$ is globally nef and relatively ample, 
see \cite[Lemma 7.7 and proof of Theorem 7.1.1]{KP17}.
Next, note that we have $\sigma^*M = s^*M_0$ and by (1) we know $\tau^*\lambda_m = t^*\lambda^0_m$, which implies both (2) and (3).
\end{proof}

\subsection{Intermediate Kodaira dimension}

\begin{lemma} \label{lemma_seminormal}
Let $(X,\Delta)$ be a log canonical pair with $\Delta \geq 0$, and let $f \colon (X,\Delta) \rar Z$ be a 
fibration with $\K X. + \Delta \sim_\QQ 0/Z$.
Let $(Z,B_Z,\bM.)$ be the generalised pair induced on $Z$ by the canonical bundle formula.
Then, any union of irreducible components of $\mathrm{Nklt}(Z,B_Z,\bM.)$ is semi-normal.
\end{lemma}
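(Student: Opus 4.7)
The plan is to reduce the claim to the classical seminormality theorem for non-klt loci of log canonical pairs due to Koll\'ar (the union of any collection of lc centres of an lc pair, equipped with its reduced structure, is seminormal), transferring the statement from $(X,\Delta)$ to $(Z,B_Z,\bM.)$ through the contraction $f$.

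First I would pass to a log resolution $\pi \colon (X',\Delta') \rar (X,\Delta)$, defined by $K_{X'}+\Delta' = \pi^*(K_X+\Delta)$, so that $\Delta'$ has simple normal crossings support with coefficients at most one and the relation $K_{X'}+\Delta' \sim_\QQ 0/Z$ persists. Applying the canonical bundle formula to $(X',\Delta') \rar Z$ yields the same generalized pair $(Z,B_Z,\bM.)$ on the base. By the construction of the discriminant $B_Z$ via log canonical thresholds, and after choosing a sufficiently high model on which $\bM.$ descends together with an effective representative of $\bM Z.$ whose support is in general position with respect to $B_Z$, the generalized non-klt centres of $(Z,B_Z,\bM.)$ are exactly the images under $f \circ \pi$ of those lc strata of $(X',\Delta')$ that map into $\mathrm{Nklt}(Z,B_Z,\bM.)$.

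Given a union $V$ of irreducible components of $\mathrm{Nklt}(Z,B_Z,\bM.)$, I would let $W$ denote the union of all lc strata of $(X',\Delta')$ whose image under $f\circ\pi$ is contained in $V$, endowed with its reduced structure. By the identification above, $W$ is a union of lc strata of the lc pair $(X',\Delta')$, so Koll\'ar's seminormality theorem applies and yields that $W$ is seminormal. Finally, I would descend seminormality from $W$ to $V$: because $f$ is a contraction and the general fibre of $f\circ\pi|_W \colon W \to V$ over each irreducible component of $V$ is connected, the natural inclusion $\mathcal{O}_V \hookrightarrow (f\circ\pi|_W)_*\mathcal{O}_W$ realises $\mathcal{O}_V$ as a subring whose integral closure (in the total ring of fractions) coincides with the seminormalisation of $V$; combined with the seminormality of $W$ and standard sheaf-theoretic arguments for proper morphisms with connected fibres, this forces $V$ itself to be seminormal.

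\textbf{Main obstacle.} The delicate step is the identification of generalized non-klt centres of $(Z,B_Z,\bM.)$ with images of lc strata of $(X',\Delta')$. The lc centres arising from the discriminant $B_Z$ alone are well-understood, but the nef moduli b-divisor $\bM.$ could a priori contribute extra generalized non-klt centres through its effective representatives; to rule out spurious contributions one must work on a log resolution high enough that $\bM.$ descends as a nef Cartier b-divisor and then choose a representative whose support meets $B_Z$ transversally, so that the generalized log canonical structure on $Z$ is controlled purely by $B_Z$ up to a discrepancy that does not introduce new non-klt loci. The descent step three is comparatively standard but also requires a careful check that seminormality passes through a proper surjective morphism with geometrically connected generic fibres over each component of $V$.
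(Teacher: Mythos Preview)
Your strategy---identify the generalized non-klt locus on $Z$ with images of lc centres upstairs and then descend Koll\'ar's seminormality---is a natural one, but the write-up contains two genuine gaps.

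First, the step where you ``choose an effective representative of $\bM Z.$ whose support is in general position with $B_Z$'' is not available: the moduli b-divisor is only known to be b-nef, not b-semi-ample or even $\QQ$-effective in general, so there need not be any effective representative at all. The identification you want---that the generalized lc centres of $(Z,B_Z,\bM.)$ are exactly the images of lc centres of $(X,\Delta)$---is in fact true, but it is a theorem (Ambro's inversion of adjunction for lc-trivial fibrations, later extended to the generalized setting) and does not go through the mechanism you sketch.

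Second, and more seriously, the descent step is simply false as stated. Seminormality does not descend along a proper surjection with connected fibres from a seminormal source: the normalisation $\mathbb{A}^1 \to C$ of a cuspidal cubic is finite, bijective, has smooth (hence seminormal) source and reduced target, yet $C$ is not seminormal. Tracing your own argument in this example, what you actually obtain is $(f\circ\pi|_W)_*\mathcal{O}_W \cong \mathcal{O}_{V^{sn}}$ as sheaves on $V$, not $\mathcal{O}_V$; nothing you have written forces $V^{sn}=V$. To rescue a descent argument one needs a splitting of $\mathcal{O}_V \hookrightarrow (f\circ\pi|_W)_*\mathcal{O}_W$, and producing such a splitting is essentially where the Du Bois or Hodge-theoretic input enters.

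The paper sidesteps both issues by citing a theorem of Liu showing directly that any union of components of $\mathrm{Nklt}(Z,B_Z,\bM.)$ has Du Bois singularities; since Du Bois singularities are seminormal, the lemma follows in one line. Your approach could in principle be made to work with substantially heavier machinery (Koll\'ar's source-and-spring theory for lc centres, which supplies exactly the missing splittings), but not with the tools you invoke.
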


We refer to \cite[\S~2]{FS20} for the definition of $\mathrm{Nklt}(Z,B_Z,\bM.)$.

\begin{proof}
It follows immediately from \cite[Theorem 1.6]{Liu22} and the fact that Du Bois singularities are semi-normal.
\end{proof}

\begin{lemma}[{\cite[Lemma 2.3]{FI21}}] \label{lemma:connected:fivers:vs:cohomol:connected:fibers}
Let $p \colon X\to Y$ be a proper surjective morphism with connected fibres, 
with $Y$ semi-normal and $X$ reduced.
Then $p_* \mathcal{O} _X = \mathcal{O}_Y$.
\end{lemma}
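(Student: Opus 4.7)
The plan is to combine Stein factorisation with the defining property of semi-normality. I would form the Stein factorisation
\[
p \colon X \xrightarrow{\,q\,} Y' \xrightarrow{\,g\,} Y,
\]
where $Y' \coloneqq \Spec_Y (p_* \mathcal{O}_X)$ and $g$ is finite, so that $g_* \mathcal{O}_{Y'} = p_* \mathcal{O}_X$ tautologically. Hence it suffices to show that $g$ is an isomorphism.

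First, I would note that $Y'$ is reduced: indeed $p_* \mathcal{O}_X$ is locally $\Gamma(p^{-1}(U), \mathcal{O}_X)$, which has no nilpotents since $X$ is reduced. Second, since $p$ has connected fibres and $q$ is surjective onto $Y'$, each set-theoretic fibre of $g$ is a finite connected scheme over the corresponding residue field, hence a single closed point, so $g$ is finite and bijective. Third, over a closed point $y \in Y$ (with $k(y) = \CC$), the reduced scheme underlying the fibre $g^{-1}(y)$ is $\Spec$ of a finite, reduced, connected $\CC$-algebra, hence equals $\Spec \CC$; so $g$ induces a trivial residue field extension at every closed point. Putting these together, $g$ is a finite, bijective morphism from a reduced scheme inducing trivial residue field extensions on a dense set of points, and hence a universal homeomorphism in characteristic zero. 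The defining property of semi-normality of $Y$ then forces $g$ to be an isomorphism, and we conclude $\mathcal{O}_Y = g_* \mathcal{O}_{Y'} = p_* \mathcal{O}_X$.

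The main technical point is verifying that the residue field extensions are trivial at \emph{all} points of $Y$, upgrading $g$ from a set-theoretic bijection to a universal homeomorphism. I would handle this by comparing the generic degree of $g$ on each irreducible component of $Y$ with the fibre length at closed points (which is $1$ by the previous step, since the reduced fibre is a single $\CC$-point), using reducedness of $Y'$ and characteristic zero to rule out purely inseparable extensions; the semi-normality of $Y$ then immediately concludes the argument.
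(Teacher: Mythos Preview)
The paper does not supply its own proof of this lemma; it is quoted directly from \cite[Lemma~2.3]{FI21} without argument. Your approach via Stein factorisation is the standard one and is correct: the finite part $g\colon Y'\to Y$ has $Y'$ reduced since $X$ is; connectedness of the fibres of $p$ forces $g$ to be a bijection; over $\mathbb C$ the residue field extensions at closed points are automatically trivial; and the defining property of semi-normality then yields $g$ an isomorphism.

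Your final paragraph correctly isolates the only delicate point --- upgrading triviality of residue field extensions from closed points to all points --- but the phrasing is slightly off. You invoke ``fibre length at closed points (which is $1$ \ldots)'', yet what you actually established is that the \emph{reduced} fibre is a single $\mathbb C$-point; the scheme-theoretic fibre length could a~priori exceed $1$. The argument you want is rather in terms of cardinality: for any point $\eta\in Y$ with unique preimage $\eta'$, restrict $g$ to the reduced closures $\overline{\{\eta'\}}\to\overline{\{\eta\}}$; this is a finite surjection of integral schemes of degree $[k(\eta'):k(\eta)]$, and in characteristic zero it is generically \'etale, so a degree exceeding $1$ would produce more than one point in a general closed fibre, contradicting the bijectivity of $g$. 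With this adjustment your proof is complete.
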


\begin{proposition} \label{semistable_iitaka}
Let $f \colon (X, B) \rar Z$ be a GLC fibration which is also a locally stable family, where $Z$ is smooth.
Assume that $\K X/Z. + B$ is relatively semi-ample over $Z$.
Let $c \colon X \rar Y/Z$ denote the relative Iitaka fibration of $(X, B)$ over $Z$ and
let $(Y,B_Y,\bM.)$ be the generalised pair induced on $Y$.

Assume that the b-semi-ampleness conjecture holds.

Then, for every $z \in Z$, we may find $0 \leq \Delta_Y \sim_\QQ B_Y+\bM Y.$
such that $g \colon (Y, \Delta_Y) \rar Z$ is a stable family of pairs over a neighbourhood of $z \in Z$.
\end{proposition}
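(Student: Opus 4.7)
The plan is to use the canonical bundle formula to push the local stability of $f$ down from $X$ to a generalized local stability of $(Y, B_Y, \bM.) \to Z$, and then to invoke the b-semi-ampleness conjecture to realize $\bM Y.$ by an honest effective divisor so that the generalized pair becomes an ordinary stable family. Since $\K X/Z. + B$ is $f$-semi-ample with relative Iitaka fibration $c$, we have $\K X/Z. + B \sim_\QQ c^*(\K Y/Z. + B_Y + \bM Y.)$ and $\K Y/Z. + B_Y + \bM Y.$ is $g$-ample; in particular, the $g$-ampleness of $\K Y/Z. + \Delta_Y$ will be automatic for any $\Delta_Y \sim_\QQ B_Y + \bM Y.$.

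The first main step is to establish that, for every reduced SNC divisor $D \subset Z$ through $z$, the generalized pair $(Y, B_Y + g^*D, \bM.)$ is generalized slc near $g^{-1}(z)$. Since $f^*D = c^*g^*D$ and $(X, B + f^*D)$ is slc by the local stability of $f$ together with Proposition~\ref{prop_kollar_criterion}, this should follow by descending slc singularities through the contraction $c$, using Lemmas~\ref{lemma_seminormal} and~\ref{lemma:connected:fivers:vs:cohomol:connected:fibers} to guarantee that the non-klt irreducible components of $(Y, B_Y, \bM.)$ have semi-normal underlying structure, which is the technical input needed to pass from the log canonical condition upstairs to the generalized slc condition downstairs.

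The second main step invokes b-semi-ampleness: there is a birational model $\mu\colon Y^+ \to Y$ on which $\bM.$ descends and $\bM Y^+.$ is semi-ample. Pick $k$ divisible enough that $|k\bM Y^+.|$ is base-point free, and use Bertini to choose a general $\Delta_{Y^+}' \in |k\bM Y^+.|$ so that, for any reduced SNC $D \subset Z$ through $z$, the pair $(Y^+, B_{Y^+} + \tfrac{1}{k}\Delta_{Y^+}' + (g\circ\mu)^*D)$ is slc over a neighbourhood of $z$. Define $\Delta_Y := B_Y + \tfrac{1}{k} \mu_*\Delta_{Y^+}'$, so that $\Delta_Y \geq 0$ and $\Delta_Y \sim_\QQ B_Y + \bM Y.$; the slc property of $(Y, \Delta_Y + g^*D)$ then descends from $Y^+$ to $Y$ by a standard negativity-lemma argument applied to the $\mu$-exceptional discrepancies. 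Proposition~\ref{prop_kollar_criterion} then yields local stability of $(Y, \Delta_Y) \to Z$ over a neighbourhood of $z$, and combined with the $g$-ampleness noted above we conclude that the family is stable.

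The principal obstacle I expect is this descent step: namely, arranging that the Bertini choice made on the model $Y^+$ pushes forward to a divisor on $Y$ whose sum with $B_Y$ is slc, which is delicate in the presence of non-normal components of $\lfloor B_Y \rfloor$ arising from the canonical bundle formula. This is precisely the role of Lemmas~\ref{lemma_seminormal} and~\ref{lemma:connected:fivers:vs:cohomol:connected:fibers}, which guarantee the semi-normality and the identity $p_*\mathcal{O}_X = \mathcal{O}_Y$ needed to characterize slc singularities on the possibly non-normal boundary strata of the induced generalized pair.
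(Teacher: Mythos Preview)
Your broad two-step outline---first descend the log canonical condition from $(X,B+f^*D)$ to the generalized pair $(Y, B_Y + g^*D, \bM.)$, then realize $\bM.$ by a general effective divisor via b-semi-ampleness and Bertini---is correct in spirit and matches the paper's strategy. However, you have misidentified the technical tools at both steps, and this creates a genuine gap.

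First, $Y$ is normal (it is $\Proj_Z$ of the relative canonical algebra of the normal variety $X$), so the issue is log canonicity, not slc, and there are no ``non-normal components of $\lfloor B_Y\rfloor$'' to worry about in the sense you describe. Lemmas~\ref{lemma_seminormal} and~\ref{lemma:connected:fivers:vs:cohomol:connected:fibers} are \emph{not} used in the proof of this proposition at all; they appear in Theorem~\ref{thm_iitaka} to show that the scheme-theoretic fibres $Y_z$ are reduced and semi-normal, which is a different matter entirely. They do not furnish the input you need for either the descent of lc singularities through $c$ or for the push-forward of the Bertini divisor.

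The paper's actual mechanism for your first step is inversion of adjunction for LC-trivial fibrations \cite[Theorem 3.1]{Ambro04}, packaged through the Property $(*)$ formalism: by Lemma~\ref{lem_prop*_horz_semi-stable} the locally stable family $(X,B)\to Z$ satisfies Property $(*)$, and then Ambro's result transfers this to generalized Property $(*)$ for $(Y,B_Y,\bM.)\to Z$. The paper then passes to a generalized dlt modification $(\overline Y,\overline B_Y,\bM.)$ (Lemma~\ref{lem_dlt_mod_*}), on which the boundary $\overline B_Y$ turns out to be purely horizontal---this is the key observation, coming from the fact that the original family has no vertical lc centres. Lemma~\ref{lem_prop*_horz_semi-stable} then gives local stability of $(\overline Y,\overline B_Y)\to Z$ directly, and the Bertini step (with openness of local stability, \cite[Corollary 4.45]{Kollar21}) finishes. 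Your proposed route via $Y^+$ where $\bM.$ descends could be made to work, but the ``negativity-lemma argument'' you gesture at for the push-forward is not supported by the lemmas you cite; the cleaner device is precisely the dlt modification, which you are missing.
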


\begin{proof}
By Lemma \ref{lem_prop*_horz_semi-stable}, 
$(X, B)\rightarrow Z$ satisfies Property $(*)$, and so by \cite[Theorem 3.1]{Ambro04}
we see that $(Y, B_Y,\bM.)\rightarrow Z$ satisfies generalised Property $(*)$.  
Observe also that if $D \subset Z$ is any reduced divisor, then $g^*D$ is reduced as well.

Let $p\colon \overline{Y}\rightarrow Y$ be a generalised dlt modification, and
let $(\overline{Y}, \overline{B}_Y,\bM.)$ be the transformed pair. 
By Lemma \ref{lem_dlt_mod_*}, we see that $\bar{g}\colon (\overline{Y}, \overline{B}_Y,\bM.) \rightarrow Z$ 
satsifies generalised Property $(*)$, and so it follows that  
$(\overline{Y}/Z, \overline{B}_Y)$ 
satisfies Property $(*)$, and 
let $\Sigma \subset Z$ be the associated boundary.
Since $(X, B) \rightarrow Z$ is locally stable we see
that $\overline{B}_Y^h = \overline{B}_Y$, 
and so by Lemma \ref{lem_prop*_horz_semi-stable} 
$(\overline{Y}, \overline{B}_Y) \rightarrow Z$ is locally stable.

Since we are assuming the b-semi-ampleness conjecture, 
for a general choice of $0 \leq H \sim_{\mathbb Q} \bM \overline{Y}.$, by Bertini's theorem
(applied on a log resolution where $\bM.$ descends) we may guarantee that 
$(\overline{Y}_z, \overline{B}_{Y, z}+H_z)$
is slc. By \cite[Corollary 4.45]{Kollar21} there is an open neighbourhood of $z$ 
such that $(\overline{Y}/Z, \overline{B}_Y+H)$ is locally stable as required.
\end{proof}

\begin{theorem} \label{thm_iitaka}
Let $f \colon (X,\Delta) \rar Z$ be a GLC fibration which is also a locally stable family, where $Z$ is normal.

Assume that the geometric generic fibres over the generic points of the irreducible components of $Z$ are normal.
Assume that, for every $z \in Z$, $\K X_z. + \Delta_z$ is semi-ample.
Then, we may find $m_0 \in \mathbb N_{>0}$ depending on the family $f$ such that the following holds:
\begin{itemize}
    \item[(1)] for every $k \in \mathbb N$, 
	$h^0(X_z,\mathcal{O}\subs X_z. (km_0( \K X_z. + \Delta_z)))$ is independent of $z \in Z$;
    \item[(2)] for every $k \in \mathbb N$, $f_*\mathcal{O}\subs X. (km_0( \K X/Z. + \Delta))$ 
is a vector bundle, whose formation commutes with base change;
    \item[(3)] the relative Iitaka fibration $c \colon (X,\Delta) \rar Y$ of $f$ is a morphism and computes the ample model of each fibre; and
    \item[(4)] the formation of the relative Iitaka fibration commutes with base change by reduced schemes.
\end{itemize}
Furthermore, $g \colon Y \rar Z$ is flat.
In particular, the scheme theoretic fibres of $g$ are pure-dimensional reduced and semi-normal.
\end{theorem}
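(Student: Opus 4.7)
The plan is to prove items (1)--(4) and the concluding flatness/fibre statements in turn, with the constancy of log plurigenera (item (1)) as the crux from which everything else follows by cohomology-and-base-change together with a relative $\Proj$ construction.

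For item (1), I would first produce a single integer $m_0$, depending on the family $f$, such that $m_0(K_{X_z}+\Delta_z)$ is base-point-free for every $z \in Z$. Since $(X,\Delta) \to Z$ is locally stable, the fibres vary in a bounded family of slc log pairs (with locally constant volume and dimension), and effective base-point-freeness in bounded families combined with the assumed fibrewise semi-ampleness of $K_{X_z}+\Delta_z$ yields such a uniform $m_0$ (taking an $\mathrm{lcm}$ over connected components if necessary). I would then deduce constancy of $h^0(X_z, \mathcal{O}_{X_z}(km_0(K_{X_z}+\Delta_z)))$ on the connected components of $Z$ from the invariance of log plurigenera for locally stable families: the general form of \cite[Lemma~7.7]{KP17}, extended from the fibrewise ample to the fibrewise semi-ample setting via Koll\'ar's hull-and-husk arguments for families of slc pairs.

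Given item (1), Grauert's cohomology-and-base-change theorem immediately yields item (2): $f_*\mathcal{O}_X(km_0(K_{X/Z}+\Delta))$ is a vector bundle whose formation commutes with arbitrary base change. For items (3) and (4), I would define the relative Iitaka fibration as
\[
c \colon X \longrightarrow Y \coloneqq \Proj_Z \bigoplus_{k \geq 0} f_*\mathcal{O}_X(km_0(K_{X/Z}+\Delta)).
\]
Fibrewise base-point-freeness of $m_0(K_{X_z}+\Delta_z)$, together with item (2), lifts to base-point-freeness of the relative linear system, so $c$ is a genuine morphism that restricts on each fibre to the ample model of $(X_z,\Delta_z)$. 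Compatibility of relative $\Proj$ with any base change, combined with the base-change statement in item (2), then gives item (4).

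Finally, constancy of every $h^0$ in $k$ makes the Hilbert polynomial of $Y_z$ (with respect to the tautological polarisation from the $\Proj$ construction) locally constant, and together with the normality of $Z$ this gives flatness of $g\colon Y\to Z$. Each scheme-theoretic fibre $Y_z$ then coincides, via item (4), with the ample model of $(X_z,\Delta_z)$, hence is reduced and of pure dimension equal to the (constant) Iitaka dimension. For semi-normality I would apply the canonical bundle formula to $c_z\colon (X_z,\Delta_z)\to Y_z$, yielding a generalised pair $(Y_z, B_{Y_z,z}, \mathbf{M}_z)$ whose non-klt locus contains every codimension-one point of $Y_z$ (since $(K_{X_z}+\Delta_z)$ is $c_z$-trivial along each such fibre, forcing the discriminant coefficient to equal $1$), and then invoke Lemma~\ref{lemma_seminormal}. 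The main obstacle is the invariance of plurigenera in the second paragraph: without the b-semi-ampleness conjecture, I cannot appeal to Proposition~\ref{semistable_iitaka} to reduce to the fibrewise ample case, so this invariance must be established directly via boundedness plus Koll\'ar's hull-and-husk techniques for families of slc pairs.
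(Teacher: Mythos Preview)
Your plan has two genuine gaps, and both stem from reversing the logical order that the paper uses.

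\textbf{The invariance of plurigenera is not an input, it is the output.} You want to prove item (1) by invoking a ready-made invariance-of-log-plurigenera statement for locally stable families whose fibres have only \emph{semi-ample} (not ample) log canonical class, and you correctly flag this as ``the main obstacle''. No such statement is available off the shelf, and the vague appeal to ``boundedness plus hull-and-husk'' is not a proof. The paper avoids this entirely: it first reduces to the case $\dim Z=1$, forms the relative Iitaka fibration $c\colon X\to Y$ over the curve $Z$, and then proves directly that the special fibre $Y_0$ of $g\colon Y\to Z$ coincides with $\Proj R(X_0,\Delta_0)$. Once this is known over every curve, semi-continuity plus Grauert give item (1) in general, and a Noetherian argument produces the uniform $m_0$. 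So in the paper, ``$Y_0=\Proj R(X_0,\Delta_0)$'' is proved first and constancy of $h^0$ is deduced from it, not the other way round.

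\textbf{Your semi-normality argument is incorrect as written, and misses the role semi-normality actually plays.} You propose to apply the canonical bundle formula to $c_z\colon(X_z,\Delta_z)\to Y_z$ and claim the discriminant coefficient is $1$ at every codimension-one point because ``$K_{X_z}+\Delta_z$ is $c_z$-trivial''. This is false: $c_z$-triviality of $K_{X_z}+\Delta_z$ is the very hypothesis of the canonical bundle formula and says nothing about the log canonical threshold of the fibres, which is generically positive. Moreover $Y_z$ is not known to be normal, so you cannot apply the canonical bundle formula to $c_z$ directly, and Lemma~\ref{lemma_seminormal} concerns the non-klt locus of a generalised pair on a normal variety. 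The paper instead applies the canonical bundle formula to the \emph{global} morphism $c\colon X\to Y$ over the curve $Z$, obtaining a generalised pair $(Y,B_Y,\bM.)$; local stability gives that $(X,\Delta+X_0)$ is lc, so by inversion of adjunction for lc-trivial fibrations $(Y,B_Y+g^*0,\bM.)$ is generalised lc, hence $g^*0$ is reduced and $Y_0$ sits inside $\mathrm{Nklt}(Y,B_Y+g^*0,\bM.)$. Now Lemma~\ref{lemma_seminormal} applies and yields semi-normality of $Y_0$. This semi-normality is then \emph{used}, via Lemma~\ref{lemma:connected:fivers:vs:cohomol:connected:fibers}, to show that the finite map $\Proj R(X_0,\Delta_0)\to Y_0$ is an isomorphism, which is exactly the missing step in your approach.
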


\begin{proof}
Item (2) immediately follows from item (1) by Grauert's theorem and cohomology 
and base change, see \cite[Corollary III.12.9 and Theorem III.12.11]{Hartshorne77}.
Similarly, item (4) follows from item (3).
Thus, we are left with showing items (1) and (3).

Let us first suppose that $\dim Z = 1$.
Let $c \colon (X,\Delta) \rar Y$ be the relative Iitaka fibration of $f$, which exists by the assumptions and \cite{HX11}.
Since $X$ is normal, so is $Y$.
Let $(Y/Z,B_Y,\bM.)$ be the generalised pair induced by the canonical bundle formula.
Since $Z$ is a curve, $g \colon Y \rar Z$ is flat, see \cite[Proposition III.9.7]{Hartshorne77}.
By semi-continuity \cite[Theorem III.12.8]{Hartshorne77}, it suffices to show that, for a special closed point $0 \in Z$, we have $Y_0 = \Proj(R(X_0,\Delta_0))$, where we set $R(X_0,\Delta_0)=\oplus_{m \in \mathbb N}\Gamma (X_0,\mathcal{O}\subs X_0.(m(\K X_0. + \Delta_0)))$.
Since $f$ is a locally stable family, $(X,\Delta+X_0)$ is log canonical.
Then, by inversion of adjunction for LC-trivial fibrations \cite[Theorem 3.1]{Ambro04}, the induced generalised pair $(Y/Z,B_Y+g^*0,\bM.)$ is generalised log canonical.
In particular, we have $Y_0=g^*0$ as divisors.
In particular, the scheme theoretic fibre $Y_0$ is $R_0$.
Since $Y$ is normal and $g^*0$ is Cartier, it follows that $\mathcal{O}_Y$ and $\mathcal{O}_Y(-g^*0)$ are $S_2$.
Then, by \cite[Corollary 2.61]{Kollar13}, $Y_0$ is $S_1$.
But then, by \cite[Tag 0344]{stacks-project}, $Y_0$ is reduced.
Furthermore, by Lemma \ref{lemma_seminormal}, $Y_0$ is semi-normal.

Now, we have a tower of morphisms as follows
\[
X_0 \xrightarrow{\alpha} \Proj(R(X_0,\Delta_0)) \xrightarrow{\beta} Y_0.
\]
For $m_0$ sufficiently large and divisible, we have the following:
\begin{itemize}
    \item the morphism $\alpha$ is given by the full linear series $|\Gamma (X_0,\mathcal{O}\subs X_0.(m_0(\K X_0. + \Delta_0)))|$; and
    \item the morphism $c_0 \colon X_0 \rar Y_0$ is given by the restricted linear series corresponding to
    \begin{equation} \label{eq_restriction}
    \im  (\Gamma (X,\mathcal{O}\subs X.(m_0(\K X/Z. + \Delta))) \rar \Gamma (X_0,\mathcal{O}\subs X_0.(m(\K X_0. + \Delta_0)))) .
    \end{equation}
    \end{itemize}
Since $c_0$ is given by a sub-series of the one giving $\alpha$, and since $c_0$ contracts exactly the same curves
as $\alpha$, the induced morphism $\beta$ is finite.
By construction, we have $\alpha_* \mathcal{O}\subs X_0. = \mathcal{O}_{\Proj(R(X_0,\Delta_0))}$.
Similarly, we have $c_* \mathcal{O}_X = \mathcal{O}_Y$ and so $X_0 \rar Y_0$ has connected fibres.
Then, by Lemma \ref{lemma:connected:fivers:vs:cohomol:connected:fibers}, we have $(c_0)_*\mathcal{O}_{X_0}=\mathcal{O}_{Y_0}$.
In turn, this gives $\beta_* \mathcal{O}_{\Proj(R(X_0,\Delta_0))}=\mathcal{O}_{Y_0}$.
But then, as $\beta$ is finite, we have $\Proj(R(X_0,\Delta_0))=Y_0$.
Since $g$ is flat, its fibres are pure-dimensional.
This settles items (3) when $\dim Z = 1$.

A priori, \eqref{eq_restriction} may not be an isomorphism, since a proper linear sub-series may still define an isomorphism.
To rule this case out, we observe that we just showed $(c_0)_* \mathcal{O}_{X_0}=\mathcal{O}_{Y_0}$.
Therefore, by the projection formula, in order to prove (1), it suffices to show that $h^0(Y_z,L|\subs Y_z. )$ is independent of $z \in Z$, where $L=\mathcal{O}_Y(m(K_Y+B_Y+\mathbf{M}_Y))$ and $m$ is sufficiently divisible.
But this latter fact readily follows by the flatness of $Y \to Z$ and relative Serre's vanishing applied to the relatively ample line bundle $L$.
This settles (1) when $\dim Z = 1$.

We now handle the general case.
Let $l_0$ denote the Cartier index of $\K X/Z. + \Delta$.
By generic flatness applied to $g \colon Y \rar Z$ and relative Serre's vanishing, we may find a non-empty 
open subset $U \subset Z$ and $m_0 \in \mathbb N$ divisible by $l_0$ such that (1) holds over $U$.
Then, for every closed point $z' \in Z$, by restricting to a general curve passing 
through $z'$ and which meets $U$, 
we may find $m_{z'} \in \mathbb N$ divisible by $m_0$ such that, 
for every $k \in \mathbb N$, $h^0(X_{z'},\mathcal{O}\subs X_z. (km_{z'}( \K X_z'. + \Delta_{z'})))$ 
attains the generic value achieved over $U$.
Then, by semi-continuity and Grauert's theorem \cite[Theorem III.12.8 and Corollary III.12.9]{Hartshorne77}, 
$z'$ is in the open subset where $f_*\mathcal{O}\subs X. (km_{z'}( \K X. + \Delta))$ is a vector bundle, 
for every $k \in \mathbb N$.
Thus, by replacing $m_0$ with a multiple, we may assume that any fixed closed point $z'$ belongs to the open set $U$.
Since the $\mathcal{O}_Z$-algebra $\oplus_{k \in \mathbb N}f_*\mathcal{O}_X(kl_0(\K X/Z.+\Delta))$ 
is finitely generated, after finitely many iterations, the above 
procedure extends the open subset $U$ to the whole $Z$.

Since (1) and (2) are settled and (3) is known over a curve, item (3) now follows immediately.
Similarly, the additional properties of the fibres of $g$ from (3) hold since they 
may be checked in the case the base is a curve.
Lastly, the flatness of $g$ follows from \cite[Theorem 3.20]{Kollar21}.
\end{proof}

\begin{theorem}
\label{thm_locally_stable_semi-ample}
Let $f \colon (X,\Delta) \rar Z$ be a GLC fibration which is also locally stable family, where $Z$ is quasi-projective and normal.
Assume $\K X/Z. +\Delta$ is $f$-nef and 
that a general fibre is a good minimal model.
Further assume that the b-semi-ampleness conjecture holds.

Then, for $m \in \mathbb N_{>0}$ sufficiently divisible,
\begin{enumerate}
\item $\lambda_m \coloneqq \det f_* \mathcal{O}_X(m(\K X/Z. + \Delta))$ is basepoint-free; and

\item for all $\epsilon \in \QP$, $\K X/Z. + \Delta + \epsilon f^* c_1(\lambda_m)$ is semi-ample.
\end{enumerate}
\end{theorem}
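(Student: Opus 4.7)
The strategy is to factor through the relative Iitaka fibration, where the problem becomes one about a genuine stable family, and then invoke Lemma \ref{lem_stable_max}. By Lemma \ref{lem_mod_part_stable}, the moduli part of the locally stable family $(X/Z,\Delta)$ equals $K_{X/Z}+\Delta$. Since a general fibre is a good minimal model, Lemma \ref{lem_gen_fibre_min_model} combined with local stability (and Lemma \ref{lem_prop*_horz_semi-stable}) shows that $K_{X/Z}+\Delta$ is $f$-semi-ample; Theorem \ref{thm_iitaka} then promotes the relative Iitaka fibration to a morphism $c\colon (X,\Delta)\to Y$ with flat target $g\colon Y\to Z$, whose formation commutes with reduced base change. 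The canonical bundle formula endows $g$ with a generalised pair structure $(Y,B_Y,\bM.)$ satisfying $K_X+\Delta\sim_{\mathbb Q} c^*(K_Y+B_Y+\bM Y.)$.

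The main step is to upgrade $g$ to a (globally) stable family. Proposition \ref{semistable_iitaka} provides, for each $z\in Z$, a divisor $0\le \Delta_Y\sim_{\mathbb Q} B_Y+\bM Y.$ making $g\colon(Y,\Delta_Y)\to Z$ stable over a neighbourhood of $z$. To obtain a single global choice, my plan is to use the b-semi-ampleness conjecture to replace $\bM.$ by a semi-ample descent on an appropriate model, and then pick $H\sim_{\mathbb Q} \bM Y.$ generic by a Bertini-type argument applied simultaneously to all fibres (strengthening the local Bertini step inside the proof of Proposition \ref{semistable_iitaka} to a global one using openness of stability, \cite[Corollary 4.45]{Kollar21}); this produces one $\Delta_Y = B_Y + H$ making the family stable over all of $Z$. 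After, if necessary, compactifying $Z$ and extending the family via the properness of the moduli of stable log varieties, Lemma \ref{lem_stable_max} applies to $(Y,\Delta_Y)\to Z$: for $m$ sufficiently divisible, $\lambda^Y_m \coloneqq \det g_*\mathcal O(m(K_{Y/Z}+\Delta_Y))$ is semi-ample and $K_{Y/Z}+\Delta_Y+\epsilon g^*\lambda^Y_m$ is semi-ample for every $\epsilon\in\QP$.

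Finally, I will transfer these properties back to $X$. Setting $M_Y\coloneqq K_{Y/Z}+\Delta_Y$, we have $K_{X/Z}+\Delta \sim_{\mathbb Q} c^*M_Y$, so for $m$ sufficiently divisible $m(K_{X/Z}+\Delta)\sim c^*(mM_Y)$ as Cartier divisors. Since $c_*\mathcal O_X=\mathcal O_Y$ (a consequence of $c$ computing the ample model fibrewise, together with Lemma \ref{lemma:connected:fivers:vs:cohomol:connected:fibers}), the projection formula yields $f_*\mathcal O(m(K_{X/Z}+\Delta))\cong g_*\mathcal O(mM_Y)$, and passing to determinants identifies $\lambda_m$ with $\lambda^Y_m$, giving (1). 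For (2), pulling back the linear equivalence gives $K_{X/Z}+\Delta+\epsilon f^*\lambda_m \sim_{\mathbb Q} c^*(M_Y+\epsilon g^*\lambda^Y_m)$, and semi-ampleness is preserved under pullback. The main obstacle is the globalization of $\Delta_Y$: ensuring that a single choice of $\Delta_Y$ makes $g$ stable over the entire $Z$ simultaneously, which forces one to combine the b-semi-ampleness model of $\bM.$ with a Bertini-in-families argument, and to reconcile the quasi-projectivity of $Z$ with the projective-base hypothesis in Lemma \ref{lem_stable_max} via a compactification argument compatible with the moduli of stable pairs.
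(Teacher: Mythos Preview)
Your overall architecture matches the paper's: pass to the relative Iitaka fibration $c\colon X\to Y$, turn $(Y,B_Y,\bM.)$ into an honest stable family by trading $\bM.$ for an effective divisor via b-semi-ampleness, and then invoke Lemma~\ref{lem_stable_max}. The transfer back to $X$ via $c^*$ and the projection formula is also essentially what the paper does. The gap is precisely the step you flag as the ``main obstacle'': producing a \emph{single} $\Delta_Y=B_Y+H$ making $g\colon(Y,\Delta_Y)\to Z$ stable over all of $Z$. The Bertini-in-families argument you propose does not deliver this. The b-divisor $\bM.$ descends only on some higher model $Y'\to Y$, and for a general $H'\in|\bM Y'.|_{\mathbb Q}$ Bertini controls the singularities of $(\overline Y,\overline B_Y+H)$ only over the locus where $Y'\to Y$ is well-behaved; over special points $z\in Z$ the fibre $Y_z$ may sit inside (the image of) the exceptional locus, and the push-forward $H$ can acquire multiplicities that destroy slc-ness of $(\overline Y_z,\overline B_{Y,z}+H_z)$. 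In Proposition~\ref{semistable_iitaka} the choice of $H$ genuinely depends on $z$, and openness (\cite[Corollary 4.45]{Kollar21}) only propagates this to a neighbourhood of $z$, not to all of $Z$. There is no reason to expect the restriction maps $H^0(Y',k\bM Y'.)\to H^0(Y'_z,k\bM Y'_z.)$ to be surjective for every $z$, which is what a global Bertini would require.

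The paper does not attempt to globalise $\Delta_Y$. Instead it covers $Z$ by finitely many opens $U_1,\dots,U_k$ (Noetherianity) with divisors $D_i$ such that $(Y,B_Y+D_i)\to U_i$ is stable. For each $i$ it takes a generically finite cover $\overline Z\to Z$ on which the stable family over $U_i$ compactifies to a stable family $(\overline Y,\overline B+\overline D)\to\overline Z$, and then compares $\det h_*\mathcal O_{\overline Y}(m(K_{\overline Y/\overline Z}+\overline B+\overline D))$ with $\sigma^*\lambda_m$ via a negativity-lemma argument (following \cite[Theorem 11.40]{Kollar21}) to show they differ by an effective divisor supported on $\overline Z\setminus\overline U$. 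Lemma~\ref{lem_stable_max} applied to the compactified family then shows that the base locus of $\lambda_m$ (resp.\ of $M+\epsilon f^*\lambda_m$) misses $U_i$; taking $m$ divisible by all the $m_i$ gives global basepoint-freeness. So the missing idea in your proposal is this local-to-global base-locus comparison, which replaces the unavailable global choice of $\Delta_Y$.
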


\begin{proof}
Since $(X, \Delta) \rightarrow Z$ is locally stable, we know that every log canonical centre of $(X, \Delta)$ dominates
$Z$.
By applying \cite[Theorem 1.1]{HX11} to a dlt modification of $(X, \Delta)$, it follows
that $\K X/Z. + \Delta$ is relatively semi-ample over $Z$.

By Theorem \ref{thm_iitaka}, the coherent sheaf $f_* \mathcal{O}_X(m(\K X/Z. + \Delta))$ is a vector bundle and its formation commutes with base change.
In particular, it follows that the line bundle $\lambda_m\coloneqq \det f_* \mathcal{O}_X(m(\K X/Z. + \Delta))$ is compatible with base change.
Lastly, we recall that, given a line bundle $\mathcal{L}$ on a normal variety $U$ and a surjective projective morphism 
$\phi \colon V \rar U$ where $V$ is normal, then $\mathcal{L}$ is semi-ample if an only if so is $\phi^* \mathcal{L}$.
Thus, when needed, we are free to replace $Z$ with a surjective generically finite cover and all the relevant varieties and morphisms by base change (recall that the base change of a locally stable family is again a locally stable family).
In particular, we may assume that $Z$ is smooth.

Let $c \colon X \rar Y$ denote the relative Iitaka fibration of $(X,\Delta)$, and let $(Y,B_Y,\bM.)$ be the generalised pair induced on $Y$.
By Noetherianity and Proposition \ref{semistable_iitaka}, 
we may find finitely many effective divisors 
$D_1,\ldots,D_k \in |\bM Y.|_\QQ$ and open sets $U_i \subset Z$ such that 
$Z = \cup_{i= 1}^k U_i$ and $g \colon (Y,B_Y+D_i) \rar Z$ is a stable family of pairs over $U_i$.

Now, fix $i \in \{1, \dots, k\}$.
We will show that for integers $m_i \gg 0$ and $\ell>0$ and $\epsilon \in \QP$, 
the base locus of $\lambda_{\ell m_i}$ 
(resp. $\bM Y. +\epsilon f^*c_1(\lambda_{\ell m_i})$) is contained in $Z \setminus U_i$ 
(resp. $f^{-1}(Z \setminus U_i)$).
Supposing this, we see that by taking $m = m_1m_2\dots m_k$ the claims of the theorem hold.

For ease of notation set $D\coloneqq D_i$ and $U \coloneqq U_i$.

By \cite[Corollary 6.18]{KP17}, we may find a generically finite cover $\sigma\colon \overline{U} \rightarrow U$ such that $g \colon (Y,B_Y+D)\times_U\overline{U} \rar \overline{U}$ can 
be recompactified to a family of stable pairs over a normal projective
variety $\overline{Z} \supset \overline{U}$.
Call this compactified family $(\overline{Y}, \overline{B}_Y+\overline{D}) \rightarrow \overline{Z}$.
Perhaps replacing $\overline{Z}$ by a higher model, we may assume that $\sigma$ extends to 
a morphism $\sigma\colon \overline{Z} \rightarrow Z$.

Let $\tau\colon \overline{Z} \rightarrow \tilde{Z}$ be the Stein factorisation of $\sigma$, and set $\tilde{Y} \coloneqq Y\times_Z\tilde{Z}$.
By part (4) of Theorem~\ref{thm_iitaka}, $\tilde{Y}$ is normal.
Since $\psi \colon \tilde{Y} \to Y$ is finite, pullback is well defined also for divisors that are not necessarily $\mathbb Q$-Cartier.
We define $\tilde{B}$ via $\psi^*(K_Y+B_Y) = K_{\tilde{Y}}+\tilde{B}$ and we set $\tilde{D}=\psi^*D$.

Let $m_0$ be the integer guaranteed to exist by  Theorem~\ref{thm_iitaka}.
Up to replacing $m_0$
with a multiple, we may further assume that $f_* \scr O_X.(\ell m_0(\K X/Z.+\Delta))$ 
is a vector bundle that commutes with base change.
Note that $\det h \subs *. \scr O_{\overline{Y}}.(km_0(\K \overline Y /\overline Z. + \overline B  + \overline D ))\vert_{\overline{U}}$ 
coincides with $\sigma^*\det f_* \scr O_X.(km_0(\K X/Z.+\Delta))\vert_{U}$.

Now, we claim that 
we have an inclusion of coherent sheaves
\begin{equation} \label{inj_vb}
0 \rar h \subs *. \scr O_{\overline {Y}}.(km_0(\K \overline Y /\overline Z. + \overline B  + \overline D )) 
\rar \sigma^*f_* \scr O_X.(km_0(\K X/Z.+\Delta))
\end{equation}
that is an isomorphism over $\overline{U}$.
The latter claim is immediate by the previous observations.
Furthermore, as the two sheaves are torsion free, to conclude it suffices to show there is a non-zero morphism between the two.

Now, let $\hat{Y}$ be the normalisation of the closure of the graph of $\tilde{Y} \drar \overline{Y}$ in $\tilde{Y} \times_{\tilde Z} \overline{Y}$, and let $p \colon \hat{Y} \rar \tilde Y$ and $q \colon \hat{Y} \rar \overline{Y}$ be the corresponding morphisms.
As $\tilde Y$ and $\overline Y$ are normal and isomorphic over the generic point of $\tilde Z$, $p$ and $q$ are isomorphisms over the generic point of $\tilde Z$.
To conclude, it is then sufficient to show that
\begin{equation} \label{ineq_divisors}
q^*(\K \overline{Y}. + \overline{B} + \overline{D}) +\Theta= p^*(\K \tilde Y. + \tilde B +\tilde D),
\end{equation}
where $\Theta \geq 0$.
Note that, by construction, $p_*\Theta$ is contained in the preimage of $Z \setminus U$ in $\tilde Y$.

To this end, we follow the proof of \cite[Theorem 11.40]{Kollar21}.
Then, we may write
\[
p^*(\K \tilde Y. + \tilde B + \tilde D)= \K \hat{Y}. + \hat{B} + \hat{D} + A -B,
\]
where $\hat{B}$ and $\hat{D}$ denote the strict transforms of $\tilde B$ and $\tilde D$, respectively, 
and $A$ and $B$ are effective divisors with no common components.
Since $h \colon (\overline{Y}, \overline{B} + \overline{D}) \rar \overline{Z}$ is a stable family, we may write
\[
q^*(\K \overline{Y}. + \overline{B} + \overline{D})= \K \hat{Y}. + \hat{B} + \hat{D} -E,
\]
where $E$ is an effective divisor, cf. \cite[Proposition 2.15]{Kollar21}.
Then, it follows that $-(E+A-B)$ is $p$-nef and $p_{*}(E+A-B)=p_{*}E \geq 0$.
Thus, by the negativity lemma \cite[Lemma 3.39]{KM98}, we have $E+A-B \geq 0$.
Thus, in particular, $A-B \geq -E$, and \eqref{ineq_divisors} follows.

By taking determinants of \eqref{inj_vb}, we get a morphism
\begin{equation} \label{inj_lb}
\bar{\lambda}_{\ell m_0}\coloneqq \det h \subs *. \scr O_{\overline{Y}}.(\ell m_0(\K \overline Y /\overline Z. + \overline B  + \overline D )) 
\rar \sigma^*\det f_* \scr O_X.(\ell m_0(\K X/Z.+\Delta))
\end{equation}
that is an isomorphism over $\overline{U}$, and so
$\sigma^*c_1(\lambda_{\ell m_0}) = c_1(\bar{\lambda}_{\ell m_0})+G$
where $G \geq 0$ is supported on $\overline{Z} \setminus \overline{U}$.
Likewise, we see that 
$q^*(\overline{M}+\epsilon \bar{f}^*c_1(\bar{\lambda}_{\ell m_0}))+\Theta+\epsilon q^*\bar{f}^*G = p^*(\tilde M+\epsilon \tilde f^*c_1(\tilde{\lambda}_{\ell m_0}))$,
where $\tilde M$ and $\tilde \lambda _{\ell m_0}$ are the pull-backs of $M$ and $\lambda_{\ell m_0}$ to $\tilde X = X \times_{\tilde Z} Z$ and $\tilde Z$, respectively.

By Lemma \ref{lem_stable_max}, we know that $\bar{\lambda}_{\ell m_0}$ 
(resp. $(\overline{M}+\epsilon \bar{f}^*c_1(\bar{\lambda}_{\ell m_0}))$) is semi-ample, 
which from our above equalities implies that the base locus of $\tilde{\lambda}_{\ell m_0}$ 
(resp. $\tilde M+\epsilon \tilde f^* c_1(\tilde \lambda_{\ell m_0}))$) is contained in
$G$ (resp. $p_*(\Theta+\epsilon q^*\bar{f}^*G)$) as required.
As explained at the beginning of the proof, these statements are well behaved under base change, and thus we can descend the corresponding statements regarding
$\lambda _{\ell m_0}$ and $M + \epsilon f^* c_1(\lambda_{\ell m_0})$ to $Z$ and $X$, respectively.
This allows us to conclude.
\end{proof}

\begin{theorem} \label{main_thm_techincal}
\label{thm_*}
Assume that the 
b-semi-ampleness conjecture holds for LC trivial fibrations of relative dimension at most $n-1$.

Let $f\colon (X, B) \rightarrow Z$ be a GLC fibration with $B \geq 0$ and $\dim X= n$.
Assume $\K X. + B$ is $f$-nef and 
that a general fibre of $f$ is a good minimal model.

Then we may find a birationally equivalent GLC fibration 
$f'\colon (X', B') \rightarrow Z'$
fitting into the following commutative diagram
\[
\begin{tikzcd}
  (X', B') \arrow[r, "\alpha", dashrightarrow] \arrow[d, "f'"] & (X, B) \arrow[d, "f"] \\
  Z' \arrow[r, "\beta"] &  Z,
 \end{tikzcd}
\]
where $\alpha$ and $\beta$ are birational,
such that the following holds
\begin{enumerate}
\item $(X, B)$ and $(X', B')$ are crepant birational over the generic point of $Z$;

\item $(X'/Z', B')$ satisfies Property $(*)$, is BP stable and has maximal moduli;

\item $f\colon X' \rightarrow Z'$ is equidimensional;

\item the moduli part, $M'$, of $(X'/Z', B')$ is nef;

\item if, in addition, $(X, B^h) \to Z$ is a locally stable family, then $f'_*\mathcal O_{X'}(mM')$ is locally free and 
$\lambda_m'\coloneqq \det f'_*\mathcal O_{X'}(mM')$ is nef and semi-ample for $m \gg 0$ sufficiently divisible; and

\item if, in addition, $(X, B^h) \to Z$ is a locally stable family, then $M'+\epsilon f'^*c_1(\lambda_m')$ is semi-ample for any $\epsilon \in \QP$.
\end{enumerate}

Moreover, $\lambda_m'$ is compatible with base change in the following sense.
Consider another birationally equivalent GLC fibration $f''\colon (X'', B'') \rightarrow Z''$, 
where $\gamma\colon Z'' \rightarrow Z'$ is a birational morphism, such that $(X''/Z'', B'')$ satisfies Property $(*)$ and is BP stable,
$f''$ is equidimensional and such that the moduli part of $(X''/Z'', B'')$ is nef.
Then, we have $\lambda_m'' \cong \gamma^*\lambda_m'$.  
\end{theorem}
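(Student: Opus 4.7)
\medskip

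\noindent\textbf{Proof plan.} The plan is to build the model $(X'/Z',B')$ in two stages: first arranging the structural properties (Property $(*)$, BP stability, equidimensionality, maximal moduli, nefness of $M'$), and then deducing the analytic/semi-ampleness statements by passing to a Galois cover on which the family becomes locally stable, applying Theorem~\ref{thm_locally_stable_semi-ample}, and descending back.

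\medskip

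\noindent\emph{Step 1: Structural model.} Starting from $f\colon (X,B)\to Z$, I would first replace $(X,B)$ by a dlt modification and then invoke the Property $(*)$ construction from \cite{ACSS} to produce a birationally equivalent GLC fibration $f'\colon (X',B')\to Z'$ that is equidimensional, satisfies Property $(*)$, is BP stable, has maximal moduli, and is crepant birational to $(X,B)$ over the generic point of $Z$. Because a general fibre of $f'$ still admits a good minimal model, Lemma~\ref{lem_gen_fibre_min_model} gives a good minimal model of $(X',B')$ over $Z'$. Running a relative MMP and replacing $(X',B')$ by its relative log canonical model (again preserving Property~$(*)$ and BP stability by \cite{ACSS}), I may assume $K_{X'}+B'$, and hence the moduli part $M'=(K_{X'}+B')-f'^{*}(K_{Z'}+B'_{Z'})$, is $f'$-nef. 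Maximal moduli then upgrades $f'$-nefness of $M'$ to global nefness, yielding items (1)--(4).

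\medskip

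\noindent\emph{Step 2: Reduction to a locally stable cover.} With $(X'/Z',B')$ equidimensional and satisfying Property~$(*)$, I apply Proposition~\ref{prop_equi*_cover} to obtain a finite Galois cover $r\colon W\to Z'$ and a locally stable family $g\colon (Y,C)\to W$ with $s\colon Y\to X'$ such that $s^{*}M'\sim N$ (where $N$ is the moduli part of $(Y/W,C)$) and, for $m$ sufficiently divisible, $r^{*}f'_{*}\mathcal O(mM')\cong g_{*}\mathcal O(mN)$. Because a general fibre of $f'$ has a good minimal model, the same is true for $g$, so Theorem~\ref{thm_locally_stable_semi-ample}, which uses the hypothesised b-semi-ampleness conjecture, yields that $g_{*}\mathcal O(mN)$ is locally free, that $\mu_{m}\coloneqq\det g_{*}\mathcal O(mN)$ is semi-ample (hence nef), and that $N+\epsilon g^{*}\mu_{m}$ is semi-ample for every $\epsilon\in\QP$.

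\medskip

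\noindent\emph{Step 3: Descent and base change.} From $r^{*}f'_{*}\mathcal O(mM')\cong g_{*}\mathcal O(mN)$ I conclude that $f'_{*}\mathcal O(mM')$ is locally free and that $r^{*}\lambda'_{m}\cong\mu_{m}$; since $r$ is finite surjective, nefness and semi-ampleness of $\lambda'_{m}$ descend from those of $\mu_{m}$, giving (5). The identity $s^{*}(M'+\epsilon f'^{*}\lambda'_{m})\sim N+\epsilon g^{*}\mu_{m}$ combined with the fact that $s$ is finite surjective and that semi-ampleness descends along finite surjective maps between normal varieties gives item (6). Finally, the base change compatibility of $\lambda'_{m}$ under a second Property~$(*)$, BP stable, equidimensional model $f''\colon (X'',B'')\to Z''$ with $\gamma\colon Z''\to Z'$ birational follows from Lemma~\ref{lem_base_change_pushforward}: the hypotheses of that lemma are exactly (i)--(v) verified in Steps~1--2, and its conclusion gives $\lambda''_{m}\sim\gamma^{*}\lambda'_{m}$.

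\medskip

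\noindent\emph{Main obstacle.} The substantial work is concentrated in Step~1, namely ensuring that the ACSS Property~$(*)$/BP stable modification survives the subsequent relative MMP needed to make $M'$ nef while keeping equidimensionality, maximal moduli, and the crepant identification over the generic point of $Z$; the delicate point is that the maps $\alpha,\beta$ and the compatibility of pushforwards must be tracked through each step so that Lemma~\ref{lem_base_change_pushforward} is applicable at the end. The semi-ampleness assertions in Steps~2--3 are, by contrast, essentially formal consequences of Proposition~\ref{prop_equi*_cover} and Theorem~\ref{thm_locally_stable_semi-ample} once the structural model is in place.
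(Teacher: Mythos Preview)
Your proposal is correct and follows essentially the same route as the paper: build the structural model via \cite{ACSS}, pass to the locally stable Galois cover of Proposition~\ref{prop_equi*_cover}, apply Theorem~\ref{thm_locally_stable_semi-ample}, and descend; the base-change compatibility is Lemma~\ref{lem_base_change_pushforward}. The only difference is emphasis: what you flag as the ``main obstacle'' in Step~1 (preserving Property~$(*)$, BP stability, equidimensionality, and maximal moduli through a relative MMP) is not something you need to orchestrate yourself---\cite[Theorem~1.1]{ACSS} already outputs a model satisfying (1)--(4) simultaneously, with the nefness of $M'$ included, and Lemma~\ref{lem_gen_fibre_min_model} is used only to remove the termination hypothesis in that citation. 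One small point you should make explicit in Step~3: to apply Lemma~\ref{lem_base_change_pushforward} you need $\det f''_{*}\mathcal O(nM'')$ nef, which follows because $(X''/Z'',B'')$ itself satisfies the hypotheses of the theorem and hence (5) holds for it as well.
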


\begin{proof}
By \cite[Theorem 1.1]{ACSS} we may find a model $(X', B') \rightarrow Z'$ which satisfies (1)-(4).
By Lemma \ref{lem_gen_fibre_min_model} we see that that the assumption on termination of flips in 
\cite[Theorem 1.1]{ACSS} is unneeded.
Theorem \ref{thm_locally_stable_semi-ample} implies that (5) and (6) hold.

To prove our final claim, observe that $(X'', B'') \rightarrow Z''$ also satisfies
(1)-(6) and so our final claim is a consequence of Lemma \ref{lem_base_change_pushforward}.
\end{proof}

\begin{remark}
In the case where the moduli part is $f$-big, it is not necessary to assume the b-semi-ampleness conjecture.
\end{remark}

\begin{corollary}
Assume that the 
b-semi-ampleness conjecture holds for LC trivial fibrations of relative dimension at most $n-1$.

Let $f\colon (X, B) \rightarrow Z$ be an equidimensional 
GLC fibration with $B \geq 0$ and $\dim X= n$ which satisfies Property $(*)$.
Suppose that $K_X+B$ is $f$-nef and a general fibre of $f$ admits a good minimal model.

For $m \gg 0$ sufficiently divisible, set $\lambda_m \coloneqq \det f_*\mathcal O_X(mM)$.  
Then, there exist birational models $b\colon \tilde{Z} \rightarrow Z$ 
and $a\colon \tilde{X} \rightarrow X$ and a semi-ample line bundle $L$ on $\tilde{Z}$ 
such that $b_*c_1(L) = c_1(\lambda_m)$,
and, for any $\epsilon \in \QP$, there is a semi-ample $\mathbb{Q}$-Cartier divisor $N$ on $\tilde{X}$
such that $a_*N = M+\epsilon c_1(f^*\lambda_m)$.

\end{corollary}

\begin{proof}
Consider the model $(X'/Z', B')$ with moduli part $M'$ guaranteed to exist by Theorem \ref{thm_*}, 
and let $p\colon \tilde{X} \rightarrow X$ and $q\colon \tilde{X} \rightarrow X'$ resolve the 
rational map $\alpha\colon X' \dashrightarrow X$.
Similarly, let $\beta \colon Z' \rightarrow Z$ be the corresponding morphism as in Theorem \ref{thm_*}.

By \cite[Proposition 2.21]{ACSS}, we see that $p^*M \sim q^*M' + D$ for some effective divisor $D$.
By Theorem \ref{thm_*}, $(X'/Z',B')$ has maximal moduli (see \cite[Definition 2.20]{ACSS});
so $D=0$ holds.
It follows that we have $\beta_*f'_*\mathcal O_{X'}(mM')  = f_*\mathcal O_X(mM)$,
from which we may conclude that $\beta^*c_1(\lambda_m) = c_1(\lambda')+F$ where 
$\lambda' = \det f'_*\mathcal O_{X'}(mM')$ and $F$ is $\beta$-exceptional.  Since $\lambda'$ is nef
\cite[Lemma 3.39]{KM98}, we have $F \geq 0$.  Take $\tilde{Z}=Z'$ and $L = \lambda'$ to conclude.  
The claim for $M+\epsilon c_1( f^*\lambda_m)$ follows similarly.
\end{proof}


\section{$1$-dimensional bases}

\begin{proposition}
\label{prop_lambda_plus}
Let $f \colon (X,B) \rar Z$ be a locally stable family of good minimal models over a smooth curve $Z$
and let $M$ be the moduli part associated to $(X/Z, B)$.
Further assume  
that $\lambda_+>0$ and that a general fibre of $f$ is a klt pair.

Then
\begin{enumerate}
\item $M+\epsilon f^*c_1(\lambda_m)$ is semi-ample for $m\gg 0$ any $\epsilon > 0$
where $\lambda_m = \det f_*\mathcal O_X(mM)$; and

\item $\kappa(M) \geq \kappa(X/Z, B)+{\rm var}(X/Z, B)$.
\end{enumerate}
\end{proposition}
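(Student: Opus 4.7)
The overall strategy is to combine the Harder--Narasimhan filtration of $E_m$ on the curve $Z$ with the foliation--theoretic machinery of Lemma \ref{lem_flat_to_isotrivial} and Proposition \ref{prop_lift}, using the hypothesis $\lambda_+>0$ to force the presence of a positive slope quotient. Since $f$ is locally stable, Lemma \ref{lem_mod_part_stable} gives $M=K_{X/Z}+B$, and Theorem \ref{thm_iitaka} ensures that for all $m$ sufficiently divisible $E_m$ is locally free and its formation commutes with base change, and that the relative Iitaka fibration $c \colon (X,B)\to Y$ over $Z$ exists as a morphism. The canonical bundle formula produces a generalised pair $(Y/Z,B_Y,\bM.)$ with $M \sim_\QQ c^*(K_{Y/Z}+B_Y+\bM Y.)$, over which the moduli divisor is $g$-ample. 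Moreover, by Proposition \ref{prop_threshold_to_slope} the hypothesis $\lambda_+>0$ translates into $\mu_{\max}(E_m)>0$ for all $m\gg 0$.

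For item (1), I would analyse the Harder--Narasimhan filtration $0 \subset F_m \subset E_m$, with $F_m$ the maximal subbundle all of whose HN quotients have slope $0$. The hypothesis $\mu_{\max}(E_m)>0$ forces $E_m/F_m$ to be nonzero with strictly positive minimal slope, so that $\deg\lambda_m>0$ and $\lambda_m$ is ample on the curve $Z$. Applying Lemma \ref{lem_flat_to_isotrivial} and Proposition \ref{prop_lift} to the numerically flat part $F_m$ yields, after a finite \'etale cover of $Z$, a decomposition of the family in which the flat subfamily is birationally isotrivial. Combined with the relative semi-ampleness of $M$ (from the good minimal model assumption and \cite{HX11}) and the ampleness of $\lambda_m$ on $Z$, a Keel--type argument (compare the proof of Theorem \ref{thm_locally_stable_semi-ample}) delivers semi-ampleness of $M+\epsilon f^*\lambda_m$, since the ampleness of $\lambda_m$ eliminates any residual base locus inherited from the isotrivial part.

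For item (2), the bound on $\kappa(M)$ is obtained by adding separate contributions. The relative Iitaka fibration and the fibrewise Kodaira dimension contribute $\kappa(X/Z,B)$. When $\operatorname{var}(X/Z,B)=0$, no additional term is needed. When $\operatorname{var}(X/Z,B)\geq 1$, Proposition \ref{prop_lift} identifies the flat part $F_m$ with the maximal isotrivial direction, so a non-trivial positive slope quotient of $E_m$ must exist; the resulting $\deg\lambda_m>0$ provides, via $f^*\lambda_m$, an ample section direction on $Z$ transverse to the fibres, supplying the extra $+1$ in $\kappa(M)$.

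The main obstacle I anticipate is the non-split Harder--Narasimhan case: one must verify that the Simpson--style trivialisation of the flat part $F_m$ given by Lemma \ref{lem_flat_to_isotrivial} descends through the relative ample model $Y$ and is compatible with both the Iitaka fibration $c$ and the birational modifications used to realise $\bM Y.$, and that the semi-ampleness obtained on $Y$ lifts to $X$ through $c$. This compatibility is precisely what Proposition \ref{prop_lift} and Lemma \ref{lem_base_change_pushforward} are set up to provide, so the technical work lies in stringing these results together carefully.
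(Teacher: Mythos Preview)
Your route is far more elaborate than what is needed, and in places applies results outside their hypotheses. The paper's proof is completely elementary and does not touch the foliation machinery at all; that machinery is reserved for the complementary case $\lambda_+=0$ in Theorem \ref{thm_curve_base}.

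For item (1), once one knows that $\lambda_m$ is ample on the curve $Z$ and that $M=K_{X/Z}+B$ is nef and $f$-semi-ample, the conclusion is immediate: on the relative ample model $g\colon Y\to Z$ the divisor $K_{Y/Z}+B_Y+\bM Y.$ is $g$-ample and nef, so adding any positive multiple of $g^*\lambda_m$ makes it ample on $Y$, and $M+\epsilon f^*\lambda_m$ is the pull-back of this along $c$. No Harder--Narasimhan splitting, no Keel-type argument, and certainly no Lemma \ref{lem_flat_to_isotrivial} or Proposition \ref{prop_lift}. Your proposed application of those two results to ``the numerically flat part $F_m$'' is a real gap: both statements require as input that the \emph{entire} bundle $E_m=f_*\mathcal O(mM)$ be numerically flat for all $m\gg 0$; a flat HN-subbundle of $E_m$ has no geometric interpretation as a ``flat subfamily'' to which the foliation construction could be applied. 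Note also that Proposition \ref{prop_threshold_to_slope} as stated only gives $\frac{1}{m}\mu_{\max}(E_m)\le \lambda_+$, not the reverse inequality you invoke.

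For item (2), since $\dim Z=1$ one has ${\rm var}(X/Z,B)\le 1$, so it suffices to show $\kappa(M)=\dim Y=\kappa(X/Z,B)+1$, i.e.\ that $K_{Y/Z}+B_Y+\bM Y.$ is big. The paper does this directly from the definition of $\lambda_+$: choose $t>0$ with $l(K_{X/Z}+B)-tf^*P$ pseudo-effective, set $H=\tfrac{t}{l}P$, so $K_{Y/Z}+B_Y+\bM Y.-g^*H$ is pseudo-effective; since $K_{Y/Z}+B_Y+\bM Y.$ is $g$-ample, $\epsilon(K_{Y/Z}+B_Y+\bM Y.)+g^*H$ is ample for $0<\epsilon\ll 1$, and summing gives bigness of $(1+\epsilon)(K_{Y/Z}+B_Y+\bM Y.)$. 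Your case-split on ${\rm var}$ is unnecessary, and your use of Proposition \ref{prop_lift} to ``identify the flat part $F_m$ with the maximal isotrivial direction'' is, again, not something that proposition provides.
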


\begin{proof}
By Lemma~\ref{lem_mod_part_stable}, we have $M=K_{X/Z}+B$, which is $f$-nef by assumption.
By Proposition~\ref{prop_property*_open}, $(X/Z,B)$ satisfies Property $(*)$.
Furthermore, by \cite[Theorem 2.11]{ACSS}, it is also BP stable.
In turn, by \cite[Theorem 4.4]{ACSS}, $M$ is globally nef;
we observe that, by Lemma \ref{lem_gen_fibre_min_model}, the assumption on termination of flips in 
\cite[Theorem 4.4]{ACSS} is unneeded.

Let $(Y,B_Y, \mathbf{M})$ denote the relatively ample model of $(X, B)$ over $Z$, 
with morphisms $c \colon X \rar Y$ and $g \colon Y \rar Z$.
Then, we have that $\kappa(M_X)=\kappa(\K Y/Z. + B_Y + \mathbf{M}_Y)$, as we have $M_X=\K X/Z. + B$.

Since $(X,B) \to Z$ is a locally stable family and a general fibre is a klt pair,
then $(X,B)$ is klt.
In particular, by \cite[Theorem 0.2]{Ambro05}, we may choose $0 \leq D \sim_{\mathbb Q} \mathbf M _Y$
such that $(Y,B_Y+D)$ is a klt pair.
Then, for $m>0$ sufficiently divisible, by \cite[Proposition 6.4]{CoPa21} and the projection formula,
$E_m = f_*\mathcal{O}_X(mM)$ is a nef vector bundle.

Now, we claim that $\lambda_m = \det E_m$ is ample.
Since $E_m$ is nef, we have $c_1(\lambda_m) \geq 0$.
If $c_1(\lambda_m)=0$, then $E_m$ would be numerically flat.
On the other hand, $\lambda_+ > 0$ implies that $M-\delta f^*A$ is $\mathbb Q$-effective,
where $A$ is an ample divisor on $Z$ and $0 < \delta \ll 1$.
It follows that for $m>0$ sufficiently divisible $H^0(Z, E_m \otimes \mathcal O(-A)) \neq 0$ and so we have a non-zero morphism $\mathcal O(A) \to E_m$.  This implies that $0<\mu(\mathcal O(A)) \leq \mu_{{\rm max}}(E_m)$, in particular $E_m$ is not numerically flat, and so 
$\lambda_m$ is ample.

Item (1) is a direct consequence of the fact that $\lambda_m$ is ample
and $M=K_{X/Z}+B$ is nef and $f$-semi-ample.
Indeed, $M$ is the pull-back of a nef and $g$-ample $\mathbb{Q}$-divisor $L$.
Then, by \cite[Proposition 1.7.10]{Lazarsfeld04a}, $\eta L + \epsilon g^*c_1(\lambda_m)$ is ample for $0 < \eta \ll \epsilon$;
then, since $L$ is globally nef, we can add any positive multiple of $L$ to $\eta L + \epsilon g^*c_1(\lambda_m)$ and preserve ampleness.
Then, by pull-back to $X$, the claim follows.

By dimension reasons, item (2) is implied by the bigness of  $\K Y/Z. + B_Y + \mathbf{M}_Y$.
By construction, $\K Y/Z. + B_Y + \mathbf{M}_Y$ is ample over $Z$.
Furthermore, since $\lambda_+>0$, we may find an ample divisor $H$ on $Z$ such that 
$\K Y/Z. + B_Y + \mathbf{M}_Y - g^* H$ is pseudo-effective.
Now, since $H$ is ample and $\K Y/Z. + B_Y + \mathbf{M}_Y$ is $g$-ample, for $0 < \epsilon \ll 1$, 
we have that $\epsilon(\K Y/Z. + B_Y+\mathbf{M}_Y) + g^*H$ is ample.
And so we have we have that
\[
\epsilon(\K Y/Z. + B_Y + \mathbf{M}_Y)+g^*H + (\K Y/Z. + B_Y + \mathbf{M}_Y - g^* H) = 
(1+\epsilon)(\K Y/Z. + B_Y +\mathbf{M}_Y)
\]
is big as required.
\end{proof}

\begin{lemma}
\label{lemma_product}
Let $X$ be a normal projective variety,  let $Z$ be normal quasi-projective variety 
and let $Y$ be a normal variety which is projective 
over $Z$.
\begin{enumerate}
\item Suppose that we have a surjective contraction $\phi\colon X\times Z \rightarrow Y$ over $Z$.
Then there is a morphism $\varphi\colon X \rightarrow X'$ and an isomorphism $Y \cong X'\times Z$.

\item Let $D$ be a divisor on $Y=X\times Z$ and suppose that the $D$-flip (resp. $D$-flop) over $Z$, $Y^+$, exists.  
Then there exists a divisor $D_0$ on $X$ such that the $D_0$-flip (resp. $D_0$-flop), $X^+$, exists and
$Y^+ \cong X^+\times Z$.
\end{enumerate}

\end{lemma}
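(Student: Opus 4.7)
The plan for (1) is to fix a general closed point $z_0 \in Z$ (chosen so that $Y_{z_0}$ is normal and Grauert-type base change is available), set $X' := Y_{z_0}$, and let $\varphi := \phi|_{X \times \{z_0\}}\colon X \to X'$. I aim to realize $\phi$ as the composition
\[
X \times Z \xrightarrow{\varphi \times \mathrm{id}_Z} X' \times Z \xrightarrow{\sim} Y.
\]
The core step is to show that, for every $z \in Z$, the morphism $\varphi_z := \phi|_{X \times \{z\}}\colon X \to Y_z$ has the same set-theoretic fibers as $\varphi$. For this, any curve $C \subset X$ contracted by $\varphi$ satisfies $\phi_*[C \times \{z_0\}]=0$ in $N_1(Y)$; since $C \times \{z_0\}$ and $C \times \{z\}$ are numerically equivalent in $X \times Z$, the image $\phi(C \times \{z\}) \subset Y_z$ has zero numerical class in $Y$, and pairing with a divisor ample over $Z$ forces this image to be a point.

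It follows that every curve inside a fiber $F = \varphi^{-1}(x')$ of $\varphi$ is contracted by $\varphi_z$. Since $F$ is connected, $\varphi_z(F)$ must then be a point (otherwise a curve in $F$ could be chosen mapping dominantly onto a curve in $\varphi_z(F)$, contradicting the previous step), so $\varphi_z$ factors through $\varphi$ as a bijective morphism $X' \to Y_z$, which is an isomorphism by Zariski's main theorem. Packaging the fiberwise data, and using that $\varphi \times \mathrm{id}_Z$ is itself a contraction by flat base change along $Z$, one obtains a bijective projective morphism $X' \times Z \to Y$ over $Z$; another application of Zariski's main theorem (with $Y$ normal) upgrades this to an isomorphism.

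For (2), part (1) applied to the flipping/flopping contraction $\phi\colon Y \to V$ gives $V \cong X_0' \times Z$ with $\phi = \varphi \times \mathrm{id}_Z$ for a small contraction $\varphi\colon X \to X_0'$. Pick a general $z_0 \in Z$ avoiding the (codimension $\geq 2$) image in $Z$ of the indeterminacy locus of $Y \dashrightarrow Y^+$ and the supports of vertical components of $D$, and set $D_0 := D|_{X \times \{z_0\}}$. Define $X^+ := Y^+_{z_0}$; by the genericity of $z_0$ it is a normal projective variety related to $X$ by a small birational map over $X_0'$. Restricting the $V$-ample (resp. $V$-trivial) divisor $K_{Y^+}+D^+$ to $X^+$ and using $K_{Y^+}|_{X^+}=K_{X^+}$ by adjunction for a smooth fiber, one finds $K_{X^+}+D_0^+$ is ample (resp. $\mathbb Q$-trivial) over $X_0'$, so $X^+$ is the $D_0$-flip (resp. flop) of $X$ over $X_0'$. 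The product $X^+ \times Z$ is then a small birational modification of $Y$ over $V$ with $K + \widetilde D$ relatively ample (resp. trivial), verified fiberwise over $Z$; by uniqueness of flips (resp. flops) we conclude $Y^+ \cong X^+ \times Z$ over $V$. The main obstacle is the rigidity argument in (1), combining the numerical class calculation with the connectedness of fibers of $\varphi$ to force fiberwise constancy of $\varphi_z$; once (1) is established, (2) is essentially bookkeeping plus uniqueness.
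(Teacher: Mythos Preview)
Your approach is essentially the paper's: both rest on the observation that $C \times \{z\}$ and $C \times \{z_0\}$ are numerically equivalent in $X \times Z$, so $\phi$ contracts one if and only if it contracts the other. The paper phrases this as a cone identity $(\phi^*A_Y)^\perp \cap \overline{NE}(X \times Z/Z) \cong (\varphi^*A')^\perp \cap \overline{NE}(X)$ and finishes in one stroke with the rigidity lemma; you unpack the same fact fibrewise and finish with Zariski's Main Theorem. One gap in your write-up: you assert that the induced map $X' \to Y_z$ is \emph{bijective}, but your argument only shows that fibres of $\varphi$ map to points under $\varphi_z$, i.e.\ that $X' \to Y_z$ is well-defined and surjective. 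Injectivity requires the symmetric direction --- curves contracted by $\varphi_z$ are also contracted by $\varphi$ --- which follows from the same numerical-equivalence argument with the roles of $z$ and $z_0$ swapped, but you should state it. With that one sentence added, your global bijection $X' \times Z \to Y$ is justified and ZMT applies. For (2), the paper's route is slightly shorter: once (1) identifies the flipping base as a product, one has $D \equiv D_0 \times Z$ over that base, so the $D$-flip coincides with the $(D_0 \times Z)$-flip, which is visibly $X^+ \times Z$; your restriction-to-a-fibre-then-uniqueness argument reaches the same conclusion with a bit more bookkeeping.
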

\begin{proof}
To see (1) take $X' = \phi(X \times \{z\})$ for any $z \in Z$ and set $\varphi = \phi\vert_{X\times \{z\}}$. 
Let $A'$ be an ample divisor on $X'$ and
let $A_Y$ be an ample divisor on $Y$.
We observe that $(\phi^*A_Y)^\perp \cap \overline{NE}(X\times Z) 
= (\phi^*A_Y)^\perp \cap \overline{NE}(X\times Z/Z) \cong (\varphi^*A')^\perp \cap \overline{NE}(X)$, 
where the first equality
holds because $\phi$ is a morphism over $Z$.
It follows that the morphisms over $Z$ defined by $\phi^*A_Y$ and $\pi^* \circ \varphi^*A'$,
where $\pi$ denotes the projection $X \times Z \to Z$,
contract the same curves, and so by the rigidity lemma, 
\cite[Lemma 1.15]{MR1841091},
they are the same morphism.

To prove (2), let $W$ be the base of the $D$-flipping (resp. $D$-flopping) contraction.  
Arguing as in the previous case
if $D_0 = D \vert_{X\times \{z\}}$ for a general point $z \in Z$ then 
$D_0 \times Z \equiv D/W$.  It follows that the $D$-flip (resp. $D$-flop)
is isomorphic to the $D_0\times Z$-flip (resp. $D_0\times Z$-flop), which is easily seen to be $X^+\times Z$, as required. 
\end{proof}

\begin{lemma} \label{lemma_crepant_product}
Let $C$ be a smooth projective variety, and let $(X_0,\Delta_0)$ be a projective klt pair with $\Delta_0 \geq 0$ and such that 
$\K X_0. +\Delta_0$ is nef.
Let $(X,\Delta)$ denote the product $(X_0,\Delta_0) \times C$, and let $f \colon (X,\Delta) \rar C$ 
denote the induced morphism.
Let $f' \colon (X',\Delta') \rar C$ be a birational model of $(X,\Delta)$, where $\Delta' \geq 0$, $(X', \Delta')$ is klt and
$K_{X'}+\Delta'$ is nef.
Then, there exists a crepant model $(X'_0,\Delta'_0)$ of $(X_0,\Delta_0)$ such that $(X',\Delta') = (X'_0,\Delta'_0)\times C$.
\end{lemma}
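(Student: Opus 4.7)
The plan is to realise the birational map $\phi\colon X \dashrightarrow X'$ as a sequence of flops preserving the product structure, and then to apply Lemma \ref{lemma_product} inductively. Throughout, I interpret ``birational model of $(X,\Delta)$'' in the standard sense: $\phi$ is a birational map defined over $C$ which is crepant, i.e.\ $p^*(K_X+\Delta) = q^*(K_{X'}+\Delta')$ on any common resolution $p\colon W \to X$, $q\colon W \to X'$.

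\textbf{Step 1 (Factorisation into flops over $C$).} Since $(X,\Delta)$ and $(X',\Delta')$ are klt pairs with $K+\Delta$ nef, crepant birational over $C$, the relative analogue of Kawamata's theorem on minimal models (cf.\ Hacon--Xu in the klt setting) yields a factorisation of $\phi$ as a sequence of flops over $C$:
\[
(X,\Delta)=(X_1,\Delta_1) \dashrightarrow (X_2,\Delta_2) \dashrightarrow \cdots \dashrightarrow (X_k,\Delta_k)=(X',\Delta').
\]
Each arrow is a $D_i$-flop over a base $W_i \to C$ with flopping contraction $\psi_i\colon X_i \to W_i$, where $D_i$ is $\psi_i$-ample and $K_{X_i}+\Delta_i$ is numerically trivial over $W_i$.

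\textbf{Step 2 (Inductive product structure).} I claim by induction on $i$ that $(X_i,\Delta_i) = (X_0^{(i)},\Delta_0^{(i)}) \times C$ as pairs over $C$, with the morphism to $C$ being the second projection. The case $i=1$ holds by hypothesis. For the inductive step, $\psi_i\colon X_0^{(i)} \times C \to W_i$ is a surjective contraction over $C$, so by Lemma \ref{lemma_product}(1) we obtain $W_i \cong W_0^{(i)} \times C$ together with a small contraction $X_0^{(i)} \to W_0^{(i)}$ whose product with $\mathrm{id}_C$ is $\psi_i$. Then Lemma \ref{lemma_product}(2) applied to the $D_i$-flop produces $X_{i+1} \cong X_0^{(i+1)} \times C$, where $X_0^{(i+1)}$ is the corresponding flop of $X_0^{(i)}$ over $W_0^{(i)}$. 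As the flop is an isomorphism in codimension one, the strict transform of $\Delta_i = \Delta_0^{(i)} \times C$ on $X_{i+1}$ is $\Delta_0^{(i+1)} \times C$ with $\Delta_0^{(i+1)}$ the strict transform of $\Delta_0^{(i)}$ on $X_0^{(i+1)}$.

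\textbf{Step 3 (Conclusion).} Setting $(X'_0,\Delta'_0)=(X_0^{(k)},\Delta_0^{(k)})$, we obtain $(X',\Delta')=(X'_0,\Delta'_0) \times C$. Crepancy of $(X'_0,\Delta'_0)$ with $(X_0,\Delta_0)$ is inherited through each flop, and $K_{X'_0}+\Delta'_0$ is nef because, via the product decomposition $K_{X'}+\Delta' = p_1^*(K_{X'_0}+\Delta'_0)+p_2^*K_C$, it agrees with the restriction of the nef divisor $K_{X'}+\Delta'$ to a fibre $X'_0\times\{c\}$ of the second projection.

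\textbf{Main obstacle.} The principal technical input is Step 1: the relative analogue of Kawamata's flops theorem over $C$. Any minor $\mathbb{Q}$-factoriality issues can be arranged by passing to small $\mathbb{Q}$-factorialisations and observing that $A \times C$ is $\mathbb{Q}$-factorial iff $A$ is. Once Step 1 is in place, the remainder of the argument is a routine iteration of Lemma \ref{lemma_product}, with the key observation that any flopping contraction of a product over $C$ is itself a product (by Lemma \ref{lemma_product}(1)), so that the flop also decomposes as a product by Lemma \ref{lemma_product}(2).
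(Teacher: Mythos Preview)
Your approach is essentially the paper's: decompose the birational map into a sequence of flops over $C$ via Kawamata, and then push the product structure through the sequence using Lemma~\ref{lemma_product}. There is, however, one genuine gap in Step~1.

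Kawamata's theorem connects two $\mathbb{Q}$-factorial minimal models \emph{that are isomorphic in codimension one}. Crepant birational alone does not guarantee this: for instance, $X'$ could be obtained from $X$ by a crepant divisorial contraction (contracting a $(-2)$-curve on a surface fibre, say), and such a map is not a composition of flops. Your proposed fix, passing to small $\mathbb{Q}$-factorialisations, does not help here, since small $\mathbb{Q}$-factorialisations are themselves isomorphisms in codimension one and therefore cannot repair a map that already contracts or extracts a divisor.

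The paper resolves this by taking $\mathbb{Q}$-factorial \emph{terminalisations} rather than $\mathbb{Q}$-factorialisations. First terminalise $(X_0,\Delta_0)$, so that $(X,\Delta)$ is terminal and still a product; this makes $X \dashrightarrow X'$ a rational contraction. Then terminalise $(X',\Delta')$ to $(X'',\Delta'')$. Now $X \dashrightarrow X''$ is an isomorphism in codimension one between $\mathbb{Q}$-factorial terminal crepant pairs, Kawamata applies, and your inductive use of Lemma~\ref{lemma_product}(2) shows $X'' \cong X''_0 \times C$. A final application of Lemma~\ref{lemma_product}(1) to the crepant \emph{morphism} $X'' \to X'$ descends the product structure to $X'$. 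With this reduction inserted, your argument coincides with the paper's.
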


\begin{proof}
By taking a $\mathbb Q$-factorial 
terminalisation of $(X_0,\Delta_0)$,
which exists by \cite[Corollary 1.4.3]{BCHM06}, we may further assume 
that $(X,\Delta) \drar (X',\Delta')$ is a rational contraction and 
$X$ and $X_0$ are $\mathbb Q$-factorial.
Taking a $\mathbb Q$-factorial terminalisation $(X'',\Delta'') \rar (X',\Delta')$, 
which exists by \cite[Corollary 1.4.3]{BCHM06}, we may assume
$X \drar X''$ is an isomorphism in codimension 1.

By \cite{Kawamata07} $X \drar X''$ is a sequence of $(K_X+\Delta)$-flops.
By Lemma \ref{lemma_product} we see that if $X_0 \dashrightarrow X''_0$ is 
the induced birational contraction
on a general fibre, then in fact $X'' \cong X''_0\times C$.  
We conclude by another application of Lemma \ref{lemma_product} to $(X'', \Delta'') \rar (X', \Delta')$.
\end{proof}

\begin{definition}
\label{var_circ}
Given a GLC fibration $(X, B) \rightarrow Z$ with $B \geq 0$ and $\dim Z = 1$ 
we define ${\rm var}(X/Z, B)$ to be $0$ if there exists a pair $(X_0, B_0)$
and a generically finite morphism $Z' \rightarrow Z$ such that
$(X_0, B_0)\times Z'$ is birational to $(X, B)\times_ZZ'$.  Otherwise
we define ${\rm var}(X/Z, B)$ to be 1.
\end{definition}

\begin{remark}
\label{rem_variation}
\begin{enumerate}
\item Note that ${\rm var}(X/Z, B)$ is invariant by 
crepant birational transformations of $(X, B)$.

\item If $X$ has canonical singularities and $B = 0$ this agrees with the definition
of variation found in \cite{Kawamata85a}. 

\item A family of terminal minimal models with $B = 0$ and 
${\rm var} = 0$ is always (up to base change)
crepant birational to a product, and so by Lemma \ref{lemma_crepant_product} 
is in fact a product.

\end{enumerate}

\end{remark}

\begin{theorem}
\label{thm_curve_base}
Let $f\colon (X, B) \rightarrow Z$ be a GLC fibration between projective varieties 
with $B \geq 0$, $\dim X= n$ and $\dim Z = 1$.
Suppose that 
$(X, B)$ is klt over the generic point of $Z$ and the generic fibre of $f$ admits a good minimal model.

Then we may find a birationally equivalent GLC fibration 
$f'\colon (X', B') \rightarrow Z$
fitting into the following commutative diagram
\[
\begin{tikzcd}
  (X', B') \arrow[r, "\alpha", dashrightarrow] \arrow[d, "f'"] & (X, B) \arrow[d, "f"] \\
  Z \arrow[r, equal] &  Z.
 \end{tikzcd}
\]
such that $K_{X'}+B'$ is nef over $Z$ and 

\begin{enumerate}
\item $(X'/Z', B')$ is BP stable and has maximal moduli;
\item $\kappa(M') \geq \kappa(X'/Z, B')+{\rm var}(X'/Z, B')$; and 

\item if, in addition, $(X, B^h) \to Z$ is a locally stable family, then  $M'+\epsilon f'^*c_1(\lambda_m')$ is semi-ample for $m\gg 0$ and any $\epsilon \in \QP$
where $\lambda_m' = \det f'_*\mathcal O_{X'}(mM')$.

\end{enumerate}
\end{theorem}

\begin{remark}
We remark that $M'$ and $\lambda'_m$ are compatible with any further base change, as in 
the statement of Theorem \ref{thm_*}.
\end{remark}

\begin{proof}
Take $(X', B')$ to be the model of $(X, B)$ guaranteed by \cite[Theorem 1.1]{ACSS}.
By Lemma \ref{lem_gen_fibre_min_model} we see that that the assumption on termination of flips in 
\cite[Theorem 1.1]{ACSS} is unneeded.

Taking a base change along the finite morphism guaranteed by Proposition \ref{prop_equi*_cover}
does not alter $\kappa(M'), \kappa(X/Z, B), {\rm var}(X/Z, B)$ or (in the case $(X, B^h) \to Z$ is locally stable) the semi-ampleness of 
$M'+\epsilon f'^*c_1(\lambda_m')$, so we may freely replace
$(X', B')$ by this cover, and therefore may assume that $(X', B')$ is a locally stable family.
For ease of notation we replace $(X, B)$ by $(X', B')$.
By \cite[Theorem 1.1]{HX11}, 
we see that $K_{X}+B$ is semi-ample over $Z$.
Next, observe that since $(X, B)$ is locally stable and klt over the generic point of $Z$ that
$(X, B)$ is in fact klt, see Lemma \ref{lem_img_lc_centre_is_lc}.

Let $(\tilde{X}, \tilde{B}) \rightarrow (X, B)$ be a $\mathbb Q$-factorial terminalisation of $(X, B)$, 
which exists by \cite[Corollary 1.4.3]{BCHM06}.
By Proposition \ref{prop_kollar_criterion}, $(\tilde{X}, \tilde{B})$ is locally stable. 
So we may freely replace $(X, B)$ by 
$~(\tilde{X}, \tilde{B})$, and so we may assume 
that $X$ is $\QQ$-factorial with terminal singularities.

By Proposition \ref{prop_lambda_plus}, we may assume that $\lambda_+ = 0$.  By \cite[Proposition 4.1]{ACSS} 
we see that $K_{X/Z}+B$ is nef and so $\lambda_- \geq 0$.
For $m \gg 0$ Theorem \ref{thm_iitaka} implies that $E_m \coloneqq f_*\mathcal O_X(m(K_{X/Z}+B)$ is a vector bundle and by 
Proposition \ref{prop_threshold_to_slope} $\mu_{\min}(E) = \mu_{\max}(E) = 0$ and 
so $E_m$ is numerically flat.

Semi-ampleness of $M$ is a direct consequence of Lemma \ref{lem_flat_to_isotrivial}, 
and so item (3) is proven.

To prove item (2), let $Y$ be the relatively ample model of $(X, B)$ over $Z$ and let $(Y, B_Y, \mathbf{M})$
be the generalised pair given by the canonical bundle formula. 
By Lemma \ref{lem_flat_to_isotrivial} 
after replacing 
$Z$ by an \'etale cover we may assume $(Y, B_Y+\mathbf{M}_Y) = Z \times (Y_0, B_{Y_0}+\mathbf{M}_{Y_0})$.
As we are assuming that $\lambda_+ = 0$ it suffices to show that ${\rm var}(X/Z, B) = 0$ 
to conclude.
So we will show that after a generically finite base change, 
$(X, B) \rightarrow Z$
becomes isomorphic to a product $(X_0, B_0)\times Z$.  
We will argue by induction on $\dim (X/Z)$.
The case where $\dim (X/Z) = 0$ is obvious.

We proceed with a case by case analysis.

{\bf Case (1):} In this case we assume that ${\rm Supp}(B)$ is vertical over $Y$. 

First, we run a $K_X$-MMP with scaling relatively over $Y$.
By \cite[Theorem 1.1]{HX11} and the assumption that $B$ is vertical over $Y$, the 
MMP terminates with a good minimal model.
We denote by $X'$ the final model of this MMP and $Y'$ the relatively ample model, which is a birational model of $Y$.
Since $K_X+B\sim_{\QQ}0/Y$, $(X',B') \rar Z$ is still a family of good minimal models, 
where $B'$ denotes the push-forward of $B$ along the MMP.  Note that with respect to $(X', B') \rightarrow Z$ 
we still have $\lambda_+ = 0$, and so we may freely replace $(X, B)$ by $(X', B')$ and therefore may assume
in addition that $K_X \sim_{\mathbb Q} 0/Y$.

Now, if $B = 0$, we may conclude by \cite[Theorem 1.1]{Kawamata85a}, see also Remark \ref{rem_variation}.
Thus, we may assume that $B \neq 0$, and we wish to reduce to \cite{Kawamata85a}.
Now, consider a general ample divisor $H_0 \subset Y_0$, and let $H$ denote 
the divisor $H_0 \times Z \subset Y$.
Furthermore, we let $D$ denote the pull-back of $H$ to $X$.
For some $m \gg 0$ sufficiently large, and for general enough $H$,
we may find a cyclic Galois cover 
$\tilde{Y} \rightarrow Y$ and $\tilde{X} \rightarrow W$
of degree $=m$, which are ramified along $H$ and $D$, respectively.

Since $K_X \sim_{\QQ} 0 /Y$ and $H_0$ is sufficiently ample, 
$\tilde{X} \rar \tilde{Y}$ is the relative Iitaka fibration of $\tilde{X}$ over $Z$.
By construction, the generalised pair induced by $\tilde{X}$ on $\tilde{Y}$ is
$(\tilde Y,B_{\tilde{Y}},\tilde{\mathbf{M}})$, where $B_{\tilde{Y}}$ and $\tilde{\mathbf{M}}$ are the pull-backs
(the latter as b-divisor) of $B_Y$ and $\mathbf{M}$, respectively.
Then, for every closed point $z \in Z$, $(\tilde{Y}, B_{\tilde{Y}}+\tilde{Y}_z,\tilde{\mathbf{M}})$
is generalised log canonical, because so is $(\tilde Y, B_{ Y}+ Y _z,{\mathbf{M}})$.
Then, by inversion of adjunction for LC-trivial fibrations \cite[Theorem~3.1]{Ambro04},
we deduce that $\tilde{X} \rar Z$ is a locally stable family.
By construction, the variation of the relatively ample model of $\tilde{X}$ is 0, 
since both $(Y,B_Y+M_Y)$ and $H$ have no variation.
Then, by \cite[Theorem 1.1]{Kawamata85a}, it follows that $\tilde{X} \rar Z$ is isotrivial, and hence (up to a base change)
a product.
We observe that the requirements on the class of singularities to apply \cite[Theorem 1.1]{Kawamata85a} is actually satisfied.
Indeed, by our initial reductions, $X$ is terminal, while $H$ is a general member of a free linear system.
Thus, $H$ is itself canonical, and so is the pair $(X,H)$.
In turn, by \cite[Proposition~5.20]{KM98} and the Riemann--Hurwitz formula, the pair $(\tilde X, \tilde H)$ is canonical.
In particular, $\tilde X$ has canonical singularities.

Write $\tilde{X} = \tilde{X}_0 \times Z$, and $\tilde{Y} = \tilde{Y}_0 \times Z$.  
Let $g$ be a generator of the Galois action.
By construction, $g$ acts on $\tilde{Y}$ as $g' \times 1_Z$ for some $g' \in {\rm Aut}(\tilde{Y}_0)$.
Then, the action of $g$ on $\tilde{X}$ is a lift of this action;
in particular, by this compatibility, $g$ has to act on $\tilde{X}$ as $g'' \times 1_Z$ for some $g'' \in {\rm Aut}(\tilde{X}_0)$.
 It follows that we have an isomorpism 
 $\tilde{X}/\langle g \rangle \cong \tilde{X}_0/\langle g'' \rangle \times Z$, and so $X$ splits as a product.
 We then have $(X,B) \rar Z$ is a product, 
since we reduced to the case when $B$ is the pull-back of a divisor supported on $B_Y$.

{\bf Case (2):} In this case we assume that $B$ is big over $Y$.

Suppose for sake of contradiction
that ${\rm var}(X/Z, B) = 1$.

Let $(X', (1+\epsilon)B')$ 
denote the log canonical model of $(X, (1+\epsilon)B)$ over $Y$ for $0<\epsilon \ll 1$.  
Note that $(X', (1+\epsilon)B')$ 
is a stable family over some open subset $U \subset Z$.
Let $(X'', (1+\epsilon)B'')$ be a compactification of this stable family over $U$
to a stable family over $Z$ (perhaps after replacing $Z$ by a finite cover).
By \cite[Corollary 8.3]{KP17} and our assumption that ${\rm var}(X/Z, B) = 1$ 
we see that $K_{X''/Z}+(1+\epsilon)B''$ is big.

For general $y \in Y_0$, let $Z_y$ denote the fibre of $Y_0 \times Z \rightarrow Y_0$,
and let $(X_y'', (1+\epsilon)B''_y)$ denote the restricted family over $Z_y$.
We note that by \cite[Proposition 2.13]{Kollar21} $(X''_y, (1+\epsilon)B''_y)$ 
is locally stable over $Z_y$.  
By
Proposition \ref{prop_lift} we have that $(X''_y, (1+\epsilon)B''_y) 
\rightarrow Z_y$ is isotrivial
over $U$, hence isotrivial.
This implies that 
$K_{X''_y}+(1+\epsilon)B''_y$ is not big.
Finally, observe that $(K_{X''/Z}+(1+\epsilon)B'')\vert_{X''_y} \sim K_{X''_y/Z_y}+(1+\epsilon)B''_y$,
which contradicts the bigness of $K_{X''/Z}+(1+\epsilon)B''$, 
giving our required contradiction.

The map from $Z$ to the moduli space parametrising the fibres
of $(X'', (1+\epsilon)B'') \rightarrow Z$ is therefore constant, and so
$(X'', (1+\epsilon)B'')$ splits as a product after a generically finite base change, 
hence by Lemma \ref{lemma_crepant_product} $(X, B)$
itself splits as a product.

{\bf Case (3):} In this case we assume that $K_X$ is not pseudo-effective over $Y$ 
and we reduce to Case (2).

We run a $K_X$-MMP with scaling of any ample divisor over $Y$.
Since $K_X$ is not pseudo-effective, this MMP terminates with a Mori fibre space 
$X' \rar W$ over $Y$.
In particular, we have $\dim W < \dim X$, and hence $X' \rar W$ is not birational.
Since $K_X+B \sim_{\QQ}0/Y$, the final outcome $(X',B') \rar Z$ is still a locally stable 
family of good minimal models, and it is crepant birational to $(X,B)$.
Thus, up to replacing $(X,B)$ with $(X',B')$, we may assume that $X \rar W$ is a morphism.
If $\dim W = \dim Y$, then $W \rar Y$ is birational and $B$ is big over $Y$ 
and so we may reduce to Case (2).
Therefore, we may assume that $\dim Y < \dim W < \dim X$.

Let $(W,B_W,\mathbf{M}_W)$ denote the generalised pair induced by the canonical bundle formula 
for $(X,B) \rar W$.
By \cite{Ambro05}, we may choose $0 \leq \Gamma \sim_\QQ \mathbf{M}_W$ such that $(W,B_W+\Gamma)$ is klt.
It follows that $(W,B_W+\Gamma) \rar Z$ is generically a locally stable family of good minimal models, and so, 
up to base change of $Z$, we may assume that this family admits a compactification 
$(W',B_{W'}+\Gamma') \rar Z$ that is everywhere locally stable and that agrees with 
$(W,B_W+\Gamma)\rar Z$ over a non-empty open subset of $Z$.
Let $\tilde W$ denote the normalisation of the graph of $W \rar W'$, and let 
$p \colon \tilde W \rar W$ and $q \colon \tilde W \rar W'$ denote the corresponding morphisms.
Following the proof of \cite[Theorem 11.40]{Kollar21}, it follows that
\begin{equation}\label{ineq_discrep}
p^*(K_W+B_W+\Gamma) \geq q^*(K_{W'}+B_{W'}+\Gamma').
\end{equation}
Since $(W,B_W+\Gamma)$ is crepant to $(X,B)$, it follows that $\lambda_-=\lambda_+=0$ for 
$(W,B_W+\Gamma) \rar Z$.
Then, by \eqref{ineq_discrep} and the fact that $(W',B_{W'}+\Gamma')\rar Z$ is a locally stable 
family of good minimal models, it follows that $\lambda_-=\lambda_+=0$ for $(W',B_{W'}+\Gamma')\rar Z$.
By our induction hypothesis, up to a base change, $(W',B_{W'}+\Gamma')\rar Z$ splits as a product.

Since $(Y,B_Y+\mathbf{M}_Y)$ and $(W',B_{W'}+\Gamma')$ are products, and 
since for a general $z$ we have a morphism
$g_z\colon W'_z \rightarrow Y_z$ with 
$g_z^*(K_{Y, z}+B_{Y, z}+\mathbf{M}_{Y, z}) \sim_{\mathbb Q} K_{W', z}+B_{W', z}+\Gamma'_z$
it follows that there exists a morphism $g\colon W' \rightarrow Y$ such that
$g^*(K_Y+B_Y+\mathbf{M}_Y) \sim_{\mathbb Q} K_{W'}+B_{W'}+\Gamma'$.
By construction we have a morphism $h\colon W \rightarrow Y$ such that 
$K_{W}+B_W+\Gamma \sim_{\mathbb Q} h^*(K_Y+B_Y+\mathbf{M}_Y)$
and so 
$(W,B_W+\Gamma)$, 
and $(W',B_{W'}+\Gamma')$ are crepant birational to each other.
By Lemma \ref{lemma_crepant_product} it follows that $(W, B_W+\Gamma)$ is a product.

We may then argue as in Case (2), applied to the morphism $(X, B) \rightarrow W$ 
to deduce that $(X, B)$ is itself a product.
\end{proof}

\bibliography{math.bib}
\bibliographystyle{alpha}

\end{document}